\documentclass[12pt,reqno]{amsart}
\usepackage{latexsym,amsmath,amssymb, amsthm, mathscinet}
\usepackage{cases, verbatim}
\usepackage[dvipdfmx]{graphicx}
\usepackage{bmpsize}

\usepackage{amsfonts,amsmath,amsthm, amssymb}
\usepackage{cases}
\usepackage[usenames]{color}
\usepackage{enumerate}
\usepackage{bm}
\usepackage{graphicx}

\theoremstyle{plain}
\newtheorem{thm}{Theorem}[section]

\newtheorem{defn}[thm]{Definition}

\newtheorem{ex}[thm]{Example}
\newtheorem{lem}[thm]{Lemma}
\newtheorem{cor}[thm]{Corollary}
\newtheorem{prop}[thm]{Proposition}
\theoremstyle{remark}

\numberwithin{equation}{section}

\newcommand{\dist}{{\rm dist}\,}

\begin{document}
\title[Static Class-Guided Selection of Elementary Solutions]{Static Class-Guided Selection of Elementary Solutions in Non-Monotone Vanishing Discount Problems}

\author{Panrui Ni}
\address[P. Ni]{
Department 1: Graduate School of Mathematical Sciences, University of Tokyo, 3-8-1 Komaba, Meguro-ku, Tokyo 153-8914, Japan; Department 2: Shanghai Center for Mathematical Sciences, Fudan University, Shanghai 200438, China}
\email{panruini@gmail.com}

\author{Jun Yan}
\address[J. Yan]{
	School of Mathematical Sciences, Fudan University, Shanghai 200433, China}
\email{yanjun@fudan.edu.cn}

\author{Maxime Zavidovique}
\address[M. Zavidovique]{
	Sorbonne Universit\'e, Universit\'e de Paris Cit\'e, CNRS, \ Institut de Math\'ematiques de Jussieu-Paris Rive Gauche, Paris 75005, France}
\email{mzavidovi@imj-prg.fr}


\makeatletter
\@namedef{subjclassname@2020}{\textup{2020} Mathematics Subject Classification}
\makeatother

\date{\today}
\keywords{Discounted Hamilton--Jacobi equations; Convergence; Weak KAM theory; Mather measures; Selection problem}

\subjclass[2020]{
    35F21, 
    37J51, 
    49L25, 
    35B40  
}

\begin{abstract}
We study a generalized vanishing discount problem for Hamilton--Jacobi equations, removing the standard monotonicity assumption, either in a global sense or when integrated against all Mather measures. Specifically, we consider
\[
\lambda a(x)u(x)+H(x,Du(x))-A\lambda=c_0,
\]
with a suitably chosen constant $A>0$. By appropriately changing the signs of the function $a(x)$ on different static classes associated with $H$, we show that the maximal viscosity solution converges uniformly as $\lambda\to 0^+$ and that all elementary solutions of the stationary equation
\[
H(x,Du(x))=c_0
\]
can be selected as limits. This provides the first result for selecting multiple viscosity solutions in vanishing discount problems beyond the usual monotonicity and integral assumptions, as long as $a(x)$ is positive on one static class. Our results highlight the crucial role of static classes in controlling the asymptotic behavior of viscosity solutions. Previously, under usual monotonicity assumptions, only a single solution could be selected (as discussed in \cite{GL}), whereas our approach allows controlled selection of multiple solutions via static class-guided discount coefficients.
\end{abstract}

\date{\today}

\maketitle



\section{Introduction}

In this paper, we investigate whether all elementary solutions of
\begin{equation}\label{E0}\tag{HJ$_0$}
  H(x,Du(x))=c_0\quad \textrm{in}\quad M
\end{equation}
can be obtained as the limit of the maximal viscosity solution of
\begin{equation}\label{E}\tag{E$_\lambda$}
  \lambda a(x)u(x)+H(x,Du(x))-A\lambda=c_0 \quad \textrm{in}\quad M
\end{equation}
as $\lambda\to 0^+$. Here $M$ is a closed Riemannian manifold, $H\in C^2(T^*M)$ is strictly convex and superlinear in the gradient variable, $a(x)$ is continuous, and $c_0$ is the critical value associated with $H$. The constant $A>0$ is chosen sufficiently large so that \eqref{E} admits viscosity solutions. Equivalently, \eqref{E} can be written as
\[G_\lambda(x,Du(x),u(x))=c_0\quad \textrm{in}\quad M, \quad G_\lambda(x,p,u)=\lambda(a(x)u-A)+H(x,p),\]
where $G_\lambda$ converges uniformly to $H(x,p)$ on compact subsets of $T^*M\times\mathbb{R}$ as $\lambda\to 0^+$; see \cite{V1} for related discussion. Throughout the paper, solutions, subsolutions, and supersolutions are meant in the viscosity sense and assumed continuous. Relevant definitions are recalled in Section \ref{pre}.

To obtain the selection result, we introduce a dynamical concept called \emph{static classes}, a natural partition of the Aubry set into disjoint compact invariant subsets following Ma\~n\'e. The Mather set, being contained in the Aubry set, inherits a corresponding partition by static classes. Although the Aubry set may be strictly larger than the Mather set, each static class contains at least one point in the Mather set, or equivalently, supports at least one ergodic minimizing measure. As a consequence, the number of static classes in the Aubry set coincides with that in the Mather set. As shown in \cite{Ber}, there is a one-to-one correspondence between static classes and elementary solutions of \eqref{E0}. In this paper, we assume finitely many static classes, more than one, which holds generically for $H$. We prove that by setting $a(x)>0$ on one static class $M_{i_0}$, and $a(x)<0$ on all other static classes, the maximal viscosity solution of \eqref{E} converges to the elementary solution associated with $M_{i_0}$ as $\lambda\to 0^+$. Consequently, by varying the sign pattern of $a(x)$ on the static classes, all elementary solutions of \eqref{E0} can be selected.

Recently, \cite{QC} studied the selection problem for \eqref{E0} through a two-step perturbation involving a vanishing discount term and a small potential perturbation. This approach was the first to select different solutions of \eqref{E0}, where the potential term plays a key role in removing unwanted minimizing measures. In the present work, by carefully choosing the signs of the discount coefficient $a(x)$ across different static classes, one can select different elementary solutions relying solely on the vanishing discount term. This result emphasizes a purely dynamical mechanism driving the selection process and offers an intrinsic perspective on the problem.

\subsection*{History of the problem}

The existence of solutions to the stationary equation
\begin{equation}\label{hjc}
H(x,Du(x))=c \quad \textrm{in}\quad \mathbb T^d
\end{equation}
is known as the \emph{cell problem}, which is a central issue in the theory of Hamilton--Jacobi equations. 
This problem was solved via the so-called ergodic (or discounted) approximation in \cite{hom}. 
Let $\lambda>0$ and let $u_\lambda$ be the unique solution of
\[
\lambda u(x)+H(x,Du(x))=0 \quad \textrm{in}\quad \mathbb T^d.
\]
It was shown in \cite{hom} that there exists a sequence $\lambda_k\to0^+$ such that $-\lambda_k u_{\lambda_k}$ converges uniformly to a constant $c_0$, and $u_{\lambda_k}-\min_{\mathbb T^d}u_{\lambda_k}$ converges uniformly to a solution of \eqref{hjc} with $c=c_0$. 
Moreover, $c_0$ is the unique constant for which \eqref{hjc} admits solutions and is called the \emph{critical value}.

At that time, it was unclear whether different sequences $\lambda\to0^+$ would lead to the same limit. 
This question was first studied under restrictive assumptions in \cite{G,IM}. 
A complete positive answer was later obtained in \cite{Da1}, where the uniform convergence of the unique solution $u_\lambda$ of
\begin{equation}\label{lde}\tag{${\textrm{HJ}}_\lambda$}
\lambda u(x)+H(x,Du(x))=c_0 \quad \textrm{in}\quad M
\end{equation}
as $\lambda\to0^+$ was established. 
Here $M$ is a closed connected manifold and $H$ is continuous, coercive and convex in the momentum variable. 
This type of problem is referred to as the \emph{vanishing discount problem}. 
The convexity of $H$ plays a crucial role, and convergence may fail without this assumption; see \cite{Z2}.

The asymptotic convergence result has since been extended to a variety of settings, including second-order equations \cite{IMT,IMT2,MT,Zh}, discrete models \cite{Da1,n1,Su,Zbook}, mean field games \cite{CP,IMWX}, weakly coupled Hamilton--Jacobi systems \cite{Da6,Da5,Ish4,Ish5}, and non-compact manifolds \cite{Da7,Ish6}. 
For nonlinear generalizations, known as the vanishing contact structure problem, we refer to \cite{V3,V1,GMT,V2}. 
Other extensions of the vanishing discount problem have been investigated in \cite{TZ,WZ} and the references therein.

As a degenerate but still monotone case, \cite{Z} studied the convergence of solutions to
\begin{equation}\label{gendis}\tag{$\overline{\textrm{HJ}}_\lambda$}
\lambda a(x)u(x)+H(x,Du(x))=c_0 \quad \textrm{in}\quad M,
\end{equation}
where $a(x)\geqslant 0$ on $M$ and $a(x)>0$ on the projected Aubry set of $H$. 
This equation is closely related to the present work and is also connected to optimization problems in economics; see \cite{factor}. 
Later, the degenerate vanishing contact structure problem was investigated in \cite{CFZZ}, where it was shown that Mather measures play a central role in the convergence analysis.

It is worth pointing out that all the works mentioned above rely on a non-decreasing (monotonicity) assumption with respect to the unknown function $u$. 
Once this assumption is violated, many fundamental tools break down: solutions may fail to exist, comparison principles no longer hold, and uniqueness is generally lost. 
In \cite{Da3,V4}, the convergence of minimal solutions of \eqref{lde} as $\lambda\to0^-$ was studied under restrictive hypotheses. 
For genuinely non-monotone vanishing discount problems, the first example exhibiting both convergent and divergent families of solutions was provided in \cite{n3}, revealing phenomena absent from the monotone theory. 
Subsequently, the generalized vanishing discount problem without the non-decreasing assumption was further developed in \cite{DNYZ}.

\subsection*{Motivation of this paper}

Previous works mainly focus on the convergence of discounted solutions 
\footnote{A \emph{discounted solution} refers to a viscosity solution of the generalized discounted equation \eqref{gendis}, while a \emph{critical solution} refers to a viscosity solution of \eqref{E0}.}. 
In contrast, the present paper addresses a more refined problem: how to select \emph{different elementary critical solutions} via limits of discounted solutions. 
As observed in \cite{DNYZ}, there exists a certain critical solution that is the only possible limit for all bounded discounted solutions, so previous works could only select this particular solution. 

In \cite{GL}, a selection principle based on the Freidlin--Wentzell large deviation principle was proposed, yielding a critical solution different from the one obtained via the vanishing discount process. 
For instance, as in \cite[Section~5.3]{GL}, we consider
\begin{equation}\label{U'}
  \lambda u(x)+u'(x)(u'(x)-U'(x))=0 \quad \textrm{in}\quad \mathbb S^1\simeq [0,1),
\end{equation}
where $U:\mathbb S^1\to\mathbb R$ is smooth. The unique discounted solution is $u_\lambda\equiv 0$, so the selected critical solution $u_0$ is trivial. 
In this paper, we show that the \emph{generalized vanishing discount process} can be used to select \emph{all elementary critical solutions}, including nontrivial ones for \eqref{e12} (see Example \ref{ex1}). 
This represents a natural development of the vanishing discount problem, highlighting for the first time how the dynamical structure of static classes within the Mather sets governs the asymptotic behavior of discounted solutions.

Recall that static classes are in one-to-one correspondence with elementary solutions of \eqref{E0}, which play a fundamental role in the description of critical solutions. This makes it natural to consider a function $a(x)$ in \eqref{gendis} whose sign varies across different static classes: by changing the signs of $a(x)$, one can select different elementary solutions through the generalized vanishing discount process.

According to \cite{DNYZ,n3}, one cannot in general expect all discounted solutions of \eqref{E} to converge as $\lambda$ vanishes. 
This motivates us to consider the convergence of the \emph{maximal discounted solution} of \eqref{E}. 
A second key novelty of this paper is that we establish convergence even when the discount coefficient $a(x)$ is allowed to take negative values outside a chosen static class, so that its integral may be negative for some Mather measures. 
This introduces essential technical challenges, as previous works \cite{CFZZ,DNYZ} rely on the assumption that $a(x)$ is strictly positive when integrated against all Mather measures. Removing this monotonicity condition imposed on all Mather measures gives rise to genuinely new selection phenomena.

Finally, a key ingredient in our analysis is a large-time behavior result (Lemma \ref{stab}), building on earlier work \cite{n2}. To our knowledge, this reveals for the first time a connection between the vanishing discount problem and the large-time behavior.

\subsection*{Statement of the main result}

Let $M$ be a closed connected smooth manifold. Denote by $TM$ and $T^*M$ the tangent and cotangent bundles of $M$, with points $(x,v)\in TM$ and $(x,p)\in T^*M$. Let $\|\cdot\|_x$ be the norm induced by the Riemannian metric on $T_xM$ and $T^*_xM$, and let $\pi:TM\to M$ be the canonical projection. Assume that $H:T^*M\to\mathbb R$ is $C^2$ and satisfies the Tonelli conditions:
\begin{itemize}
\item [(H1)] \textbf{Strict convexity:} $\partial^2 H/\partial p^2(x,p)$ is positive definite for all $(x,p)\in T^*M$.
\item [(H2)] \textbf{Superlinearity:} $\lim_{\|p\|_x\to+\infty} H(x,p)/\|p\|_x = +\infty$ uniformly in $x$.
\end{itemize}

The associated Lagrangian $L:TM\to\mathbb R$ is
\[
L(x,v)=\sup_{p\in T^*_xM} (p\cdot v - H(x,p)),
\]
which is $C^2$ and satisfies
\begin{itemize}
\item [(L1)] \textbf{Strict convexity:} $\partial^2 L/\partial v^2(x,v)$ is positive definite for all $(x,v)\in TM$.
\item [(L2)] \textbf{Superlinearity:} $\lim_{\|v\|_x\to+\infty} L(x,v)/\|v\|_x = +\infty$ uniformly in $x$.
\end{itemize}
Since ``static classes'' is a dynamical concept, we assume the Hamiltonian to be smooth. 
The authors believe that the results in the present paper can be generalized to the case where the Hamiltonian is merely continuous. 
This condition is standard from the PDE point of view; for related tools, one can refer to \cite{gen,DZ10,FS}. 
However, treating this more general case would introduce technical complications that could obscure the main ideas of this work. 
We therefore leave this generalization for future study. 
Moreover, understanding the concept of ``static classes'' and the selection result from a purely PDE perspective remains both unclear and interesting.

Let $h^\infty : M \times M \to \mathbb{R}$ be the Peierls barrier defined in \eqref{barr}. 
The following pseudo-distance was first introduced in \cite{Ma3}; see also \cite{global} for a detailed discussion:
\[
d_H(x,y) := h^\infty(x,y) + h^\infty(y,x).
\]
Let $\mathcal A$ be the projected Aubry set defined in \eqref{pA}. 
We define an equivalence relation on $\mathcal A$ by declaring $x \sim y$ if $d_H(x,y)=0$. 
The equivalence classes are called the \emph{static classes}. 
Let $\mathcal M$ be the projected Mather set defined in Proposition \ref{Mather}. 
Since $\mathcal M \subseteq \mathcal A$, it inherits the partition into static classes. 
Assume:

\begin{itemize}
\item [($\diamond$)] The number of static classes is finite and greater than one.
\end{itemize}
Since each static class supports at least one ergodic minimizing measure and the Mather set is the union of the supports of all minimizing measures, the number of static classes in $\mathcal M$ coincides with that in $\mathcal A$. 
The assumption $(\diamond)$ is common in the study of dynamics; see \cite{Ber,CP2}. 
Generically, according to \cite{BC}, one has finitely many static classes. 
The requirement that the number of static classes be greater than one is imposed to ensure that the selection result is nontrivial; otherwise, there exists only one elementary solution of \eqref{E0}. 

A special case of assumption $(\diamond)$ is provided by symmetric Hamiltonians satisfying
$H(x,p)=H(x,-p)$. 
In this case, it is known that
\[
\mathcal A=\mathcal M=\Big\{x\in M:\ H(x,0)=\max_{y\in M} H(y,0)\Big\},
\]
see \cite{Da3,F}. 
Consequently, if $H(x,0)$ admits only finitely many maximizers, then the Aubry set (and hence the Mather set) consists of finitely many points, each forming a distinct static class. 
Therefore, assumption $(\diamond)$ is satisfied in this setting.

The case where there are infinitely many static classes is still unclear, and we leave this problem for future study. 
In fact, this issue is closely related to the total disconnectedness
\footnote{That is, each connected component of the set consists of a single point.}
of the quotient Aubry set induced by the equivalence relation defined above; see, for instance, \cite{FFR,Ma1,So}. 
In general, however, this property does not hold; cf. \cite{BIK,Ma4}.

Label the static classes in $\mathcal M$ as $M_1,\dots,M_k$, with $1<k<\infty$, so that
\[
\mathcal M=\bigcup_{i=1}^k M_i, \quad M_i\cap M_j=\emptyset\ (i\neq j).
\]
The elementary solution of \eqref{E0} associated with $M_i$ is, up to a constant, $h^\infty(x_i,x)$ for any $x_i\in M_i$; see Corollary \ref{h-h=c} below.

\medskip

The main result of this paper is as follows.

\begin{thm}\label{thm1}
Assume \emph{(H1)}, \emph{(H2)}, and \emph{($\diamond$)}.
Let $v_0$ be a viscosity solution of \eqref{E0} and take
$A > \|a\|_\infty \|v_0\|_\infty$ in \eqref{E}. Then \eqref{E} admits a maximal solution $u_\lambda$. Fix $i_0 \in \{1,\dots,k\}$ and assume
\begin{itemize}
\item[(a)] $a(x)>0$ on $M_{i_0}$ and $a(x)<0$ on $\mathcal M \setminus M_{i_0}$.
\end{itemize}
Then the maximal solution $u_\lambda$ converges uniformly as $\lambda \to 0^+$ to
\[h^\infty(x_0,x) + C,\]
where $x_0 \in M_{i_0}$ is arbitrary and the constant $C$ is given by
\[C=\inf_{\mu}\frac{\int_M a(y)\, h^\infty(y,x_0)\, d\mu(y) + A}{\int_M a(y)\, d\mu(y)},\]
with the infimum over all projected Mather measures supported in $M_{i_0}$.
\end{thm}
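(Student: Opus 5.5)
Existence of the maximal solution comes first. Since $v_0$ solves \eqref{E0} and $A>\|a\|_\infty\|v_0\|_\infty$, the function $v_0$ is a strict subsolution of \eqref{E}: indeed $\lambda(a v_0-A)<0$. We also need a supersolution bound from above. For that I would take a large constant $K$ together with a critical subsolution and use the structure on the Aubry set. The cleaner route is to show that every subsolution $w$ of \eqref{E} is bounded above uniformly in $\lambda$.

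On $M_{i_0}$, integrate the subsolution inequality against a Mather measure $\mu$ supported in $M_{i_0}$. The standard inequality $\int (H(x,Dw)-c_0)\,d\mu\geq 0$ along calibrated curves (via the Lagrangian representation) gives $\int (a w - A)\,d\mu\le 0$. Because $a>0$ on $M_{i_0}$ and $w$ has a uniform Lipschitz bound from superlinearity, this yields a uniform upper bound on $w$ near $M_{i_0}$, hence globally. Perron's method between $v_0$ and this bound then produces the maximal solution $u_\lambda$ as the supremum of subsolutions. By coercivity the family $u_\lambda$ is equi-Lipschitz. It is bounded below by $v_0$ and above by the previous argument, so Arzel\`a--Ascoli gives subsequential limits, and by stability any limit $u$ solves \eqref{E0}.

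The second step identifies the limit. For the upper bound, the same Mather measure integration, divided by $\lambda$, gives for every $\mu$ supported in $M_{i_0}$ the inequality $\int a u_\lambda\,d\mu\le A$ up to $o(1)$, after a refined estimate: one writes $u_\lambda(x)\le u_\lambda(y)+h^\infty(y,x)+o(1)$ on $\mathcal M$. Combined with $u=u(x_0)+h^\infty(x_0,\cdot)$ on $M_{i_0}$, which holds because $h^\infty(y,x_0)+h^\infty(x_0,y)=0$ there, this yields $u(x_0)\le C$. For the maximal solution we also need $u\le u(x_0)+h^\infty(x_0,\cdot)$ everywhere. This should follow from the representation of critical solutions, $u(x)=\min_{y\in\mathcal A}(u(y)+h^\infty(y,x))$, together with showing that the values on the other static classes $M_j$ are not binding in the limit.

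The lower bound is where maximality and the sign condition (a) enter. I would build, for each small $\lambda$, a subsolution of \eqref{E} close to $w_C:=h^\infty(x_0,\cdot)+C-\varepsilon$; then maximality gives $u_\lambda\ge w_C-o(1)$. On $M_{i_0}$ the discount term $\lambda(a w_C - A)$ has the right sign on average by the definition of $C$ as an infimum. On $M_j$ for $j\neq i_0$ we have $a<0$, so we can lower the function there, adding $-K$ localized near $M_j$, to make $a w - A$ negative. Gluing these pieces into a genuine subsolution requires a strict subsolution off the Aubry set to absorb the $O(\lambda)$ errors. This gluing is the main obstacle: it requires uniform control of the $O(\lambda)$ correction, and I would try the large-time/Lax--Oleinik stabilization announced as Lemma \ref{stab}. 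The plan is to run the Lax--Oleinik semigroup on a corrected candidate for times of order $\lambda^{-1}$ and show that it converges to the elementary solution with the right constant.

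Finally, since the upper and lower bounds match, every subsequential limit equals $h^\infty(x_0,x)+C$, and the whole family converges.
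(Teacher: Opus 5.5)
Your existence argument is a valid alternative to the paper's use of Lemma \ref{n11}. Every subsolution $w$ of \eqref{E} satisfies $\int_M a w\,d\mu\leqslant A$ for all projected Mather measures, so $w\leqslant A/\epsilon$ somewhere on $M_{i_0}$. A Gronwall estimate as in Lemma \ref{bM} then bounds all subsolutions uniformly; a ``uniform Lipschitz bound'' is not available before $w$ is known to be bounded. The Perron bump argument then shows that the supremum of all subsolutions above $v_0$ is the maximal solution.

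Your upper bound is also correct and needs no refined estimate. Lemma \ref{leqA} together with Lemma \ref{h=w} on $M_{i_0}$ gives $u(x_0)\leqslant C$, and $u\leqslant u(x_0)+h^\infty(x_0,\cdot)$ is just Proposition \ref{h>w}(ii).

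The gap is the lower bound $u\geqslant h^\infty(x_0,\cdot)+C$. This is the substance of the theorem, and you give no working mechanism for it.
\begin{itemize}
\item Along any orbit $\gamma$ in the Aubry set, $w_C$ is calibrated. So a subsolution $w$ of \eqref{E} with $\phi:=w-w_C$ small must satisfy $\phi(\gamma(t))-\phi(\gamma(s))\leqslant-\lambda\int_s^t(aw-A)\,d\tau$ for all $s<t$. On $M_{i_0}$ you only know $aw_C-A<0$ \emph{on average} over Mather measures. You would therefore need a corrector for this cohomological inequality on time scales of order at least $\lambda^{-1}$. A strict critical subsolution cannot absorb the error near $\mathcal A$, where its strictness degenerates.
\item On $M_j$, $j\neq i_0$, your sign is reversed. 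Since $a<0$ there, $aw\leqslant A$ is a \emph{lower} bound on $w$, and adding $-K$ increases $aw-A$ by $|a|K$. In fact nothing needs to be done there: $v_0(x_0)\leqslant C$ gives $w_C\geqslant v_0-\varepsilon$, and $A>\|a\|_\infty\|v_0\|_\infty$.
\item ``Running the semigroup for times of order $\lambda^{-1}$'' is not an argument. Lemma \ref{stab} only gives $T^\lambda_t\varphi\to u_\lambda$; it says nothing about where $u_\lambda$ lies.
\end{itemize}

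What is missing is control of the $u_\lambda$-calibrated curves, which is how the paper exploits both maximality and condition (a).
\begin{itemize}
\item Through Lemma \ref{stab}, the occupation measures of backward calibrated curves of the maximal solution satisfy $\int a\,d\tilde\mu_{z,\lambda}\geqslant 0$ (Lemma \ref{mu*>0}). Hence their Mather limits charge $M_{i_0}$.
\item A static-class argument shows that curves issued from $z\in M_{i_0}$ never approach the classes where $a<0$. The key fact is that a calibrated curve visiting $M^{\delta}_{i_1},\dots,M^{\delta}_{i_p},M^{\delta}_{i_1}$ in order, with $\delta,\lambda\to0$, forces these sets into one static class (Lemma \ref{xi1}, Corollary \ref{twice}, Lemma \ref{alp}).
\item This gives $\frac1t\int_{-t}^0a(\gamma^z_\lambda(s))\,ds>\epsilon$ for $t\geqslant T_0$ (Lemma \ref{e1e2}). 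Consequently the discounted measures with weight $e^{-\lambda\int_t^0a(\gamma^z_\lambda(s))\,ds}$ are well defined and converge to Mather measures supported in $M_{i_0}$ (Lemma \ref{suponM1}).
\item The representation inequality of Lemma \ref{geqA} then yields $u_*\geqslant w$ on $M_{i_0}$ for every $w\in\mathcal S$.
\item Even then, equality on the other classes is not automatic. It comes from Lemma \ref{eM1} and the chain argument of Lemma \ref{cononM} along calibrated curves from $M_j$ into $M_{i_0}^\delta$, followed by Proposition \ref{uniqueM}.
\end{itemize}
Without some ingredient of this kind, your lower bound, and hence the identification of the limit, is unproved.
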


Now we give two examples. The first one was discussed in \cite[Section 5.3]{GL}.
\begin{ex}\label{ex1}
Consider
\begin{equation}\label{e11}
  \lambda a(x)u(x)+u'(x)\bigl(u'(x)-U'(x)\bigr)=A\lambda
  \quad \textrm{in}\quad \mathbb S^1\simeq [0,1),
\end{equation}
where $U:\mathbb S^1\to\mathbb R$ is smooth and has exactly two critical points $0$ (minimum) and $X$ (maximum). Then there are two classical solutions (up to the addition of a constant)
\[
u_1(x)\equiv 0,
\qquad
u_2(x)=U(x)
\]
of
\begin{equation}\label{e12}
  u'(x)\bigl(u'(x)-U'(x)\bigr)=0.
\end{equation}
Other viscosity solutions of \eqref{e12} are given by
\[
u(x):=\min\{C_1,U(x)+C_2\},
\]
where $C_1$ and $C_2$ are constants.
These solutions may be non-smooth at points in $\mathbb S^1\setminus\{0,X\}$.
Since all critical solutions are differentiable on the Aubry set, we have
\[
\mathcal A=\mathcal M=\{0,X\}.
\]
As pointed out in Proposition \ref{h>w}, for $y\in\mathcal A$, $h^\infty(y,\cdot)$ is the maximal subsolution $w$ of \eqref{e12}
satisfying $w(y)=0$.
The elementary solutions of \eqref{e12} are therefore
\[
h^\infty(0,x)=U(x)-U(0),
\qquad
h^\infty(X,x)\equiv 0.
\]
Since $u_1(x)\equiv0$ is a solution of \eqref{e12}, we take $A>0$ in \eqref{e11}.
Then, applying Theorem \ref{thm1}, if $a(0)>0$ and $a(X)<0$ (resp. $a(0)<0$ and $a(X)>0$), the maximal solution of
\eqref{e11} converges uniformly, up to an additive constant, to the elementary solution
$u_2(x)=U(x)$ (resp. $u_1(x)\equiv0$). The critical solution obtained by previous vanishing discount approaches coincides with $u_1(x)\equiv 0$, as noted in \cite{GL}.

This discussion extends to the case where $U:\mathbb S^1\to\mathbb R$ has finitely many minimal and maximal points $\{x_i\}_{i=1}^k$. According to \cite[Lemma 4.1]{GL}, the projected Aubry set $\mathcal A=\{x_i\}_{i\in\{1,\dots,k\}}$. By \cite[Proposition 3-11.4]{global}, every static class in the projected Aubry set is connected. Thus, each $x_i$ forms a distinct, connected static class. Choosing $a(x_i)>0$ and $a(x_j)<0$ for all $j\neq i$ selects the elementary solution $h^\infty(x_i,x)$ associated with $x_i$.
\end{ex}

\begin{ex}
Consider
\begin{equation}\label{e21}
  \lambda a(x)u(x)+(u'(x))^2-U(x)-A\lambda=0
  \quad \textrm{in}\quad \mathbb S^1\simeq [0,1),
\end{equation}
where $U:\mathbb S^1\to\mathbb R$ satisfies
\[
U\geqslant 0,\qquad \min_{\mathbb S^1}U=0,
\qquad U(0)=U(X)=0\qquad U(x)>0\text{ for }x\neq 0,X,
\]
Then $\mathcal A=\mathcal M=\{0,X\}$. Let $\tilde U$ be the periodic extension of $U$ to $\mathbb R$ and define
\[
p_\pm(x):=\pm\sqrt{\tilde U(x)}.
\]
There exists $s_1\in[0,1)$ such that
\[
\int_0^{s_1}p_+(x)\, dx+\int_{s_1}^1 p_-(x)\, dx=0,
\]
and $s_2\in[X,X+1)$ such that
\[
\int_X^{s_2}p_+(x)\, dx+\int_{s_2}^{X+1}p_-(x)\, dx=0.
\]
Define
\[
u_1(x)=
\begin{cases}
\displaystyle \int_0^x p_+(y)\, dy, & x\in[0,s_1],\\[1ex]
\displaystyle -\int_x^1 p_-(y)\, dy, & x\in(s_1,1),
\end{cases}
\]
and
\[
u_2(x)=
\begin{cases}
\displaystyle \int_X^x p_+(y)\, dy, & x\in[X,s_2],\\[1ex]
\displaystyle -\int_x^{X+1} p_-(y)\, dy, & x\in(s_2,X+1).
\end{cases}
\]
After periodic extension and projection onto $\mathbb S^1$, $u_1$ and $u_2$ are viscosity
solutions of
\begin{equation}\label{e22}
  (u'(x))^2-U(x)=0
  \quad \textrm{in}\quad \mathbb S^1.
\end{equation}
Other viscosity solutions of \eqref{e22} are given by
\[
u(x):=\min\{u_1(x)+C_1,u_2(x)+C_2\}.
\]
The elementary solutions of \eqref{e22} are
\[
h^\infty(0,x)=u_1(x),
\qquad
h^\infty(X,x)=u_2(x).
\]
By Theorem \ref{thm1}, if we choose $A>\|a\|_\infty\|u_1\|_\infty$ and $a(0)>0$, $a(X)<0$ (resp. $a(0)<0$, $a(X)>0$), the maximal solution of \eqref{e21} converges uniformly, up to an additive constant, to $u_1$ (resp. $u_2$). 
In contrast, previous works on the vanishing discount problem select the critical solution as the supremum of critical subsolutions $w(x)$ satisfying $w(0)\leqslant 0$ and $w(X)\leqslant 0$, which coincides with $u_0(x)=\min\{u_1(x),u_2(x)\}$; see \cite[Section 4.1]{Zbook}.

This example can be generalized to the case where $x$ belongs to a $d$-dimensional closed connected manifold $M$, and the potential $U:M\to\mathbb R$ has finitely many minimal points $\{x_i\}_{i=1}^k$. 
By \cite{Da3,F}, the projected Aubry set is $\mathcal A=\{x_i\}_{i=1}^k$. Since each static class in the projected Aubry set is connected, these points belong to distinct static classes. 
For each index $i\in\{1,\dots,k\}$, taking $a(x_i)>0$ and $a(x_j)<0$ for all $j\neq i$ allows us to select the elementary solution $h^\infty(x_i,x)$ associated with $x_i$.
\end{ex}

\section{Preliminaries}\label{pre}

\subsection*{Viscosity solutions and weak KAM solutions}

In this subsection, we collect several properties of viscosity and weak KAM solutions. We refer the reader to \cite{Barles,guide,Fat12} for further details.

\begin{defn}
Let $G:T^*M\times\mathbb R\to\mathbb R$ be a continuous function and $c\in\mathbb R$. A function $u\in C(M)$ is called a viscosity subsolution (resp. supersolution) of
\begin{equation}\label{hjj}
  G(x,Du(x),u(x))=c\quad \textrm{in}\quad M
\end{equation}
if for each $\phi\in C^1(M)$, when $u-\phi$ attains its local maximum (resp. minimum) at $x$, then
\begin{equation*}
  G(x,D\phi(x),u(x))\leqslant c,\quad (\textrm{resp}.\ \geqslant c).
\end{equation*}
A continuous function $u$ is called a viscosity solution of \eqref{hjj} if it is both a viscosity subsolution and a viscosity supersolution. If there is a constant $\bar c<c$ such that \[G(x,Du(x),u(x))\leqslant \bar c\quad \textrm{in}\quad M\] holds in the viscosity sense, we call $u$ a strict subsolution of \eqref{hjj}.
\end{defn}

\begin{prop}\label{stability}
Let $(G_n)_n$ and $(u_n)_n$ be two sequences of functions in $C(T^*M\times\mathbb R)$ and $C(M)$ respectively. For each $n\in\mathbb N$, $u_n$ is a solution (resp. subsolution, supersolution) of \eqref{hjj} with $G=G_n$. If $u_n\to u$ uniformly and $G_n\to G$ locally uniformly as $n\to+\infty$, then $u$ is a solution (resp. subsolution, supersolution) of \eqref{hjj}.
\end{prop}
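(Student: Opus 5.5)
The plan is the standard perturbation argument. I will show that a strict local maximum of $u-\phi$ is approximately attained by $u_n-\phi$ at nearby points, and then pass to the limit in the inequalities for $G_n$. I treat the subsolution case. The supersolution case is identical with maxima replaced by minima and inequalities reversed, and the solution case follows by combining the two.

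\emph{Step 1: make the maximum strict.} Let $\phi\in C^1(M)$ and suppose $u-\phi$ attains a local maximum at $x_0$. I choose a smooth function $\psi\geqslant 0$ on $M$ with the following properties:
\begin{itemize}
\item $\psi(x_0)=0$ and $D\psi(x_0)=0$;
\item $\psi(x)>0$ for $x\neq x_0$ in a neighborhood of $x_0$.
\end{itemize}
For instance, take $\psi(x)=\chi(x)\,d(x,x_0)^2$ inside a geodesically convex ball around $x_0$, with $\chi$ a cutoff, and extend it positively outside. Set $\tilde\phi=\phi+\psi$. Then there is a closed geodesic ball $\bar B=\bar B_r(x_0)$ on which $u-\tilde\phi$ has a \emph{strict} maximum at $x_0$. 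Since $D\tilde\phi(x_0)=D\phi(x_0)$, it suffices to prove $G(x_0,D\tilde\phi(x_0),u(x_0))\leqslant c$.

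\emph{Step 2: localize the maxima of $u_n-\tilde\phi$.} By compactness of $\bar B$, the function $u_n-\tilde\phi$ attains its maximum over $\bar B$ at some point $x_n\in\bar B$. I claim $x_n\to x_0$.

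Take any subsequence with $x_{n}\to \bar x\in\bar B$. Uniform convergence $u_n\to u$ gives
\[
(u-\tilde\phi)(\bar x)=\lim (u_n-\tilde\phi)(x_n)\geqslant \lim (u_n-\tilde\phi)(x_0)=(u-\tilde\phi)(x_0).
\]
By strictness of the maximum, $\bar x=x_0$. Hence for $n$ large, $x_n$ lies in the interior of $\bar B$. It is therefore a local maximum of $u_n-\tilde\phi$ on $M$, and the subsolution property of $u_n$ yields
\[
G_n\bigl(x_n,D\tilde\phi(x_n),u_n(x_n)\bigr)\leqslant c .
\]

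\emph{Step 3: pass to the limit.} Two facts control the arguments:
\begin{itemize}
\item $D\tilde\phi$ is continuous, so $D\tilde\phi(x_n)\to D\tilde\phi(x_0)$;
\item $u_n(x_n)\to u(x_0)$, by uniform convergence together with continuity of $u$.
\end{itemize}
Consequently the points $(x_n,D\tilde\phi(x_n),u_n(x_n))$ stay in a fixed compact set $K\subset T^*M\times\mathbb R$ and converge to $(x_0,D\phi(x_0),u(x_0))$. Then
\[
\bigl|G_n(\cdot)-G(x_0,D\phi(x_0),u(x_0))\bigr|\leqslant \sup_K|G_n-G|+\bigl|G(x_n,D\tilde\phi(x_n),u_n(x_n))-G(x_0,D\phi(x_0),u(x_0))\bigr|.
\]
The first term tends to $0$ by local uniform convergence of $G_n$ to $G$, and the second by continuity of $G$. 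Letting $n\to\infty$ gives $G(x_0,D\phi(x_0),u(x_0))\leqslant c$.

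The only point needing care is Step 2, namely ensuring that the maximum points $x_n$ converge to $x_0$ and are interior. This is exactly why the maximum is made strict in Step 1 before anything else. The construction of $\psi$ on a manifold is routine in a normal coordinate chart.
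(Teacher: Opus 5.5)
Your proof is correct: you make the maximum strict with $\psi$, you show the maximizers $x_n$ of $u_n-\tilde\phi$ on $\bar B_r(x_0)$ converge to $x_0$ and are eventually interior, and you pass to the limit using local uniform convergence of $G_n$ and continuity of $G$. The paper gives no proof of this proposition and simply quotes it from the standard viscosity-solution references it cites, where this same perturbation argument is the textbook proof.
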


\begin{prop}\label{supsub}
If $u$ is the pointwise supremum of a family of subsolutions of \eqref{hjj}, then $u$ is a subsolution of \eqref{hjj}.
\end{prop}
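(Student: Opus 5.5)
The plan is to use the standard localization argument with a strict maximum, perturbing the test function and passing to the limit through the continuity of $G$. Following the paper's convention, the supremum $u=\sup_{\alpha}u_\alpha$ is assumed continuous. Fix $\phi\in C^1(M)$ and a point $x_0$ where $u-\phi$ attains a local maximum. We must show $G(x_0,D\phi(x_0),u(x_0))\leqslant c$.

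\emph{Step 1: make the maximum strict.} Work in a coordinate chart around $x_0$ and replace $\phi$ by $\tilde\phi(y)=\phi(y)+|y-x_0|^2$. Since $D\tilde\phi(x_0)=D\phi(x_0)$, it suffices to treat the case where $x_0$ is a strict maximum of $u-\phi$ on a closed ball $\bar B=\bar B(x_0,r)$ inside the chart.

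\emph{Step 2: pick approximating subsolutions and their maximizers.}
\begin{itemize}
\item For each $n$, choose a member $u_n$ of the family with $u_n(x_0)>u(x_0)-1/n$.
\item Let $y_n\in\bar B$ be a maximizer of the continuous function $u_n-\phi$ on $\bar B$. Then
\[u_n(y_n)-\phi(y_n)\geqslant u_n(x_0)-\phi(x_0)>u(x_0)-\phi(x_0)-1/n.\]
\item Since $u_n\leqslant u$, any limit point $\bar y$ of $(y_n)$ satisfies
\[u(\bar y)-\phi(\bar y)\geqslant \limsup_n\bigl(u_n(y_n)-\phi(y_n)\bigr)\geqslant u(x_0)-\phi(x_0).\]
Here we use the continuity of $u$ and $\phi$.
\item Strictness of the maximum forces $\bar y=x_0$, so $y_n\to x_0$.
\end{itemize}

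\emph{Step 3: control $u_n(y_n)$.} The squeeze
\[\phi(y_n)-\phi(x_0)+u(x_0)-1/n<u_n(y_n)\leqslant u(y_n)\]
gives $u_n(y_n)\to u(x_0)$.

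\emph{Step 4: pass to the limit.} For $n$ large, $y_n$ lies in the interior of $\bar B$, so $u_n-\phi$ has a local maximum at $y_n$. The subsolution property of $u_n$ then yields
\[G\bigl(y_n,D\phi(y_n),u_n(y_n)\bigr)\leqslant c.\]
Letting $n\to\infty$, and using that $\phi\in C^1$ and $G$ is continuous, we obtain $G(x_0,D\phi(x_0),u(x_0))\leqslant c$.

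The only delicate point is Step 3: the $u$-dependence of $G$ requires convergence of the values $u_n(y_n)$, not just of the points $y_n$. This is exactly what the squeeze provides, and it relies on the continuity of $u$. The remaining steps are routine.
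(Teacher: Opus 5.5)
Your argument is correct: it is the standard strict-maximum perturbation proof, which is what the paper relies on (it states this proposition without proof as a known fact from the viscosity-solution references it cites). You rightly note that continuity of $G$ together with the squeeze $u_n(y_n)\to u(x_0)$ is all that is needed, with no monotonicity in $u$.

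The only cosmetic fix concerns Step 1: $|y-x_0|^2$ should be multiplied by a cutoff supported in the chart and equal to $1$ near $x_0$. This ensures the modified test function genuinely belongs to $C^1(M)$, as the definition requires.
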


Now we denote by $(x,p,u)$ a point in $T^*M\times\mathbb R$. Assume that $G(x,p,u)$ is continuous, and is convex in $p$ for each $(x,u)\in M\times\mathbb R$.
\begin{prop}\label{ue}
Let $w:M\to\mathbb R$ be a Lipschitz continuous function verifying \[G(x,Dw(x),w(x))\leqslant c\] for almost every $x\in M$. Then for every $\varepsilon>0$, there is $w_\varepsilon\in C^\infty(M)$ such that \[\|w-w_\varepsilon\|_\infty\leqslant \varepsilon,\quad G(x,Dw_\varepsilon(x),w_\varepsilon(x))\leqslant c+\varepsilon\quad \forall x\in M.\]
\end{prop}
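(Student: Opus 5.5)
The plan is the standard mollification argument: regularize $w$ by convolution in local charts and glue the pieces with a partition of unity. Convexity of $G$ in $p$ (through Jensen's inequality) controls the gradients, and uniform continuity of $G$ on a compact set absorbs the errors from moving $x$ and $u$. Let $K$ be a Lipschitz constant of $w$.

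\emph{Charts and a compact set.} Cover $M$ by finitely many coordinate charts $\varphi_j:U_j\to\mathbb R^d$ and take compact sets $V_j\Subset U_j$ that still cover $M$. Choose a smooth partition of unity $(\psi_j)$ with $\operatorname{supp}\psi_j\subset V_j$. In each chart, write $G$ in coordinates as $\tilde G_j(x,p,u)$. This is continuous and still convex in $p$, because the change of coordinates acts linearly on $p$. Since $|Dw|$ is bounded by $K$ a.e., all the quantities below lie in the compact set
\[
\mathcal K=\{(x,p,u): x\in M,\ \|p\|_x\leqslant K',\ |u|\leqslant \|w\|_\infty+1\},
\]
where $K'$ depends only on $K$, the charts and the cutoffs. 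Let $\omega$ be a modulus of continuity of $G$ on $\mathcal K$, uniform in the local coordinates.

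\emph{Local mollification.} On $\varphi_j(V_j)$ set $w_j=(w\circ\varphi_j^{-1})*\rho_\delta$, where $\rho_\delta$ is a standard mollifier and $\delta$ is small enough that the convolution only uses values inside $\varphi_j(U_j)$. Then $\|w_j-w\|_\infty\leqslant K\delta$ and $Dw_j(x)=\int Dw(x-y)\rho_\delta(y)\,dy$. The key point is to first freeze $(x,w_j(x))$: for a.e. $y$,
\[
G\big(x,Dw(x-y),w_j(x)\big)\leqslant G\big(x-y,Dw(x-y),w(x-y)\big)+\omega(C\delta)\leqslant c+\omega(C\delta).
\]
Jensen's inequality, using convexity in $p$ at the fixed pair $(x,w_j(x))$, then gives
\[
G\big(x,Dw_j(x),w_j(x)\big)\leqslant c+\omega(C\delta)
\]
for every $x\in V_j$.

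\emph{Gluing.} Define $w_\varepsilon=\sum_j\psi_j w_j$. Clearly $\|w_\varepsilon-w\|_\infty\leqslant K\delta$. Since $\sum_j D\psi_j=0$,
\[
Dw_\varepsilon=\sum_j\psi_j Dw_j+\sum_j D\psi_j\,(w_j-w),
\]
and the second sum has norm at most $C K\delta$. By convexity in $p$ at the fixed pair $(x,w_\varepsilon(x))$,
\[
G\Big(x,\sum_j\psi_jDw_j,w_\varepsilon\Big)\leqslant\sum_j\psi_j\,G(x,Dw_j,w_\varepsilon).
\]
Each term satisfies $G(x,Dw_j,w_\varepsilon)\leqslant G(x,Dw_j,w_j)+\omega(2K\delta)$ by continuity in $u$. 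Adding back the small gradient correction costs one more $\omega(CK\delta)$. Altogether
\[
G(x,Dw_\varepsilon(x),w_\varepsilon(x))\leqslant c+\omega(C'\delta)
\]
for every $x\in M$. Choosing $\delta$ so that $K\delta\leqslant\varepsilon$ and $\omega(C'\delta)\leqslant\varepsilon$ completes the proof.

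\emph{Main obstacle.} The only delicate point is the bookkeeping in the mollification step. Jensen's inequality can only be applied with $x$ and $u$ frozen, so the shift in $(x,u)$ must be paid for by uniform continuity before averaging in $p$. For this, all arguments must stay in the single compact set $\mathcal K$, which is where the Lipschitz bound $K$ is essential.
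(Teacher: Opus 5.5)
Your argument is correct and is essentially the standard proof. The paper does not prove Proposition~\ref{ue} itself; it quotes it as a known fact from the viscosity/weak KAM literature cited at the start of Section~2, where it is proved just as you do: mollify in charts, pay for the shift in $(x,u)$ by uniform continuity on a compact set before applying Jensen's inequality in $p$ with $(x,u)$ frozen, then glue with a partition of unity, using $\sum_j D\psi_j=0$ to make the extra gradient term $O(\delta)$.
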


Now we further assume that there is $\Theta>0$ such that
\[|G(x,p,u)-G(x,p,v)|\leqslant \Theta|u-v|,\quad \forall (x,p)\in T^*M,\ \forall u,v\in\mathbb R,\]
and \[\lim_{\|p\|_x\to+\infty}\inf_{x\in M}G(x,p,0)=+\infty,\]
then we can define the associated Lagrangian
\[L_G(x,v,u):=\sup_{p\in T^*_xM}(p\cdot v-G(x,p,u)).\]
\begin{defn}\label{bws}
Let $\lambda\in\mathbb R$. A function $u\in C(M)$ is called a backward (resp. forward) weak KAM solution of \eqref{hjj} if
\begin{itemize}
\item [(1)] For each absolutely continuous curve $\gamma:[t',t]\rightarrow M$, we have
\begin{equation*}
  u(\gamma(t))-u(\gamma(t'))\leqslant \int_{t'}^{t}\bigg[L_G(\gamma(s),\dot \gamma(s),u(\gamma(s)))+c\bigg]\, ds.
\end{equation*}
The above condition is denoted by $u\prec L_G+c$.

\item [(2)] For each $x\in M$, there exists an absolutely continuous curve $\gamma_-:(-\infty,0]\rightarrow M$ (resp. $\gamma_+:[0,+\infty)\to M$) with $\gamma_\pm(0)=x$ such that
\begin{align*}
  &u(x)-u(\gamma_-(t))=\int_t^0\bigg[L_G(\gamma_-(s),\dot \gamma_-(s),u(\gamma_-(s)))+c\bigg]\, ds,\quad \forall t<0.
  \\ &\textrm{\Big(resp.}\ u(\gamma_+(t))-u(x)=\int_0^t \bigg[L_G(\gamma_+(s),\dot \gamma_+(s),u(\gamma_+(s)))+c\bigg]\, ds,\quad \forall t>0\textrm{\Big).}
\end{align*}
The curves satisfying the above equality are called $(u,L_G,c)$-calibrated curves.
\end{itemize}
\end{defn}

According to \cite[Appendix D]{NWY} and \cite[Appendix A]{n2}, we have
\begin{prop}\label{soleq}
The following are equivalent:
\begin{itemize}
\item[(i)] $u\prec L_G+c$.
\item[(ii)] $u(x)$ a viscosity subsolution of \eqref{hjj}.
\item[(iii)] $u(x)$ is Lipschitz continuous and $G(x,Du(x),u(x))\leqslant c$ holds almost everywhere.
\end{itemize}
The following are equivalent:
\begin{itemize}
\item[(i)] $u(x)$ a viscosity solution of \eqref{hjj}.
\item[(ii)] $u(x)$ is a backward weak KAM solution of \eqref{hjj}.
\end{itemize}
\end{prop}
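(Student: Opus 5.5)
The plan is to prove the two chains of equivalences by the standard weak KAM routes. Throughout I use that $L_G$ inherits the Lipschitz bound $|L_G(x,v,u)-L_G(x,v,w)|\leqslant\Theta|u-v|$ (with $w$ in place of $v$ in the last slot), and that $L_G$ is superlinear in $v$. I also use that $G(x,\cdot,u)$ is recovered from $L_G$ by Fenchel duality, which holds because $G$ is convex, continuous and coercive in $p$.

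\textbf{Subsolution equivalences.} I would run the cycle (iii)$\Rightarrow$(i)$\Rightarrow$(ii)$\Rightarrow$(iii).
\begin{enumerate}
\item[(iii)$\Rightarrow$(i).] Apply Proposition \ref{ue} to get $w_\varepsilon\in C^\infty(M)$ with $\|w-w_\varepsilon\|_\infty\leqslant\varepsilon$ and $G(x,Dw_\varepsilon,w_\varepsilon)\leqslant c+\varepsilon$. Along any absolutely continuous $\gamma$, the Fenchel inequality gives
\[\tfrac{d}{ds}w_\varepsilon(\gamma(s))\leqslant L_G(\gamma,\dot\gamma,w_\varepsilon(\gamma))+c+\varepsilon.\]
Integrate this, replace $w_\varepsilon$ by $u$ inside $L_G$ at cost $\Theta\varepsilon(t-t')$, and let $\varepsilon\to0$.
\item[(i)$\Rightarrow$(ii).] Domination along short geodesics of bounded speed gives the Lipschitz continuity of $u$. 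If $u-\phi$ has a local maximum at $x$, take for each $v\in T_xM$ the curve $\gamma(s)=\exp_x(sv)$, $s\in[-t,0]$. Then $\phi(x)-\phi(\gamma(-t))\leqslant u(x)-u(\gamma(-t))\leqslant\int_{-t}^0[L_G+c]\,ds$. Dividing by $t$ and letting $t\to0$ (using continuity of $u$) yields $D\phi(x)\cdot v\leqslant L_G(x,v,u(x))+c$. Taking the supremum over $v$ gives $G(x,D\phi(x),u(x))\leqslant c$.
\item[(ii)$\Rightarrow$(iii).] Coercivity of $G$, uniform over the bounded range of $u$ thanks to the $\Theta$-Lipschitz dependence, gives the standard a priori Lipschitz bound for continuous viscosity subsolutions. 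At every differentiability point, given by Rademacher, $Du(x)$ belongs to the superdifferential, so the inequality holds almost everywhere.
\end{enumerate}

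\textbf{Solution equivalences.}
\begin{enumerate}
\item[Weak KAM $\Rightarrow$ viscosity.] The subsolution property is the first part. For the supersolution property, let $u-\phi$ have a local minimum at $x$ and take a calibrated curve $\gamma_-$. Calibrated curves have velocity bounded uniformly: this follows from superlinearity of $L_G$ and the Lipschitz bound on $u$. Along a sequence $t_n\to0$ the quotients $(x-\gamma_-(-t_n))/t_n$ converge to some $v$. The calibration identity and $\phi(x)-\phi(\gamma_-(-t))\geqslant u(x)-u(\gamma_-(-t))$ then give $D\phi(x)\cdot v\geqslant L_G(x,v,u(x))+c$. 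Hence $G(x,D\phi(x),u(x))\geqslant D\phi(x)\cdot v-L_G(x,v,u(x))\geqslant c$.
\item[Viscosity $\Rightarrow$ weak KAM.] Domination comes from the first part. For the calibrated curves I would introduce the backward Lax--Oleinik semigroup $T^-_t$ for the contact Lagrangian $L_G+c$, defined via the implicit variational principle (Wang--Wang--Yan):
\[T^-_t\varphi(x)=\inf_{\gamma(t)=x}\Big\{\varphi(\gamma(0))+\int_0^t\big[L_G(\gamma,\dot\gamma,T^-_s\varphi(\gamma(s)))+c\big]\,ds\Big\}.\]
The $\Theta$-Lipschitz dependence on $u$ makes this well posed through a contraction argument on finite time intervals. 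The function $T^-_t\varphi$ is the viscosity solution of $w_t+G(x,Dw,w)=c$ with initial datum $\varphi$. A viscosity solution $u$ of \eqref{hjj} is a stationary solution of this evolution problem, so comparison for the evolution equation, valid on finite time intervals because of the Lipschitz bound in $u$, gives $T^-_tu=u$. Tonelli's theorem supplies minimizers on $[-n,0]$ ending at $x$, with velocities bounded uniformly in $n$. Since $u$ is a fixed point these minimizers calibrate $u$, and a diagonal extraction (Ascoli) produces $\gamma_-$ on $(-\infty,0]$. The calibration equality passes to the limit by lower semicontinuity combined with domination.
\end{enumerate}

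I expect this last step to be the main obstacle: establishing the implicit Lax--Oleinik representation and the comparison principle without any monotonicity in $u$. This is exactly where one must rely on the finite-time Gronwall-type estimates coming from $\Theta$, rather than on the usual proper structure.
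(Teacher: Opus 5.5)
Your argument is correct. The paper gives no proof of this proposition; it is simply quoted from \cite[Appendix D]{NWY} and \cite[Appendix A]{n2}. So what you have written is a self-contained reconstruction along standard weak KAM lines, not a variant of an argument in the text.

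The choice most worth commenting on is the last step, which you flag as the main obstacle. The implicit variational principle and a comparison principle without monotonicity are not needed there. Every condition in the statement (domination, the viscosity inequalities, the a.e.\ inequality, calibration) involves $G$ and $L_G$ only at points of the form $(y,\cdot,u(y))$. So, for the fixed continuous function $u$, each item is literally the corresponding item for the frozen Hamiltonian $\bar H(x,p):=G(x,p,u(x))$. This Hamiltonian is continuous, convex and uniformly coercive, since $\bar H(x,p)\geqslant G(x,p,0)-\Theta\|u\|_\infty$, and its Lagrangian is $\bar L(x,v)=L_G(x,v,u(x))$. The proposition therefore reduces to the classical non-contact equivalences for $\bar H$. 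There, the fact that $u=S_tu$ for the explicit Lax--Oleinik semigroup $S_t$ of $\bar L+c$ is standard, and your Tonelli--Ascoli construction of $\gamma_-$ goes through unchanged. Your route has the merit of setting up the contact semigroup $T^-_t$, which the paper relies on later (Lemmas~\ref{n11}, \ref{n12} and \ref{stab}). The frozen route is shorter and needs only the classical theory for continuous Hamiltonians.

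Two smaller points.

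\emph{The supersolution step.} Passing from $\frac{1}{t_n}\int_{-t_n}^0 L_G(\gamma_-,\dot\gamma_-,u(\gamma_-))\,ds$ to $L_G(x,v,u(x))$ is a $\liminf$ inequality. It needs Jensen's inequality in a chart, because only the averaged velocities converge to $v$; continuity alone does not give it. You can bypass this, together with the velocity bound and the extraction of $v$. Integrate the Fenchel inequality $D\phi(\gamma_-)\cdot\dot\gamma_-\leqslant L_G(\gamma_-,\dot\gamma_-,u(\gamma_-))+G(\gamma_-,D\phi(\gamma_-),u(\gamma_-))$ over $[-t,0]$ and compare with the calibration identity. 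This gives $\frac1t\int_{-t}^0\bigl[G(\gamma_-,D\phi(\gamma_-),u(\gamma_-))-c\bigr]\,ds\geqslant 0$, hence $G(x,D\phi(x),u(x))\geqslant c$ as $t\to0^+$.

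\emph{Infinite values of $L_G$.} For a merely coercive $G$, the Lagrangian $L_G$ may take the value $+\infty$ and is only lower semicontinuous. So in (i)$\Rightarrow$(ii), take the supremum over $v$ in the interior of the effective domain; this still recovers $G$ by convexity. For the Hamiltonians $G_\lambda$ actually used in the paper, $L_G$ is finite and continuous, and your argument applies as written.
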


\subsection*{Aubry-Mather theory}

In what follows, we always assume that $H:T^*M\to\mathbb R$ is of class $C^2$ and satisfies assumptions (H1) and (H2). We recall that $c_0$ denotes the critical value of $H$, and that $L:TM\to\mathbb R$ is the associated Lagrangian. We now collect several results from weak KAM theory; see \cite{Fathi08}. For extensions to continuous Hamiltonians, we refer to \cite{gen,DZ10,FS}.

For $t>0$, let $h_t:M\times M\to\mathbb R$ be the minimal action function, which is defined as
\[h_t(x,t)=\inf_{\gamma}\int_0^t [L(\gamma(s),\dot\gamma(s))+c_0]\, ds,\]
where the infimum is taken over all absolutely continuous curves $\gamma:[0,t]\to M$ satisfying $\gamma(0)=x$ and $\gamma(t)=y$. By Tonelli's theorem, the infimum is attained; see \cite{One}. The proof relies on the following lower semicontinuity result; see \cite[Theorem 3.5]{One}.
\begin{lem}\label{TM}
Let $J$ be a bounded interval of $\mathbb R$. Assume that $F(t,x,v)$ and $\frac{\partial F}{\partial v}(t,x,v)$ are continuous, $F(t,x,v)$ is convex in $v$, and bounded from below. Then the integral functional
\begin{equation*}
  \mathcal F(\gamma)=\int_J F(s,\gamma(s),\dot \gamma(s))\, ds
\end{equation*}
is sequentially weakly lower semicontinuous in $W^{1,1}(J,M)$, that is, if there is a sequence $(\gamma_n)_n$ weakly converges to $\gamma$ in $W^{1,1}(J,M)$, then
\[\mathcal F(\gamma)\leqslant\liminf_{n\to+\infty} \mathcal F(\gamma_n).\]
Equivalently we can say that the above inequality holds if $(\gamma_n)_n$ uniformly converges to $\gamma$ and the $L^1$-norms of $(\dot{\gamma}_n)_n$ are equi-bounded.
\end{lem}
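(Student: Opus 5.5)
The plan is the classical Tonelli argument: a convexity (supporting-hyperplane) inequality, together with a truncation that handles the fact that $\dot\gamma$ is only integrable. Since this is a standard result quoted from \cite{One}, I only indicate the structure.

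\emph{Reductions.} Let $\gamma_n\rightharpoonup\gamma$ weakly in $W^{1,1}(J,M)$. By Dunford--Pettis, the derivatives $\dot\gamma_n$ are equi-integrable. Hence the curves are equicontinuous, and $\gamma_n\to\gamma$ uniformly. We may then partition $J$ into finitely many subintervals on which $\gamma$, and $\gamma_n$ for $n$ large, stay in a single coordinate chart. Since $\liminf$ of a finite sum is at least the sum of the $\liminf$'s, it suffices to work in $\mathbb R^d$. Subtracting the lower bound of $F$ (which changes $\mathcal F$ by a constant, since $J$ is bounded), we assume $F\geqslant 0$.

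\emph{Key inequality.} Fix $R>0$ and set $E_R=\{s\in J: |\dot\gamma(s)|\leqslant R\}$. Using $F\geqslant0$ off $E_R$ and convexity in $v$ on $E_R$, we get
\[
\mathcal F(\gamma_n)\geqslant \int_{E_R}F(s,\gamma_n,\dot\gamma)\,ds+\int_{E_R}\frac{\partial F}{\partial v}(s,\gamma_n,\dot\gamma)\cdot(\dot\gamma_n-\dot\gamma)\,ds .
\]
On $E_R$ all arguments lie in a compact set, on which $F$ and $\partial_vF$ are bounded and uniformly continuous. Therefore the first integral tends to $\int_{E_R}F(s,\gamma,\dot\gamma)\,ds$ by dominated convergence.

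For the second integral, split $\partial_vF(s,\gamma_n,\dot\gamma)=\partial_vF(s,\gamma,\dot\gamma)+\big(\partial_vF(s,\gamma_n,\dot\gamma)-\partial_vF(s,\gamma,\dot\gamma)\big)$.
\begin{itemize}
\item The difference term tends to $0$ uniformly on $E_R$. Since $\|\dot\gamma_n-\dot\gamma\|_{L^1}$ is bounded, its contribution vanishes.
\item The remaining term is $\int_J \mathbf 1_{E_R}\,\partial_vF(s,\gamma,\dot\gamma)\cdot(\dot\gamma_n-\dot\gamma)\,ds$. This tends to $0$ by weak $L^1$ convergence, because $\mathbf 1_{E_R}\,\partial_vF(s,\gamma,\dot\gamma)\in L^\infty$.
\end{itemize}
Thus $\liminf_n\mathcal F(\gamma_n)\geqslant\int_{E_R}F(s,\gamma,\dot\gamma)\,ds$. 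Letting $R\to\infty$ and applying monotone convergence (recall $F\geqslant0$) gives the claim.

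\emph{Equivalent formulation.} Suppose $\gamma_n\to\gamma$ uniformly with $\|\dot\gamma_n\|_{L^1}$ bounded. Pass to a subsequence realizing the $\liminf$, which we may assume finite. In our applications $F$ is superlinear in $v$, as with $L$ in (L2). Then the bound on $\int F(s,\gamma_n,\dot\gamma_n)$ yields equi-integrability of $\dot\gamma_n$ by de la Vallée-Poussin. Dunford--Pettis then gives a weakly convergent subsequence, and its limit must be $\dot\gamma$ by uniform convergence of the primitives. So we are back in the first case.

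\emph{Main obstacle.} The delicate point is that $\partial_vF(s,\gamma,\dot\gamma)$ need not be bounded, or even integrable, since $\dot\gamma$ is merely $L^1$. This prevents passing to the limit directly in the linear term; the truncation to $E_R$, combined with $F\geqslant0$, is exactly what circumvents it.
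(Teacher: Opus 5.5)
Your proof of the main assertion, lower semicontinuity along sequences converging weakly in $W^{1,1}$, is correct. The steps are: uniform convergence via Dunford--Pettis, localization in charts, the supporting-hyperplane inequality on $E_R=\{|\dot\gamma|\leqslant R\}$, weak $L^1$ convergence tested against the bounded function $\mathbf{1}_{E_R}\partial_vF(s,\gamma,\dot\gamma)$, and monotone convergence. This is the classical Tonelli argument behind \cite[Theorem 3.5]{One}. The paper gives no proof of its own and only cites that reference.

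The gap is in the last sentence of the statement. There you add a hypothesis the lemma does not make, namely uniform superlinearity of $F$ in $v$. Without it, your reduction ``we are back in the first case'' is not available.

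Uniform convergence together with $\sup_n\|\dot\gamma_n\|_{L^1}<\infty$ does not force any subsequence of $(\dot\gamma_n)_n$ to converge weakly in $L^1$. For example, let $\gamma_n$ have $n$ teeth of height $1/n$ and slopes $\pm n^2$ and be flat elsewhere. Then $\gamma_n\to 0$ uniformly and $\|\dot\gamma_n\|_{L^1}=2$. But this mass sits on a set of measure $2n^{-2}$, so $(\dot\gamma_n)_n$ is not equi-integrable and, by Dunford--Pettis, has no weakly convergent subsequence. For $F$ of linear growth, such as $\sqrt{1+|v|^2}$, a bound on $\mathcal F(\gamma_n)$ gives no de la Vall\'ee-Poussin control either. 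So the second formulation concerns a strictly weaker mode of convergence: it is a stronger statement than the first, not an equivalent one.

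Your de la Vall\'ee-Poussin argument is fine in the uniformly superlinear case, which covers $F=L+c_0$. To prove the claim as stated you need a different tool. One option is Reshetnyak's lower semicontinuity theorem, applied in a chart to the graph measures $(\mathrm{id},\gamma_n)_\#\big((1,\dot\gamma_n)\,ds\big)$. These converge weakly$^*$ under exactly uniform convergence plus equibounded $L^1$ norms. The integrand to use is $(t,x,\sigma,w)\mapsto\sigma F(t,x,w/\sigma)$, extended at $\sigma=0$ by the recession function of $F(t,x,\cdot)$. After normalizing $F\geqslant 0$, it is nonnegative, lower semicontinuous, and convex and positively $1$-homogeneous in $(\sigma,w)$.

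Alternatively, restrict the claim to what the paper actually uses. In Lemmas \ref{alp0} and \ref{tinf} the curves are equi-Lipschitz, so $(\dot\gamma_n)_n$ is bounded in $L^\infty$. A subsequence then converges weakly in $L^1$, necessarily to $\dot\gamma$, and your first case applies with no superlinearity at all. In that case you should state the restriction explicitly rather than claim the general equivalence.
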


\begin{prop}\label{htlip}
There is $\kappa>0$ independent of $t$ such that $(x,y)\mapsto h_t(x,y)$ is $\kappa$-Lipschitz continuous for all $t>1$.
\end{prop}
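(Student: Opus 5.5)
The plan is to reduce the uniform Lipschitz bound to a uniform bound on the speed of minimizing curves, and then to compare actions by modifying a minimizer only on a short initial (or final) time interval whose length equals the displacement of the endpoint.

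\emph{Step 1: an a priori speed bound.} Fix $t>1$, $x,y\in M$, and let $\gamma:[0,t]\to M$ be a minimizer for $h_t(x,y)$; it exists by Tonelli's theorem and Lemma \ref{TM}. First bound the action from above. A constant-speed geodesic from $x$ to $y$ parametrized on $[0,t]$ has speed at most $\operatorname{diam}M/t\leqslant \operatorname{diam}M$, so
\[h_t(x,y)\leqslant t\,\Big(\max_{\|v\|_z\leqslant \operatorname{diam}M}L(z,v)+c_0\Big)=:C_1t .\]
Since $L+c_0$ is bounded below, say by $-C_2$, the action of $\gamma$ on each unit subinterval of $[0,t]$ is then bounded by a constant $C_3$ independent of $t$. (Split $[0,t]$ into intervals of length between $1$ and $2$ to handle a non-integer $t$.) By (L2), this yields in every such subinterval a time $s$ at which $\|\dot\gamma(s)\|\leqslant C_4$.

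Because $H$ is $C^2$ and Tonelli, the minimizer is a $C^2$ solution of the Euler--Lagrange flow, so its energy $H(\gamma,\partial_vL(\gamma,\dot\gamma))$ is constant along $\gamma$. The energy at the time $s$ above is bounded by a constant depending only on $C_4$. By superlinearity, this bounds $\|\dot\gamma\|$ on all of $[0,t]$ by some $K$ independent of $t,x,y$. I expect this step to be the main point: it is where the restriction $t>1$ enters, since for $t\to0$ the speed is unbounded.

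\emph{Step 2: moving the initial point.} Let $x'\in M$ with $\delta:=d(x,x')\leqslant 1$. Define a competitor curve for $h_t(x',y)$ as follows. On $[0,\delta]$, use a constant-speed geodesic from $x'$ to $\gamma(\delta)$. On $[\delta,t]$, follow $\gamma$. The first piece has length at most $\delta+K\delta$, hence speed at most $1+K$, and its action is at most $C_5\delta$, where $C_5=\max_{\|v\|\leqslant 1+K}|L+c_0|$. The action of $\gamma$ on $[0,\delta]$ is at least $-C_5\delta$, since there $\|\dot\gamma\|\leqslant K$. Therefore
\[h_t(x',y)\leqslant h_t(x,y)+2C_5\,d(x,x').\]
Exchanging the roles of $x$ and $x'$ gives the reverse inequality.

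If $d(x,x')>1$, split a minimizing geodesic from $x$ to $x'$ into pieces of length at most $1$ and add up the resulting estimates.

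\emph{Step 3: moving the final point, and conclusion.} The argument for $y$ is symmetric: replace $\gamma|_{[t-\delta,t]}$ by a geodesic from $\gamma(t-\delta)$ to $y'$. Combining the two steps via the triangle inequality on $M\times M$ gives the conclusion with $\kappa=2C_5$ (or $4C_5$ for the product metric). This constant depends only on $H$ and $M$, not on $t>1$.
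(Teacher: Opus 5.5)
Your argument is correct and is essentially the standard weak KAM proof that the paper relies on by citation (the paper gives no proof of its own and refers to the weak KAM literature): a uniform speed bound for minimizers of duration at least $1$, obtained by averaging and conservation of the energy $H(\gamma,\partial_vL(\gamma,\dot\gamma))$, followed by replacing the minimizer on an end interval of length $d(x,x')$ by a geodesic. One small slip: the bound on the action over \emph{every} unit subinterval does not follow from $h_t(x,y)\leqslant C_1t$ and $L+c_0\geqslant -C_2$ alone (it would need that restrictions of minimizers are minimizers); however, you only need a single time $s$ with $\|\dot\gamma(s)\|\leqslant C_4$, which already follows from $\frac1t\int_0^t\big(L(\gamma(\tau),\dot\gamma(\tau))+c_0\big)\,d\tau\leqslant C_1$ and (L2), after which energy conservation gives the bound on all of $[0,t]$.
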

The Peierls barrier $h^\infty:M\times M\to\mathbb R$ is defined as
\begin{equation}\label{barr}
  h^\infty(x,y)=\liminf_{t\to+\infty}h_t(x,y).
\end{equation}
The projected Aubry set is defined as
\begin{equation}\label{pA}
  \mathcal A:=\{x\in M:\ h^\infty(x,x)=0\}.
\end{equation}

\begin{prop}\label{h>w}\cite[Proposition 3.6]{DZ10}.
The following properties hold
\begin{itemize}
\item[(i)] The Peierls barrier is finte valued and Lipschitz continuous.
\item[(ii)] If $w$ is a susolution of \eqref{E0}, then
\[h^\infty(x,y)\geqslant w(y)-w(x).\]
\item[(iii)] For each $x,y,z\in M$, the following triangle inequality holds
\[h^\infty(x,y)\leqslant h^\infty(x,z)+h^\infty(z,y).\]
\item[(iv)] The function $h^\infty(y,\cdot)$ gives a solution of \eqref{E0} for each $y\in M$. Similarly, $-h^\infty(\cdot,y)$ is a forward weak KAM solution and a viscosity subsolution of \eqref{E0} for each $y\in M$.
\end{itemize}
\end{prop}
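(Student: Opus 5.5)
The plan is to derive all four properties from the finite-time minimal action $h_t$, using three ingredients. The first is the uniform Lipschitz bound of Proposition \ref{htlip}. The second is the subsolution characterization $w\prec L+c_0$ of Proposition \ref{soleq}. The third is the elementary semigroup inequality
\[
h_{t+s}(x,y)\leqslant h_t(x,z)+h_s(z,y),
\]
which follows by concatenating curves. Throughout we use that \eqref{E0} admits a solution $u$, as recalled in the introduction.

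\emph{(ii) and (i).} For (ii), let $w$ be a subsolution. By Proposition \ref{soleq}, $w\prec L+c_0$. Applied to any curve from $x$ to $y$ on $[0,t]$, this gives $w(y)-w(x)\leqslant h_t(x,y)$, and taking $\liminf_{t\to+\infty}$ yields (ii). In particular, applying it to the solution $u$ gives the uniform lower bound $h_t\geqslant -2\|u\|_\infty$.

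For the upper bound, take the backward calibrated curve $\gamma_-$ of $u$ at $y$. Calibration and the definition of $h_t$ give
\[
h_t(\gamma_-(-t),y)\leqslant u(y)-u(\gamma_-(-t))\leqslant 2\|u\|_\infty .
\]
Concatenating with a unit-time minimizer from $x$ to $\gamma_-(-t)$ then gives $h_{t+1}(x,y)\leqslant \max h_1+2\|u\|_\infty$.

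Hence $\{h_t\}_{t>1}$ is uniformly bounded and, by Proposition \ref{htlip}, uniformly $\kappa$-Lipschitz. The $\liminf$ of such a family is finite and $\kappa$-Lipschitz, which proves (i).

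\emph{(iii).} Choose $t_n\to\infty$ with $h_{t_n}(x,z)\to h^\infty(x,z)$, and $s_n\to\infty$ with $h_{s_n}(z,y)\to h^\infty(z,y)$. The semigroup inequality gives
\[
h^\infty(x,y)\leqslant\liminf_n h_{t_n+s_n}(x,y)\leqslant h^\infty(x,z)+h^\infty(z,y).
\]

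\emph{(iv), subsolution part.} For any curve $\gamma:[0,\tau]\to M$ from $x$ to $x'$ we have $h_{t+\tau}(y,x')\leqslant h_t(y,x)+\int_0^\tau (L+c_0)$. Taking $\liminf_{t}$ gives $h^\infty(y,\cdot)\prec L+c_0$, so $h^\infty(y,\cdot)$ is a subsolution by Proposition \ref{soleq}.

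\emph{(iv), supersolution part.} Here I construct a backward calibrated curve. Fix $x$ and pick $t_n\to\infty$ with $h_{t_n}(y,x)\to h^\infty(y,x)$. Let $\gamma_n$ be Tonelli minimizers for $h_{t_n}(y,x)$, reparametrized on $[-t_n,0]$ so that $\gamma_n(0)=x$.

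The main obstacle is compactness of the $\gamma_n$. I will invoke the standard a priori bound for Tonelli minimizers on intervals of length at least $1$: their speed is bounded by a constant independent of the length (see \cite{Fathi08}). By Arzel\`a--Ascoli and a diagonal extraction, $\gamma_n\to\gamma$ locally uniformly on $(-\infty,0]$.

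For fixed $s>0$, minimality of $\gamma_n$ gives
\[
h_{t_n}(y,x)=h_{t_n-s}(y,\gamma_n(-s))+\int_{-s}^0 (L+c_0)(\gamma_n,\dot\gamma_n).
\]
Passing to the limit, we use the uniform Lipschitz bound on $h_t$ to replace $\gamma_n(-s)$ by $\gamma(-s)$ in the first term, and Lemma \ref{TM} for the integral. This yields
\[
h^\infty(y,x)\geqslant h^\infty(y,\gamma(-s))+\int_{-s}^0 (L+c_0)(\gamma,\dot\gamma).
\]
Combined with the subsolution inequality, this becomes an equality, so $\gamma$ is calibrated. Thus $h^\infty(y,\cdot)$ is a backward weak KAM solution, hence a viscosity solution by Proposition \ref{soleq}.

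The statement that $-h^\infty(\cdot,y)$ is a forward weak KAM solution and a subsolution follows symmetrically. One uses the reversed semigroup inequality for the domination $-h^\infty(\cdot,y)\prec L+c_0$, and forward limits of minimizers defined on $[0,t_n]$ for the calibrated curves.
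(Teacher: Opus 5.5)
Your proof is correct, but note that the paper gives no argument for this statement: it is quoted from \cite[Proposition 3.6]{DZ10}, which the authors list among references for merely continuous Hamiltonians. What you have written is therefore a self-contained alternative to the citation, and it is the standard Tonelli proof.
\begin{itemize}
\item The two-sided bound $-2\|u\|_\infty\leqslant h_t\leqslant \max h_1+2\|u\|_\infty$ for $t>1$, combined with Proposition \ref{htlip}, gives (i).
\item Domination plus a $\liminf$ gives (ii) and the subsolution half of (iv).
\item The concatenation inequality along two separate optimizing sequences gives (iii).
\item Limits of minimizers, controlled by Lemma \ref{TM} and the $\kappa$-Lipschitz bound (valid since $t_n-s>1$ eventually), give the calibrated curves.
\end{itemize}

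The one ingredient you take from outside the paper's preliminaries is the a priori speed bound for minimizers on intervals of length at least $1$. It is usually proved through energy conservation along the Euler--Lagrange flow, so it leans on the $C^2$ hypothesis.

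You can dispense with it.
\begin{itemize}
\item Pass to the limit along $t_n$ in $h_{t_n}(y,x)=\min_z\,[h_{t_n-s}(y,z)+h_s(z,x)]$, using only compactness of $M$ and Proposition \ref{htlip}. This yields $h^\infty(y,x)=\min_z\,[h^\infty(y,z)+h_s(z,x)]$ for each $s>1$.
\item A minimizing $z$ and a Tonelli minimizer for $h_s(z,x)$ then give an arc on $[-s,0]$ that is calibrated, by your domination inequality.
\item Iterating backward from $z$ and concatenating produces a calibrated curve on $(-\infty,0]$, with no diagonal extraction or speed bound needed.
\end{itemize}

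In short, your route buys a proof using only tools already present in the paper's smooth framework. The citation, or the dynamic-programming variant above, buys validity under weaker regularity of $H$.
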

Let $y\in\mathcal A$. The elementary solution associated with $y$ is given by $h^\infty(y,\cdot)$; see \cite[Section 4.2]{Ber}. By Proposition \ref{h>w} (ii), (iv), $h^\infty(y,\cdot)$ is the maximal subsolution $w$ of \eqref{E0} such that $w(y)=0$.

\begin{defn}
We say that a Borel probability measure $\tilde{\mu}$ on $TM$ is closed if
\begin{itemize}
\item [(1)] $\int_{TM}\|v\|_x\, d\tilde{\mu}(x,v)<+\infty$;
\item [(2)] for all function $f\in C^1(M)$, we have $\int_{TM}Df(v)\, d\tilde{\mu}(x,v)=0$.
\end{itemize}
\end{defn}

\begin{prop}\label{Mather}\cite[Theorem 5.7]{Da1}.
The following holds
\[\min_{\tilde{\mu}}\int_{TM}L(x,v)\, d\tilde{\mu}=-c_0,\]
where $\tilde{\mu}$ is taken among all closed measures on $TM$. Measures realizing the minimum are called Mather measures. We denote by $\widetilde{\mathfrak M}$ the set of all Mather measures. The Mather set is defined as
\[\widetilde{\mathcal M}=\overline{\bigcup_{\tilde{\mu}\in\widetilde{\mathfrak M}}{\rm supp}(\tilde{\mu})}.\]
The projected Mather set is $\mathcal M=\pi(\widetilde{\mathcal M})$.
\end{prop}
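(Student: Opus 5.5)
The identity combines a lower bound, valid for every closed measure and coming from smooth critical subsolutions, with the construction of a closed measure that attains it, built from a calibrated curve of a critical solution. The main obstacle is a uniform velocity bound for calibrated curves.

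\emph{Lower bound.} By Proposition \ref{h>w} (iv), $w:=h^\infty(y,\cdot)$ is a solution, hence a subsolution, of \eqref{E0} for any $y\in M$. By Proposition \ref{soleq} it is Lipschitz and satisfies $H(x,Dw)\leqslant c_0$ almost everywhere. Proposition \ref{ue}, applied with $G(x,p,u)=H(x,p)$, gives for each $\varepsilon>0$ a function $w_\varepsilon\in C^\infty(M)$ with $H(x,Dw_\varepsilon(x))\leqslant c_0+\varepsilon$ for all $x$. The Fenchel inequality then yields
\[
L(x,v)\geqslant Dw_\varepsilon(x)\cdot v-H(x,Dw_\varepsilon(x))\geqslant Dw_\varepsilon(x)\cdot v-c_0-\varepsilon .
\]
Let $\tilde\mu$ be a closed measure. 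The right-hand side is $\tilde\mu$-integrable by condition (1), since $Dw_\varepsilon$ is bounded. Integrating against $\tilde\mu$ and using condition (2) with $f=w_\varepsilon$ gives $\int L\,d\tilde\mu\geqslant -c_0-\varepsilon$. Letting $\varepsilon\to0$ shows the infimum over closed measures is at least $-c_0$.

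\emph{Attainment.} By Proposition \ref{soleq}, $u:=h^\infty(y,\cdot)$ is a backward weak KAM solution. Fix $x$ and a $(u,L,c_0)$-calibrated curve $\gamma:(-\infty,0]\to M$ with $\gamma(0)=x$. For $T>0$, define the probability measure $\tilde\mu_T$ on $TM$ by
\[
\int F\,d\tilde\mu_T=\frac1T\int_{-T}^0F(\gamma(s),\dot\gamma(s))\,ds .
\]
Calibration gives
\[
\int L\,d\tilde\mu_T=\frac{u(x)-u(\gamma(-T))}{T}-c_0 ,
\]
and for every $f\in C^1(M)$,
\[
\int Df\cdot v\,d\tilde\mu_T=\frac{f(x)-f(\gamma(-T))}{T}.
\]
Both error terms are $O(1/T)$ because $M$ is compact.

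\emph{Velocity bound (main obstacle).} To pass to the limit I need a uniform bound $\|\dot\gamma\|\leqslant R$. Every piece of a calibrated curve minimizes the action between its endpoints. By Tonelli regularity for $C^2$ Lagrangians satisfying (L1)--(L2), it is therefore a $C^2$ solution of the Euler--Lagrange equation, so the energy $E(x,v)=\partial_vL(x,v)\cdot v-L(x,v)$ is constant along it. To bound this energy, take a unit time interval. Calibration and the Lipschitz bound on $u$ show that $\int L$ over this interval is bounded by $\mathrm{Lip}(u)\,\mathrm{diam}(M)+|c_0|$. Hence some time $s_0$ in it satisfies $L(\gamma(s_0),\dot\gamma(s_0))\leqslant C$. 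Superlinearity (L2) then bounds $\|\dot\gamma(s_0)\|$, which bounds the energy. Since $E$ is proper by superlinearity, the constant energy bounds $\|\dot\gamma\|$ for all times.

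\emph{Passing to the limit.} With the velocity bound, every $\tilde\mu_T$ is supported in the compact set $K=\{\|v\|_x\leqslant R\}$. By Prokhorov's theorem, some subsequence $\tilde\mu_{T_n}$ converges weakly to a probability measure $\tilde\mu$ supported in $K$. The functions $L$ and $(x,v)\mapsto Df(x)\cdot v$ are continuous on $K$, so the two identities above pass to the limit. They give $\int Df\cdot v\,d\tilde\mu=0$ for all $f\in C^1(M)$, so $\tilde\mu$ is closed (condition (1) holds since its support is compact). They also give $\int L\,d\tilde\mu=-c_0$. Hence the minimum over closed measures is attained and equals $-c_0$.
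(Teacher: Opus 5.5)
Your proof is correct. The paper gives no argument for this statement; it quotes it from \cite{Da1}, which works with merely continuous, convex, coercive Hamiltonians. The useful comparison is therefore with the paper's own analogous steps, and your proof is essentially built from them. Your lower bound (smooth a critical subsolution by Proposition~\ref{ue}, then apply Fenchel's inequality and closedness) is the mechanism of Lemma~\ref{geqA}. Your attainment step is Lemma~\ref{mu*M} with $\lambda=0$: closedness and the value $-c_0$ come from boundary terms of order $1/T$. Lemma~\ref{mu*M} uses the present proposition only to know that $-c_0$ is the minimum, and your lower bound supplies exactly that.

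The one real divergence is the velocity bound, where you work harder than necessary. Tonelli regularity plus conservation of energy is valid under (H1)--(H2), but the argument of Lemma~\ref{gameq} gives the bound directly. If $u$ is $\kappa$-Lipschitz and $L(x,v)\geqslant(\kappa+1)\|v\|_x+C_{\kappa+1}$, then for $s<t\leqslant 0$
\[
\kappa\, d(\gamma(t),\gamma(s))\geqslant\int_s^t\bigl[L(\gamma,\dot\gamma)+c_0\bigr]\,d\tau\geqslant(\kappa+1)\,d(\gamma(t),\gamma(s))+(C_{\kappa+1}+c_0)(t-s).
\]
So $\gamma$ is Lipschitz with constant $\max\{0,-(C_{\kappa+1}+c_0)\}$, and $\|\dot\gamma\|$ is bounded almost everywhere. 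That is all you need for every $\tilde\mu_T$ to be concentrated on a fixed compact subset of $TM$. This route uses no regularity of calibrated curves and no Euler--Lagrange flow. It therefore carries over to the continuous setting of \cite{Da1} and to the generalization mentioned in the introduction, where your energy argument is unavailable. Your route gives in addition that calibrated curves are $C^2$ extremals of constant energy, which is not needed here.
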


\begin{prop}\cite[Proposition 3.13]{Z}.
$\mathcal M\subseteq \mathcal A$.
\end{prop}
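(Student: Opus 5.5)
The plan is to use a duality argument: pair a Mather measure with a critical subsolution that is \emph{strict} outside $\mathcal A$, and read off the support of the measure from the resulting equality. Concretely, I would aim to construct a function $w\in C^1(M)$ with
\[
H(x,Dw(x))\leqslant c_0\ \ \forall x\in M,\qquad H(x,Dw(x))<c_0\ \ \forall x\in M\setminus\mathcal A .
\]
Given such a $w$, take any $\tilde\mu\in\widetilde{\mathfrak M}$. The Fenchel inequality gives $L(x,v)\geqslant Dw(x)\cdot v-H(x,Dw(x))$. Since $\tilde\mu$ is closed, $\int Dw\cdot v\,d\tilde\mu=0$, and Proposition \ref{Mather} then yields
\[
0=\int_{TM}\bigl(L(x,v)+c_0\bigr)\,d\tilde\mu\geqslant\int_{TM}\bigl(c_0-H(x,Dw(x))\bigr)\,d\tilde\mu\geqslant 0 .
\]
Hence $c_0-H(x,Dw(x))=0$ for $\tilde\mu$-a.e.\ $(x,v)$. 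Because $x\mapsto c_0-H(x,Dw(x))$ is continuous and nonnegative, it must vanish on $\pi(\operatorname{supp}\tilde\mu)$, which therefore lies in $\mathcal A$. Finally, $\mathcal A$ is closed, being the zero set of the continuous function $x\mapsto h^\infty(x,x)$ (Proposition \ref{h>w}(i)). Taking the union over all Mather measures and then the closure therefore gives $\mathcal M\subseteq\mathcal A$.

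The construction of $w$ has two parts, local and global. For the \textbf{local part}, fix $y\notin\mathcal A$, so $h^\infty(y,y)>0$. I want a critical subsolution $w_y$ that is strict on a neighborhood $U_y$ of $y$. The natural candidate is built from the barrier, for instance
\[
w_y=\min\{h^\infty(y,\cdot),\ -h^\infty(\cdot,y)+h^\infty(y,y)\}
\]
or a variant of it. Both pieces are subsolutions by Proposition \ref{h>w}(iv), so their minimum is again a subsolution by convexity. Strictness near $y$ should follow from the standard Fathi--Siconolfi argument. If $h^\infty(y,\cdot)$ were not strict near $y$, one could build calibrated curves through points near $y$ and concatenate them into loops at $y$ with asymptotically zero action, forcing $h^\infty(y,y)=0$, a contradiction.

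For the \textbf{global part}, cover $M\setminus\mathcal A$ by countably many such $U_{y_n}$. I normalize the $w_{y_n}$, which are equi-Lipschitz by Proposition \ref{htlip} and Proposition \ref{h>w}(i), and set $\bar w=\sum_n 2^{-n}w_{y_n}$. Convexity of $H$ in $p$ makes $\bar w$ a subsolution. It is strict, in the almost-everywhere sense, near every point of $M\setminus\mathcal A$, with a strictness margin that is locally uniform.

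The last step is regularization to obtain a $C^1$ function. Proposition \ref{ue} alone only gives $H(x,Dw_\varepsilon)\leqslant c_0+\varepsilon$, which loses the global inequality. To repair this, I would apply the mollification locally, with $\varepsilon$ depending on the strictness margin, via a partition of unity subordinate to the cover, in the manner of Fathi--Siconolfi/Bernard. This keeps $H\leqslant c_0$ everywhere and $H<c_0$ off $\mathcal A$.

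A lighter alternative avoids the $C^1$ requirement entirely. Run the pairing argument with Lipschitz $\bar w$ smoothed by Proposition \ref{ue} with error $\varepsilon$, at the cost of an extra $\varepsilon$ on each side. One then concludes $\int(c_0-H(x,D\bar w))^+\,d\tilde\mu\lesssim\varepsilon$, provided the strictness is stable under smoothing on compact subsets of $M\setminus\mathcal A$. Letting $\varepsilon\to0$ gives the same conclusion. I expect the main obstacle to be the local strictness claim for $y\notin\mathcal A$; the globalization and smoothing are routine once it holds.
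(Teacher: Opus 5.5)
The paper gives no proof of this statement; it only quotes \cite[Proposition 3.13]{Z}. Your overall scheme is the standard Fathi--Siconolfi route: pair a Mather measure with a critical subsolution that is strict off $\mathcal A$. The pairing computation, the closedness of $\mathcal A$, the convex-combination globalization and the $\varepsilon$-smoothing variant are all sound. The gap is in the local step, and it is not a missing detail: your candidate $w_y$ can never be strict.

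By Proposition \ref{h>w}(iv), $h^\infty(y,\cdot)$ is a viscosity solution of \eqref{E0} on all of $M$ for \emph{every} $y$, including $y\notin\mathcal A$. Hence $H(x,D_xh^\infty(y,x))=c_0$ at each point of differentiability. The same holds for the forward weak KAM solution $-h^\infty(\cdot,y)$: at a differentiability point, the forward calibrated curve with initial velocity $v$ gives $Du(x)\cdot v=L(x,v)+c_0$, and Fenchel's inequality then forces $H(x,Du(x))=c_0$. Two Lipschitz functions have the same gradient a.e.\ on their coincidence set, so your $w_y$ satisfies $H(x,Dw_y(x))=c_0$ for a.e.\ $x\in M$. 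It is therefore strict on no open set.

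Concretely, take $\mathbb S^1$ with $H=\tfrac12 p^2-U$, where $U\geqslant 0$ vanishes only at $0$. One finds $w_y=d_J(y,0)-d_J(\cdot,0)$, with $d_J$ the distance for the metric $\sqrt{2U}\,|dx|$, and $H(x,Dw_y)=0=c_0$ a.e. For the same reason, your sentence ``if $h^\infty(y,\cdot)$ were not strict near $y$'' has a false premise. Applied to $h^\infty(y,\cdot)$, the loop argument only produces loops at $y$ of action close to $h^\infty(y,y)>0$, which contradicts nothing. The barrier has already discarded the short loops that carry the information $y\notin\mathcal A$.

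The missing idea is to use the Ma\~n\'e potential $\Phi_y:=\inf_{t>0}h_t(y,\cdot)$ instead. It is a critical subsolution with $\Phi_y(y)=0$, and it is \emph{not} a supersolution at $y$ when $y\notin\mathcal A$. Otherwise it would be a backward weak KAM solution (Proposition \ref{soleq}). Concatenating a near-optimal path from $y$ to $\gamma(-t)$ with a calibrated curve $\gamma|_{[-t,0]}$ ending at $y$ would then give loops at $y$ of duration at least $t$ and action tending to $\Phi_y(y)=0$, so $h^\infty(y,y)=0$. This is exactly the argument you had in mind, applied to the right function.

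Consequently there is $\phi\in C^1$ with $\phi\leqslant\Phi_y$ near $y$, $\phi(y)=0$ and $H(y,D\phi(y))<c_0$. Take $r$ small, $0<\delta'<\delta r^2$, and set $\psi=\phi-\delta\, d(\cdot,y)^2+\delta'$. Let $w_y$ equal $\max\{\Phi_y,\psi\}$ on $B(y,r)$ and $\Phi_y$ outside. Then $w_y$ is a critical subsolution that coincides with the strict $C^1$ function $\psi$ near $y$. With this $w_y$ the rest of your proof goes through. In fact no globalization is needed: a single $w_y$, mollified in charts, already gives $\mu$-measure zero to a neighborhood of $y$ through your $\varepsilon$-pairing.
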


\begin{prop}\label{uniqueM}
If two solutions of \eqref{E0} coincide on $\mathcal M$, they coincide on $M$.
\end{prop}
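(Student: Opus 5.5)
The plan is to show that every solution of \eqref{E0} is determined by its values on $\mathcal M$ through the Peierls barrier. This goes in two steps. First, a representation formula on the Aubry set,
\[
u(x)=\min_{y\in\mathcal A}\bigl(u(y)+h^\infty(y,x)\bigr),\qquad x\in M .
\]
Second, a reduction from $\mathcal A$ to $\mathcal M$ using static classes.

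\emph{Step 1: the representation formula.} The inequality $u(x)\leqslant u(y)+h^\infty(y,x)$ for every $y$ is Proposition \ref{h>w}(ii), since $u$ is a subsolution. For the reverse inequality, fix $x$. By Proposition \ref{soleq}, $u$ is a backward weak KAM solution, so there is a calibrated curve $\gamma_-:(-\infty,0]\to M$ with $\gamma_-(0)=x$. Choose $t_n\to+\infty$ with $\gamma_-(-t_n)\to y$. Calibration gives
\[
u(x)-u(\gamma_-(-t_n))=\int_{-t_n}^0\bigl[L+c_0\bigr]\,ds\geqslant h_{t_n}(\gamma_-(-t_n),x).
\]
The uniform Lipschitz bound of Proposition \ref{htlip} lets us replace $\gamma_-(-t_n)$ by $y$ at cost $\kappa\,d(\gamma_-(-t_n),y)\to0$. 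Taking $\liminf$ then yields $u(x)\geqslant u(y)+h^\infty(y,x)$.

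It remains to check that $y\in\mathcal A$. Pass to a subsequence with $t_{n+1}-t_n\to+\infty$. Applying the same calibration on $[-t_{n+1},-t_n]$ and the Lipschitz estimate gives
\[
h_{t_{n+1}-t_n}(y,y)\leqslant u(\gamma_-(-t_n))-u(\gamma_-(-t_{n+1}))+o(1)\to 0,
\]
hence $h^\infty(y,y)\leqslant 0$. The reverse bound $h^\infty(y,y)\geqslant 0$ follows from Proposition \ref{h>w}(ii) applied to any subsolution. So $y\in\mathcal A$, which proves the formula.

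\emph{Step 2: reduction to $\mathcal M$.} Let $y\in\mathcal A$. Each static class contains a point of $\mathcal M$ (as recalled in the introduction), so there is $z\in\mathcal M$ with $h^\infty(z,y)+h^\infty(y,z)=0$. For any subsolution $u$, Proposition \ref{h>w}(ii) gives $u(y)-u(z)\leqslant h^\infty(z,y)$ and $u(z)-u(y)\leqslant h^\infty(y,z)=-h^\infty(z,y)$. Hence $u(y)=u(z)+h^\infty(z,y)$, so $u|_{\mathcal A}$ is determined by $u|_{\mathcal M}$.

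Now let $u_1,u_2$ be solutions that agree on $\mathcal M$. By Step 2 they agree on $\mathcal A$, and by the formula of Step 1 they agree on all of $M$.

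The main obstacle is Step 1, and specifically showing that the $\alpha$-limit point $y$ lies in $\mathcal A$ rather than merely somewhere in $M$. Everything hinges on the uniform Lipschitz estimate of Proposition \ref{htlip} being valid for the long time gaps $t_{n+1}-t_n$, which is why the subsequence is chosen with $t_{n+1}-t_n\to+\infty$. If one prefers to avoid static classes in Step 2, an alternative is to show directly that the $\alpha$-limit of a calibrated curve meets $\mathcal M$, via the standard Krylov--Bogoliubov construction of a closed minimizing measure from time averages along $\gamma_-$. I would keep that only as a fallback.
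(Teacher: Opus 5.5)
Your proof is correct. The paper gives no proof of Proposition \ref{uniqueM}; it quotes it as a standard fact of weak KAM theory (\cite{Fathi08}), so the relevant comparison is with the textbook argument.

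\textbf{The textbook route.} It goes directly to $\mathcal M$. Take a $u_1$-calibrated curve $\gamma_-$ with $\gamma_-(0)=x$, and build a Mather measure supported in the $\alpha$-limit set of $\gamma_-$. This can be an invariant measure on the $\alpha$-limit set of $(\gamma_-,\dot\gamma_-)$ under the Euler--Lagrange flow, or the Krylov--Bogoliubov time averages of your fallback; the latter is the same device the paper uses in Lemma \ref{mu*M} with $\lambda=0$. Then pick $t_n\to+\infty$ with $\gamma_-(-t_n)\to y\in\mathcal M$. Comparing calibration for $u_1$ with domination for $u_2$ gives $u_1(x)-u_1(y)\geqslant u_2(x)-u_2(y)$, hence $u_1\geqslant u_2$, and the reverse inequality follows symmetrically.

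\textbf{Your route.} You work with the barrier instead. Step 1 uses only Propositions \ref{soleq}, \ref{htlip} and \ref{h>w}. It proves that $\alpha$-limit points of calibrated curves lie in $\mathcal A$ and yields $u(x)=\min_{y\in\mathcal A}(u(y)+h^\infty(y,x))$. This is the stronger fact that $\mathcal A$ is itself a uniqueness set. Step 2 is exactly Lemma \ref{h=w} of the paper.

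\textbf{Trade-offs.} Your route gives an explicit representation formula and never touches the flow. However, the passage from $\mathcal A$ to $\mathcal M$ rests entirely on the quoted fact that every static class meets $\mathcal M$. The usual proof of that fact is itself an invariant-measure argument of the same type: an orbit in the Aubry set stays in its static class, and its limit set carries a minimizing measure. So the dynamical input is outsourced, not avoided.

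Your fallback makes that input explicit, using tools already in the paper. It also makes both the check $y\in\mathcal A$ and Step 2 unnecessary. It is therefore arguably the most economical self-contained proof in this setting.
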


Let $\tilde{\mu}$ be a Mather measure. The associated projected Mather measure $\mu$ is defined by
\[\int_M f(x)\, d\mu(x)=\int_{TM}f(\pi(x,v))\, d\tilde{\mu}(x,v),\quad \forall f\in C(M).\]
Throughout this paper, we will denote by $\tilde\mu$ a probability measure on $TM$, and denote by $\mu$ a probability measure defined on $M$.

\subsection*{Results for contact H-J equations}

Finally, we collect several results from \cite{n2} that will be used in the proof of Theorem \ref{thm1}. Assume that $a\in C(M)$ and that there exist two points $x_1,x_2\in M$ such that $a(x_1)>0$ and $a(x_2)<0$. Let $\varphi\in C(M)$ and $c\in\mathbb R$. We define the solution semigroup $T_t^-:C(M)\to C(M)$ by
\begin{equation}\label{T-}
  T^-_t\varphi(x)=\inf_{\gamma(t)=x} \left\{\varphi(\gamma(0))+\int_0^t\bigg[L(\gamma(\tau),\dot{\gamma}(\tau))+c- a(\gamma(\tau))T^-_\tau\varphi(\gamma(\tau))\bigg]\, d\tau\right\},
\end{equation}
where the infimum is taken among absolutely continuous curves $\gamma:[0,t]\rightarrow M$ with $\gamma(t)=x$. Define the corresponding forward semigroup as
\begin{equation}\label{T+}
  T^+_t\varphi(x)=\sup_{\gamma(0)=x} \left\{\varphi(\gamma(t))-\int_0^t\bigg[L(\gamma(\tau),\dot{\gamma}(\tau))+c- a(\gamma(\tau))T^+_{t-\tau}\varphi(\gamma(\tau))\bigg]\, d\tau\right\}.
\end{equation}
\begin{lem}\label{n11} \cite[Lemma 5.4]{n2}.
If there is a strict subsolution $u_0$ of
\begin{equation}\label{hjj0}
  a(x)u(x)+H(x,Du(x))=c\quad \textrm{in}\quad M,
\end{equation}
then the following limit exists
\[u_-=\lim_{t\to+\infty}T^-_t u_0,\]
and $u_-$ is the maximal solution of \eqref{hjj0}. Also, the following limit exists
\[v_+=\lim_{t\to+\infty}T^+_tu_0,\]
and $v_+$ is the minimal forward weak KAM solution of \eqref{hjj0}. Moreover, $v_+<u_-$.
\end{lem}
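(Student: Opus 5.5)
The plan is to treat $u_t:=T^-_tu_0$ as a monotone, bounded, equi-Lipschitz family. We then pass to the limit to get a solution, identify it as the maximal one by a comparison-type argument, and handle $T^+_t$ symmetrically.

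\emph{Step 1 (monotonicity).} Since $u_0$ is a subsolution of \eqref{hjj0}, Proposition \ref{soleq} gives $u_0\prec L-au_0+c$. Plugging this into the implicit variational formula \eqref{T-}, and using the order-preserving property of $T^-_t$ (which holds for the implicit semigroup by a Gronwall-type argument, as in \cite{n2}), gives $u_0\leqslant T^-_su_0$ for every $s\geqslant0$. The semigroup property then yields $T^-_{t+s}u_0\geqslant T^-_tu_0$, so $t\mapsto u_t$ is nondecreasing.

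\emph{Step 2 (uniform bounds).} Tonelli compactness and the Lipschitz dependence of $L-au+c$ on $u$ give equi-Lipschitz estimates for $u_t$, $t\geqslant1$, on any time interval where $u_t$ is bounded. An a priori Lipschitz bound that depends only on bounds of the data shows that the oscillation of $u_t$ stays bounded. Suppose, for contradiction, that $\sup_M u_t\to+\infty$. Then $u_t\to+\infty$ uniformly, in particular near $x_1$, where $a(x_1)>0$. Use the constant curve at $x_1$ on $[t,t+s]$ in \eqref{T-}:
\[
u_{t+s}(x_1)\leqslant u_t(x_1)+\int_0^s\big[L(x_1,0)+c-a(x_1)u_{t+\tau}(x_1)\big]\,d\tau .
\]
For $t$ large the integrand is negative, so $u_{t+s}(x_1)<u_t(x_1)$, contradicting Step 1. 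Hence $u_t$ is bounded above, and it is bounded below by $u_0$. Dini's theorem then gives uniform convergence of the monotone, equi-Lipschitz family to some $u_-$. This limit is a fixed point of $T^-_s$, so it is a backward weak KAM solution, hence a viscosity solution by Proposition \ref{soleq}.

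\emph{Step 3 (maximality).} Let $u$ be any solution of \eqref{hjj0}. By Proposition \ref{supsub}, $\max\{u,u_0\}$ is a subsolution. Steps 1--2 apply to it, so $T^-_t\max\{u,u_0\}$ increases to a solution $\bar u\geqslant\max\{u,u_0\}\geqslant u$. It remains to show $\bar u=u_-$. The natural route is a comparison on the class of solutions lying above the strict subsolution $u_0$. First, at a maximum point of $\bar u-u_-$, use the strict inequality $au_0+H(x,Du_0)\leqslant c-\delta$ to control the sign of $a(\bar u-u_-)$. Second, propagate along the backward calibrated curves of $\bar u$, which must end up where the strict subsolution forces a gap. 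I expect this to be \textbf{the main obstacle}, because comparison genuinely fails without monotonicity. The first thing to try is to compare the two increasing orbits $T^-_tu_0\leqslant T^-_t\max\{u,u_0\}$ together with the contraction estimate furnished by the $\delta$-strictness, aiming to show that their difference decays as $t\to+\infty$.

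\emph{Step 4 (forward semigroup and strict inequality).} The same argument applied to $T^+_t$ in \eqref{T+}, with all inequalities reversed, gives $T^+_tu_0$ nonincreasing in $t$. For the lower bound, use the constant curve at $x_2$, where $a(x_2)<0$. This yields the minimal forward weak KAM solution $v_+=\lim_{t\to+\infty}T^+_tu_0\leqslant u_0$. Finally, strictness of $u_0$ makes the first step of either orbit strictly move away from $u_0$: for every $s>0$ we have $T^-_su_0>u_0$ and $T^+_su_0<u_0$. Combining these gives $v_+\leqslant T^+_su_0<u_0<T^-_su_0\leqslant u_-$, that is, $v_+<u_-$.
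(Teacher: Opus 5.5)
The paper takes this lemma from \cite[Lemma 5.4]{n2} without proof, so there is no in-paper argument to compare with. Judged on its own, your proposal has a genuine gap at its core. Steps 1--2 produce a solution $u_-\geqslant u_0$. In fact $u_-$ is the \emph{minimal} solution above $u_0$, since any solution $u\geqslant u_0$ satisfies $u=T^-_tu\geqslant T^-_tu_0$. But the claim that $u_-$ is the \emph{maximal} solution is never proved. Step 3 only describes what you would try, and Step 4 asserts minimality of $v_+$ among forward weak KAM solutions with no argument.

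Moreover, the route you sketch points the wrong way. At a maximum point of $\bar u-u_-$ both functions are solutions, so the strictness of $u_0$ gives no sign information there. Along a backward calibrated curve $\gamma$ of the \emph{larger} solution $\bar u$, you only get $\frac{d}{ds}(\bar u-u_-)(\gamma(s))\geqslant -a(\gamma(s))\,(\bar u-u_-)(\gamma(s))$. This bounds $\bar u-u_-$ from below at the endpoint, not from above.

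The missing idea is a weighted estimate along calibrated curves of $u_-$ itself, where strictness makes the discount weight integrable. Let $\gamma:(-\infty,0]\to M$ be $u_-$-calibrated with $\gamma(0)=x$. Put $f(s)=e^{-\int_s^0a(\gamma(r))\,dr}$, so $f'=a(\gamma)f$, and put $v=(u_--u_0)\circ\gamma\geqslant0$. If $au_0+H(x,Du_0)\leqslant c-\delta$, then $\dot v\geqslant\delta-a(\gamma)v$ a.e. Hence $(vf)'\geqslant\delta f$, and
\[
\delta\int_{-T}^0f(s)\,ds\leqslant v(0)-v(-T)f(-T)\leqslant\|u_--u_0\|_\infty\qquad\text{for all }T>0 .
\]
Since $|(\log f)'|\leqslant\|a\|_\infty$, integrability forces $f(-t)\to0$ as $t\to+\infty$.

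Now let $u$ be \emph{any} subsolution of \eqref{hjj0}. Then $w=(u-u_-)\circ\gamma$ satisfies $\dot w\leqslant-a(\gamma)w$. So $w(0)\leqslant w(-t)f(-t)\leqslant\|u-u_-\|_\infty f(-t)\to0$, that is, $u(x)\leqslant u_-(x)$. This gives maximality directly, even among subsolutions, and the detour through $\max\{u,u_0\}$ is unnecessary. The mirror computation along forward $v_+$-calibrated curves uses the weight $e^{\int_0^sa(\gamma)}$ and $u_0-v_+\geqslant0$. It gives $v\geqslant v_+$ for every subsolution $v$, hence the minimality of $v_+$.

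A smaller point in Step 2: there is no a priori Lipschitz bound independent of $\|u_t\|_\infty$, because $H(x,Du_t)\leqslant c-a(x)u_t$ involves $u_t$. So the claimed bounded oscillation is unjustified, but the conclusion survives. Since the orbit is nondecreasing, each $u_t$ is a subsolution. The constant curve at $x_1$ then gives $u_t(x_1)\leqslant (L(x_1,0)+c)/a(x_1)$, and the Gronwall argument of Lemma \ref{bM} bounds $u_t$ from above on $M$. Similarly, $T^-_su_0>u_0$ for $s>0$ is true but needs a line, e.g.\ comparison with $u_0+\delta' s$ for small $\delta'>0$.
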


\begin{lem}\label{n12}\cite[Theorem 3]{n2}.
Assume that there exists a strict subsolution $u_0$ of \eqref{hjj0}. Let $u_-$ and $v_+$ denote the maximal viscosity solution and the minimal forward weak KAM solution of \eqref{hjj0}, respectively. If $\varphi\in C(M)$ satisfies $\varphi>v_+$, then $T_t^-\varphi$ converges uniformly to $u_-$ as $t\to+\infty$.
\end{lem}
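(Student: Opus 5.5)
The plan is a sandwich argument built on the monotonicity of $T^-_t$ and on the convergence results of Lemma \ref{n11}. First I would record the basic properties of the implicit semigroup \eqref{T-} proved in \cite{n2}: the semigroup property $T^-_{t+s}=T^-_t\circ T^-_s$ and the order-preserving property ($\varphi\leqslant\psi$ implies $T^-_t\varphi\leqslant T^-_t\psi$). I would also use the analogous properties for $T^+_t$ and the duality inequality $T^-_s\circ T^+_s\psi\geqslant\psi$. I would prove the latter by taking a maximizing curve for $T^+_s\psi(x)$ as a competitor in the infimum defining $T^-_s$. The nonlocal terms $a\,T^\mp_\tau$ are handled with a Gronwall-type comparison along that curve.

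\textbf{Lower bound.} By Lemma \ref{n11}, $T^+_s u_0\to v_+$ uniformly. Since $\varphi>v_+$ and both are continuous on the compact manifold $M$, there is $s_0$ such that $\varphi\geqslant T^+_{s_0}u_0$. Monotonicity and duality then give, for every $t\geqslant 0$,
\[
T^-_{t+s_0}\varphi\geqslant T^-_t\bigl(T^-_{s_0}T^+_{s_0}u_0\bigr)\geqslant T^-_t u_0 .
\]
Letting $t\to+\infty$ and using Lemma \ref{n11} yields $\liminf_{t\to+\infty}T^-_t\varphi\geqslant u_-$ uniformly on $M$.

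\textbf{Upper bound.} I would next show that $\{T^-_t\varphi\}_{t\geqslant 1}$ is uniformly bounded and equi-Lipschitz. Boundedness from below comes from the lower bound above. For boundedness from above, I would compare with $T^-_t$ applied to a large constant, or to $u_-+K$, exploiting that on the region where $a>0$ a large value of the unknown pushes the action up. Equi-Lipschitz bounds then follow from the standard Tonelli/minimizer estimates once the values are bounded. By Arzel\`a--Ascoli the $\omega$-limit set is then nonempty and compact. I would show that every $\omega$-limit point is a viscosity solution of \eqref{hjj0}, using a Lyapunov-type functional as in the large-time analysis of \cite{n2}, together with the stability Proposition \ref{stability} applied to time-translates. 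Since $u_-$ is the maximal solution, every limit point is $\leqslant u_-$. Combined with the lower bound, every limit point equals $u_-$, which gives uniform convergence.

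The main obstacle is the upper bound, namely the a priori upper bound and the identification of $\omega$-limits as solutions. Because $a$ changes sign, there is no comparison principle and no monotone supersolution to dominate $T^-_t\varphi$. My first attempt would be to trap $\varphi$ below $T^-_{s}\psi$ for a suitable $\psi$ built from $u_-$ plus a constant, and to exploit that $T^-_t(u_-+K)$ is nonincreasing in $t$ when $u_-+K$ behaves as a supersolution along calibrated curves, which would give monotone convergence from above.
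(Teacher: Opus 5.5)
The paper gives no proof of this lemma; it is quoted from \cite[Theorem 3]{n2}, so your argument has to stand on its own. Your lower bound is correct: choosing $s_0$ with $\varphi\geqslant T^+_{s_0}u_0$ and using $T^-_{s_0}T^+_{s_0}u_0\geqslant u_0$ is exactly how the strict inequality $\varphi>v_+$ should enter.

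There is one slip in your justification of the duality inequality. Using a maximizer of $T^+_s\psi(x)$ as a competitor in the infimum defining $T^-_s$ only bounds that infimum from above; that is the argument for $T^+_sT^-_s\psi\leqslant\psi$. For $T^-_sT^+_s\psi\geqslant\psi$, proceed as follows:
\begin{itemize}
\item Take a minimizer $\gamma$ for $T^-_s(T^+_s\psi)(x)$.
\item Set $g(\tau)=T^-_\tau T^+_s\psi(\gamma(\tau))$, which satisfies the calibration identity along $\gamma$.
\item Set $f(\tau)=T^+_{s-\tau}\psi(\gamma(\tau))$, using pieces of $\gamma$ as competitors in the suprema.
\end{itemize}
Then $f(0)=g(0)$ and $(g-f)'\geqslant -a(\gamma)(g-f)$, so $g(s)\geqslant f(s)=\psi(x)$ whatever the sign of $a$.

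The genuine gap is the upper bound. Passing to the limit along time translates $T^-_{\cdot+t_n}\varphi$ with Proposition \ref{stability} only yields solutions of $\partial_tU+a(x)U+H(x,D_xU)=c$, not stationary ones. The Lyapunov functional that is supposed to force stationarity is never specified, and when $a$ takes both signs there is no evident candidate.

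Your fallback also fails. On $\{a<0\}$, at differentiability points of $u_-$, one has $a(u_-+K)+H(x,Du_-)=c+aK<c$. So $u_-+K$ is not a supersolution. Hence $T^-_t(u_-+K)\leqslant u_-+K$ cannot hold for all small $t$, since otherwise $u_-+K$ would be a supersolution. There is therefore no monotone convergence from above.

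The missing idea is that you never need $\omega$-limit points to be solutions, only to lie below $u_-$. Two facts give this.

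\emph{Step 1: a priori bounds and a stationary subsolution.} The bounds can be obtained as in Lemmas \ref{umaxstab}--\ref{bM}. At a point $x_1$ with $a(x_1)>0$, the constant curve gives $f(t_2)-f(t_1)\leqslant\int_{t_1}^{t_2}(C-a(x_1)f)$ for $f(t)=T^-_t\varphi(x_1)$. Then apply Gronwall along geodesics, using your lower bound. With these bounds, consider the upper half-relaxed limit $\bar u(x):=\limsup_{t\to+\infty,\,y\to x}T^-_t\varphi(y)$ of the time translates. By the Barles--Perthame argument it is a subsolution of the evolution equation that does not depend on $t$, hence a subsolution of \eqref{hjj0}. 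This uses only continuity of $(x,p,u)\mapsto a(x)u+H(x,p)$, so the sign of $a$ is irrelevant.

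\emph{Step 2: $u_-$ is the maximal subsolution, not just the maximal solution.} Let $w$ be a subsolution. Comparison for the Cauchy problem, which is available for Lipschitz dependence on $u$, gives $T^-_tw\geqslant w$, so $t\mapsto T^-_tw$ is nondecreasing. By the same a priori bounds it is bounded and equi-Lipschitz. Hence it converges uniformly to a fixed point of the semigroup, i.e.\ a solution $\geqslant w$, and maximality gives $w\leqslant u_-$.

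Taking $w=\bar u$ yields $\limsup_{t}T^-_t\varphi\leqslant u_-\leqslant\liminf_{t}T^-_t\varphi$. Equi-Lipschitz continuity then turns this into uniform convergence.
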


\section{Proof of Theorem \ref{thm1}}

We first recall the setting considered in \cite{DNYZ}, where $a(x)$ was assumed to satisfy
\begin{equation}\label{a>0}
  \int_{TM} a(x)\, d\tilde{\mu} > 0
\end{equation}
for all Mather measures $\tilde{\mu}$ associated with $H$. In the present paper, however, we allow $a(x)$ to change sign on different static classes in the Mather set, which introduces substantial new difficulties.

The first difficulty concerns the existence of solutions to \eqref{E}. For this reason, we introduce the additional term $-A\lambda$ in \eqref{E}. When condition \eqref{a>0} holds, the existence of solutions to \eqref{E} with $A=0$ was established in \cite{DNYZ}.

To prove the convergence of the maximal solution of \eqref{E}, we first rely on the large-time behavior described in Lemma \ref{n12}. This implies that any Mather measure associated with the maximal solution of \eqref{E} satisfies
\[\int_{TM} a(x)\, d\tilde{\mu} \geqslant 0,\]
as shown in Lemma \ref{mu*>0}. However, this information alone is not sufficient to conclude convergence. We therefore employ a dynamical argument in which the structure of static classes plays a crucial role. The key step is Lemma \ref{xi1}, from which Lemma \ref{e1e2} follows. Based on Lemma \ref{e1e2}, we adapt the method introduced in \cite{DNYZ} to prove the convergence of the maximal solution of \eqref{E} on $M_{i_0}$. Using the property established in Lemma \ref{eM1}, we then obtain convergence on the entire Mather set, and hence on the whole space $M$ by Proposition \ref{uniqueM}. Finally, in Lemma \ref{u0h}, we provide a representation of the limit of the maximal solution of \eqref{E} in terms of the Peierls barrier, showing that the selected critical solution coincides with the elementary solution associated with the static class $M_{i_0}$.

\medskip

We first show that \eqref{E} has the maximal solution.
\begin{lem}\label{v0sub}
The solution $v_0$ of \eqref{E0} is a strict viscosity subsolution of \eqref{E}.
\end{lem}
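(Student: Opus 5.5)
Since $v_0$ is a viscosity solution of \eqref{E0}, it is Lipschitz by Proposition \ref{soleq}, and $H(x,Dv_0(x))\leqslant c_0$ holds at almost every $x\in M$. The plan is to add the discount term pointwise and check that it stays uniformly below $A\lambda$; the viscosity statement then follows from the a.e. characterization.

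At every point of differentiability we write
\[
\lambda a(x)v_0(x)+H(x,Dv_0(x))-A\lambda\leqslant \lambda\bigl(a(x)v_0(x)-A\bigr)+c_0 .
\]
The bracket is controlled by
\[
a(x)v_0(x)-A\leqslant \|a\|_\infty\|v_0\|_\infty-A=:-\delta .
\]
By the choice of $A$ in Theorem \ref{thm1} we have $\delta>0$. Hence, for every fixed $\lambda>0$,
\[
G_\lambda(x,Dv_0(x),v_0(x))\leqslant c_0-\lambda\delta \quad\text{for almost every } x\in M .
\]

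The nonlinearity $G_\lambda(x,p,u)=\lambda(a(x)u-A)+H(x,p)$ is continuous and convex in $p$, and it is Lipschitz in $u$ with constant $\Theta=\lambda\|a\|_\infty$. Coercivity is inherited from (H2). So Proposition \ref{soleq} applies to $G_\lambda$, and the implication (iii)$\Rightarrow$(ii) converts the a.e. inequality into a viscosity one with the constant $\bar c=c_0-\lambda\delta<c_0$.

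This is exactly the definition of a strict subsolution of \eqref{E}. The argument is routine; the only care needed is to apply the a.e.-to-viscosity equivalence to $G_\lambda$ (where $u$ enters) rather than to $H$, and to note that the strictness gap $\lambda\delta$ is uniform in $x$.

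Once strictness is established, Lemma \ref{n11} will give the existence of the maximal solution $u_\lambda$. This uses the rescaling of \eqref{E} into the form \eqref{hjj0} with coefficient $\lambda a$ and constant $c_0+A\lambda$. It also needs $a$ to take both signs, which holds under assumption (a).
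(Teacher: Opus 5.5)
Your proposal is correct and follows the paper's proof. You bound $\lambda(a v_0-A)$ by $-\lambda(A-\|a\|_\infty\|v_0\|_\infty)$ at points of differentiability of $v_0$, then turn the a.e.\ inequality into a viscosity one via Proposition \ref{soleq}; your extra checks of that proposition's hypotheses for $G_\lambda$ and your remark on Lemma \ref{n11} are harmless additions.
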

\begin{proof}
Note that $A > \|a\|_\infty \|v_0\|_\infty$. A direct calculation then gives
\[\lambda a(x)v_0(x)+H(x,Dv_0)-A\lambda\leqslant c_0-(A-\|a\|_\infty\|v_0\|_\infty)\lambda<c_0\quad \text{a.e. in } M.\]
It follows from Proposition \ref{soleq} that $v_0$ is a strict viscosity subsolution of \eqref{E}.
\end{proof}

Let $T^\lambda_t$ and $T^{\lambda,+}_t$ be the semigroups defined in \eqref{T-} and \eqref{T+}, associated with
\[\bar L(x,v,u):=L(x,v)+c_0-\lambda a(x)u+A\lambda,\]
respectively. By Lemma \ref{n11}, the limit
\[u_\lambda=\lim_{t\to+\infty}T^\lambda_t v_0,\]
exists, and $u_\lambda$ is the maximal solution of \eqref{E}. Also, the limit
\[v^+_\lambda=\lim_{t\to+\infty}T^{\lambda,+}_tv_0,\]
exists, and $v^+_\lambda$ is the minimal forward weak KAM solution of \eqref{E}, with $v^+_\lambda < u_\lambda$. Moreover, Lemma \ref{n12} describes the following large-time behavior, which will play a central role in the forthcoming proof.

\begin{lem}\label{stab}
If $\varphi\in C(M)$ satisfies $\varphi>v^+_\lambda$, then $T^\lambda_t \varphi$ uniformly converges to $u_\lambda$ as $t\to+\infty$.
\end{lem}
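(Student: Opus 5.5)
This is a direct specialization of Lemma \ref{n12}, so the plan is to check that the hypotheses of that lemma hold for the data of \eqref{E} and then read off the conclusion.

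\textbf{Rewriting \eqref{E} in the form \eqref{hjj0}.} Equation \eqref{E} can be written as
\[
\lambda a(x)u(x)+\bigl(H(x,Du(x))-A\lambda\bigr)=c_0 .
\]
This is \eqref{hjj0} with the coefficient $a$ replaced by $\lambda a$ and the Hamiltonian replaced by $H-A\lambda$. The associated Lagrangian is then $L+A\lambda$, and the constant is $c_0$.

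\textbf{Matching the semigroups.} Under this identification, the semigroups \eqref{T-} and \eqref{T+} built from these data are exactly $T^\lambda_t$ and $T^{\lambda,+}_t$, which are defined through $\bar L$. To see this, note that
\[
L+A\lambda+c_0-\lambda a\,T^\lambda_\tau\varphi = \bar L\bigl(x,v,T^\lambda_\tau\varphi\bigr).
\]

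\textbf{Standing assumptions of Lemma \ref{n12}.} The results recalled from \cite{n2} require $a$ to take both signs. Hypothesis (a) gives $a>0$ on $M_{i_0}$ and $a<0$ on $\mathcal M\setminus M_{i_0}$. The latter set is nonempty by ($\diamond$), since $k>1$. Hence $\lambda a$ is positive at some point and negative at another for every $\lambda>0$. The remaining structural hypotheses also hold:
\begin{itemize}
\item the Lipschitz dependence in $u$ holds with constant $\Theta=\lambda\|a\|_\infty$;
\item coercivity in $p$ follows from (H2).
\end{itemize}

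\textbf{Strict subsolution.} Lemma \ref{v0sub} shows that $v_0$ is a strict subsolution of \eqref{E}. By Lemma \ref{n11}, the limit $u_\lambda=\lim_{t\to+\infty}T^\lambda_t v_0$ is the maximal solution of \eqref{E}. Likewise, $v^+_\lambda=\lim_{t\to+\infty}T^{\lambda,+}_t v_0$ is the minimal forward weak KAM solution.

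\textbf{Conclusion.} Lemma \ref{n12} now applies verbatim: for any $\varphi\in C(M)$ with $\varphi>v^+_\lambda$, $T^\lambda_t\varphi$ converges uniformly to $u_\lambda$ as $t\to+\infty$.

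The only point needing care is the bookkeeping in the identification step: the extra constant $-A\lambda$ must be absorbed into the Hamiltonian rather than into $a$, so that the semigroups in \eqref{T-}–\eqref{T+} really coincide with $T^\lambda_t$ and $T^{\lambda,+}_t$. There is no genuine analytic obstacle beyond this.
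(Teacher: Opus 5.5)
Your proposal is correct and matches the paper. The paper states Lemma \ref{stab} without a separate proof, as a direct specialization of Lemma \ref{n12} to \eqref{E}: it uses the strict subsolution $v_0$ from Lemma \ref{v0sub} and identifies $u_\lambda$ and $v^+_\lambda$ through Lemma \ref{n11}. Your bookkeeping is exactly the implicit check needed: you absorb $-A\lambda$ into the Hamiltonian (absorbing it into the constant $c=c_0+A\lambda$ works equally well), and you verify via (a) and ($\diamond$) that $\lambda a$ takes both signs.
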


In what follows, without loss of generality, we assume that $i_0 = 1$. We first show that the family $\{u_\lambda\}_{\lambda\in(0,1)}$ is uniformly bounded. Since $u_\lambda \geqslant v_0$, it suffices to establish a uniform upper bound for $u_\lambda$. As a first step, we prove that $u_\lambda$ is uniformly bounded from above on $M_1$, using the fact that $a(x) > 0$ on $M_1$.

\begin{lem}\label{umaxstab}
For $\lambda>0$, $u_\lambda$ is uniformly bounded from above on $M_1$.
\end{lem}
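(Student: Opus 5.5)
Since $u_\lambda$ is a viscosity solution of \eqref{E}, I plan to use its backward weak KAM representation from Proposition \ref{soleq}, integrated along curves that stay in the static class $M_1$. The key point is that on $M_1$ the discount coefficient is positive. Along a curve that remains in the Mather set the critical Lagrangian action is essentially zero, so the discount term should force $u_\lambda$ to be at most of order $A/\min_{M_1}a$ there.

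\textbf{Step 1: a Mather measure on $M_1$.} Since $M_1$ is a static class, it carries an ergodic Mather measure $\tilde\mu$ whose projection $\mu$ is supported in $M_1$. Its support consists of points of the Mather set that are invariant under the Euler--Lagrange flow $\phi_t$. For $(x,v)\in\operatorname{supp}\tilde\mu$, the projected orbit $\gamma(s)=\pi\phi_s(x,v)$ stays in $M_1$ for all $s\in\mathbb R$. Moreover it is calibrated for every critical subsolution. In particular, by Proposition \ref{h>w}(ii),(iv), $h^\infty(\gamma(t'),\gamma(t))=\int_{t'}^t[L+c_0]\,ds$, and these quantities are bounded by $\|h^\infty\|_\infty$ uniformly in $t',t$.

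\textbf{Step 2: the subsolution inequality along $\gamma$.} I will use condition (1) of Definition \ref{bws} with $\bar L$:
\[
u_\lambda(\gamma(t))-u_\lambda(\gamma(0))\leqslant \int_0^t\big[L+c_0\big]\,ds-\lambda\int_0^t a(\gamma(s))\,u_\lambda(\gamma(s))\,ds+A\lambda t.
\]
Set $a_1:=\min_{M_1}a>0$, which is positive by compactness of $M_1$ and continuity of $a$.

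Suppose $m:=\max_{M_1}u_\lambda$ is large, and let $\bar x\in M_1$ be a maximum point. I would rather avoid averaging arguments and work directly with a calibrated curve. By Step 1 there is a curve $\gamma\subset M_1$ with small action returning near $\bar x$. I will then apply the inequality above at times where $\gamma$ returns close to its start, using recurrence from Poincaré recurrence for $\tilde\mu$.

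Along such a return, the left-hand side is $\geqslant -\kappa\,\dist(\gamma(t),\gamma(0))$, using the uniform Lipschitz bound for $u_\lambda$. The action term tends to $h^\infty(\gamma(0),\gamma(0))=0$ along recurrent returns. This yields
\[
\lambda\int_0^t a\,u_\lambda\,ds\leqslant A\lambda t+o(1).
\]
Dividing by $\lambda t$ and letting $t\to\infty$ gives, via Birkhoff's theorem,
\[
\int a\,u_\lambda\,d\mu\leqslant A.
\]

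\textbf{Step 3: from the integral bound to a pointwise bound.} To pass to a pointwise statement I need $u_\lambda$ to be Lipschitz with a constant independent of $\lambda$ on $M_1$. That is not immediate, because the equation contains $\lambda a u_\lambda$ with $u_\lambda$ not yet bounded.

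\textbf{Main obstacle.} This oscillation control is the step I expect to be hardest. My first attempt will use the lower bound $u_\lambda\geqslant v_0$ together with the inequality from Step 2 along the curve itself. Since $a>0$ on $M_1$, if $u_\lambda\geqslant \|v_0\|_\infty$ along $\gamma$, the discount term is negative, and so $u_\lambda(\gamma(t))\leqslant u_\lambda(\gamma(0))+\|h^\infty\|_\infty+A\lambda t$.

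Reversing roles, starting from $\bar x$, the value at later times cannot drop much. Combined with Step 2 and $a\geqslant a_1$, this bounds $u_\lambda$ on $\operatorname{supp}\mu$ by roughly $(A+C)/a_1$.

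It remains to extend the bound from $\operatorname{supp}\mu$ to all of $M_1$. I plan to do this through the zero $d_H$-distance inside the static class: for $x,y\in M_1$ there are curves of arbitrarily long time and action close to $h^\infty(x,y)$, and along them the same comparison transfers the bound.
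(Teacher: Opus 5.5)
\textbf{There is a genuine gap in Step 3.} Your Step 2 is sound and amounts to Lemma \ref{leqA} at fixed $\lambda$. Do not invoke a uniform Lipschitz bound there: that would be circular, since Lemma \ref{s3} rests on the present lemma, and boundedness of $u_\lambda$ for fixed $\lambda$ suffices. However, the integral bound $\int a u_\lambda\,d\mu\leqslant A$ is not the content of the lemma. Step 3, which should turn it into a $\lambda$-uniform pointwise bound, fails as written, for four reasons.

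(i) Assuming only $u_\lambda\geqslant\|v_0\|_\infty$ along $\gamma$ removes $-\lambda a u_\lambda$ but leaves $+A\lambda t$. That term is unbounded on the long time intervals your recurrence/Birkhoff argument needs.

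(ii) The subsolution inequality bounds later values of $u_\lambda$ from above by earlier ones. Lower bounds starting from a maximizer $\bar x$ therefore propagate only backward in time, not to later times as you wrote.

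(iii) ``Combined with Step 2'' is not a mechanism. Birkhoff's theorem controls time averages only at $\mu$-generic points, and $\bar x$ need not be one, nor even lie in $\operatorname{supp}\mu$.

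(iv) The transfer from $\operatorname{supp}\mu$ to $M_1$ along curves of arbitrarily long time with action close to $h^\infty(x,y)$ fails. Such curves leave $M_1$ and may cross regions where $a\leqslant 0$ while $u_\lambda$ is not yet known to be bounded. They again carry $A\lambda t$ with $t\to+\infty$. The transfer is also unnecessary: since $M_1\subset\mathcal M$, every point of $M_1$ lies on a curve $\gamma:\mathbb R\to M_1$ calibrated for $v_0$, which is what the paper uses.

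\textbf{The missing idea} is the threshold $A/\min_{M_1}a$, which makes $A\lambda-\lambda a u_\lambda<0$, combined with comparison against $v_0$ along such a curve.

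The paper does this locally. Let $x_0$ maximize $u_\lambda-v_0$ over $M_1$. If $u_\lambda(x_0)>A/\min_{M_1}a$, then for small $t>0$ one gets $u_\lambda(x_0)-u_\lambda(\gamma(-t))<\int_{-t}^0[L+c_0]\,ds=v_0(x_0)-v_0(\gamma(-t))$ with $\gamma(-t)\in M_1$. This contradicts maximality, so $u_\lambda\leqslant 2\|v_0\|_\infty+A/\min_{M_1}a$ on $M_1$.

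Your own idea can also be repaired pointwise. Set $a_1=\min_{M_1}a$ and suppose $u_\lambda(\bar x)>A/a_1+2\|v_0\|_\infty$ for some $\bar x\in M_1$. A first-exit argument along the calibrated curve backward from $\bar x$ gives $u_\lambda(\gamma(-s))\geqslant u_\lambda(\bar x)-2\|v_0\|_\infty>A/a_1$ for all $s\geqslant 0$. Then
\[
u_\lambda(\bar x)-u_\lambda(\gamma(-t))\leqslant 2\|v_0\|_\infty-\lambda t\big(a_1(u_\lambda(\bar x)-2\|v_0\|_\infty)-A\big),
\]
and the right side tends to $-\infty$ as $t\to+\infty$. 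This contradicts the boundedness of $u_\lambda$ at fixed $\lambda$. Neither route needs Step 2, ergodic theory, or any oscillation estimate.
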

\begin{proof}
We are going to prove that the following maximum
\[\max_{x\in M_1}(u_\lambda(x)-v_0(x))=u_\lambda(x_0)-v_0(x_0)\]
is attained at $x_0\in M_1$, where $u_\lambda(x_0)\leqslant \frac{A}{\min_{x\in M_1}a(x)}$. Otherwise, we assume that $u_\lambda(x_0)>\frac{A}{\min_{x\in M_1}a(x)}$. Taking a $v_0$-calibrated curve $\gamma:\mathbb R\to M$ satisfying $\gamma(0)=x_0$. Since $M_1$ does not intersect $M_i$, $i\in\{2,\dots,k\}$, the image of the curve $\gamma$ is contained in $M_1$. By continuity, for $t_0>0$ small enough, we have $u_\lambda(\gamma(t))>\frac{A}{\min_{x\in M_1}a(x)}\geqslant \frac{A}{a(\gamma(t))}$ for all $t\in(-t_0,0)$. Thus,
\begin{align*}
u_\lambda(x_0)-u_\lambda(\gamma(-t))&\leqslant \int_{-t}^0 \bigg[L(\gamma(s),\dot\gamma(s))+c_0-\lambda a(\gamma(s))u_\lambda(\gamma(s))+A\lambda\bigg]\, ds
\\ &<\int_{-t}^0 [L(\gamma(s),\dot\gamma(s))+c_0]\, ds=v_0(x_0)-v_0(\gamma(-t)),
\end{align*}
which implies
\[u_\lambda(x_0)-v_0(x_0)<u_\lambda(\gamma(-t))-v_0(\gamma(-t)).\]
This gives a contradiction.

Therefore, for all $x\in M_1$, we have
\[u_\lambda(x)-v_0(x)\leqslant u_\lambda(x_0)-v_0(x_0)\leqslant \frac{A}{\min_{x\in M_1}a(x)}-v_0(x_0),\]
which gives a uniform upper bound $2\|v_0\|_\infty+\frac{A}{\min_{x\in M_1}a(x)}$ of $u_\lambda$ on $M_1$.
\end{proof}

Using the fact that $u_\lambda\prec \bar L$, we can show that $u_\lambda$ is bounded from above on the whole space $M$, once it is bounded from above at one point in $M$.

\begin{lem}\label{bM}
The family $\{u_\lambda\}_{\lambda\in (0,1)}$ is bounded from above on the whole $M$.
\end{lem}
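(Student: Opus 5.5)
Given Lemma \ref{umaxstab}, fix a point $y_0\in M_1$ and set $B:=2\|v_0\|_\infty+\frac{A}{\min_{M_1}a}$, so that $u_\lambda(y_0)\leqslant B$ for all $\lambda\in(0,1)$. The plan is to propagate this bound from $y_0$ to every $x\in M$ through the subsolution inequality $u_\lambda\prec \bar L$ of Proposition \ref{soleq}, applied along a fixed short curve from $y_0$ to $x$. The one difficulty is that $\bar L$ contains the term $-\lambda a(x)u$. Since $a$ may be negative, this term can work against an upper bound. We therefore need a Gronwall-type argument that keeps $\lambda\|a\|_\infty$ times the time length below a small constant.

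\textbf{Step 1 (choice of curves).} For any $x\in M$, take a geodesic $\gamma:[0,1]\to M$ with $\gamma(0)=y_0$, $\gamma(1)=x$ and $\|\dot\gamma\|\leqslant \mathrm{diam}(M)$. Then
\[
C_L:=\max_{\|v\|\leqslant \mathrm{diam}(M)}|L(x,v)|+|c_0|+A
\]
is finite and independent of $\lambda$ and $x$. For every $s\in[0,1]$, the inequality $u_\lambda\prec\bar L$ applied on $[0,s]$ gives
\[
u_\lambda(\gamma(s))\leqslant u_\lambda(y_0)+C_L s+\lambda\|a\|_\infty\int_0^s |u_\lambda(\gamma(\tau))|\,d\tau .
\]

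\textbf{Step 2 (Gronwall).} Since $u_\lambda\geqslant v_0\geqslant -\|v_0\|_\infty$, we have $|u_\lambda|\leqslant \max\{u_\lambda,0\}+\|v_0\|_\infty$. Set $f(s):=\max\{u_\lambda(\gamma(s)),0\}+\|v_0\|_\infty\geqslant 0$, which is continuous. Step 1 then yields
\[
f(s)\leqslant B+\|v_0\|_\infty+C_L+\|a\|_\infty\int_0^s f(\tau)\,d\tau, \qquad \lambda<1.
\]
Gronwall's inequality gives
\[
f(1)\leqslant \bigl(B+\|v_0\|_\infty+C_L\bigr)e^{\|a\|_\infty}.
\]
Hence
\[
u_\lambda(x)\leqslant \bigl(B+\|v_0\|_\infty+C_L\bigr)e^{\|a\|_\infty},
\]
and the bound is uniform in $x\in M$ and $\lambda\in(0,1)$.

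The expected main point is Step 2, specifically the need for a two-sided control of $|u_\lambda|$ so that Gronwall can be applied. The lower bound $u_\lambda\geqslant v_0$ supplies this, because $u_\lambda$ is the maximal solution and $v_0$ is a subsolution by Lemma \ref{v0sub}. The remaining ingredients are the uniform bound on the action of unit-time geodesics and $\lambda<1$, both routine.
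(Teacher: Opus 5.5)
Your proof is correct and follows the paper's route: you propagate the upper bound from a point of $M_1$ along a unit-time geodesic using $u_\lambda\prec\bar L$, then close with Gronwall. The only difference is how you handle the sign-indefinite term $-\lambda a u_\lambda$. The paper restricts to the last interval $(\sigma,1]$ on which $u_\lambda>K_1$ and applies Gronwall to $u_\lambda-K_1\geqslant 0$, so it needs no lower bound on $u_\lambda$. You instead use $u_\lambda\geqslant v_0$ to control $|u_\lambda|$; this inequality is cleanest to justify from the construction $u_\lambda=\lim_{t\to+\infty}T^\lambda_t v_0$ with $T^\lambda_t v_0\geqslant v_0$, rather than from maximality alone.
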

\begin{proof}
Picking an arbitrary point $x\in M$. Let $y\in M_1$. We take a geodesic $\alpha:[0,1]\to M$ with constant speed and connecting $y$ and $x$. We denote by $K_1$ the upper bound of $u_\lambda$ on $M_1$. If $u_\lambda(x)\leqslant K_1$, then we have obtained the upper bound of $u_\lambda$. Thus, we only need to consider the case $u_\lambda(x)>K_1$. Then by continuity, there exists $\sigma\in [0,1)$ such that $u_\lambda(\alpha(\sigma))=K_1$ and $u_\lambda(\alpha(s))>K_1$ for all $s\in (\sigma,1]$. For $s\in (\sigma,1]$, we have
\begin{equation*}
\begin{aligned}
  &u_\lambda(\alpha(s))-u_\lambda(\alpha(\sigma))
  \\ &\leqslant \int_\sigma^s \bigg[L(\alpha(\tau),\dot \alpha(\tau))+c_0-\lambda a(\alpha(\tau))u_\lambda(\alpha(\tau))+A\lambda\bigg]\, d\tau
  \\ &=\int_\sigma^s \bigg[L(\alpha(\tau),\dot \alpha(\tau))+c_0-\lambda a(\alpha(\tau))(u_\lambda(\alpha(\tau))-K_1)-\lambda a(\alpha(\tau))K_1+A\lambda\bigg]\, d\tau
  \\&\leqslant \max_{x\in M,|\dot x|\leqslant \textrm{diam}(M)}|L(x,\dot x)+c_0|+\|a\|_\infty K_1+A+\lambda\|a\|_\infty\int_\sigma^s (u_\lambda(\alpha(\tau))-K_1)\, d\tau.
\end{aligned}
\end{equation*}
Using the Gronwall inequality, we get
\[u_\lambda(\alpha(s))-K_1\leqslant \bigg(\max_{x\in M,|\dot x|\leqslant \textrm{diam}(M)}|L(x,\dot x)+c_0|+\|a\|_\infty K_1+A\bigg)e^{\|a\|_\infty},\quad \forall\lambda\in(0,1).\]
Taking $s=1$, we get a uniform upper bound of $\{u_\lambda\}_{\lambda\in (0,1)}$.
\end{proof}

\begin{lem}\label{s3}
The family $\{u_\lambda\}_{\lambda\in (0,1)}$ is uniformly bounded and equi-Lipschitz continuous.
\end{lem}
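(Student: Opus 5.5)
Uniform boundedness is almost immediate from what precedes. From below, we have $u_\lambda\geqslant v_0$ because $u_\lambda=\lim_{t\to+\infty}T^\lambda_t v_0$ is the maximal solution and $v_0$ is a (strict) subsolution of \eqref{E} by Lemma \ref{v0sub}. Alternatively, one can note that $u_\lambda>v^+_\lambda$ together with the monotonicity of the construction. Hence $u_\lambda\geqslant -\|v_0\|_\infty$ for all $\lambda\in(0,1)$. From above, Lemma \ref{bM} gives a uniform upper bound. So there is $K>0$ with $\|u_\lambda\|_\infty\leqslant K$ for all $\lambda\in(0,1)$.

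For equi-Lipschitz continuity, the plan is to use the characterization (iii) of Proposition \ref{soleq}. Since $u_\lambda$ is a viscosity solution, it is Lipschitz and satisfies
\[
H(x,Du_\lambda(x))\leqslant c_0+A\lambda-\lambda a(x)u_\lambda(x)\leqslant c_0+A+\|a\|_\infty K
\]
for almost every $x$, uniformly in $\lambda\in(0,1)$. By the superlinearity (H2), there is $R>0$, independent of $\lambda$, such that $H(x,p)>c_0+A+\|a\|_\infty K$ whenever $\|p\|_x>R$. Therefore $\|Du_\lambda(x)\|_x\leqslant R$ almost everywhere. On the connected closed manifold $M$ this gives the $R$-Lipschitz bound for every $u_\lambda$.

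As an alternative route for the Lipschitz bound, one could use $u_\lambda\prec\bar L$ along unit-time geodesics, which bounds $u_\lambda(x)-u_\lambda(y)$ by $C\,d(x,y)$ with a constant depending only on $K$, $\|a\|_\infty$ and $A$. The a.e. gradient bound is cleaner, though.

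There is no real obstacle here. The only point to check is that the lower bound $u_\lambda\geqslant v_0$ indeed holds. This follows from the maximality of $u_\lambda$: the viscosity solutions of \eqref{E} dominate strict subsolutions via the semigroup limit, since $T^\lambda_t v_0\geqslant v_0$ for subsolutions and is nondecreasing in $t$. The paper already uses this fact before Lemma \ref{umaxstab}.
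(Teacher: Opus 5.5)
Your argument is correct. The boundedness part coincides with the paper's: the lower bound $u_\lambda\geqslant v_0$ comes from the construction $u_\lambda=\lim_{t\to+\infty}T^\lambda_t v_0$, and the upper bound from Lemma \ref{bM}.

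For the Lipschitz part you take a different but equivalent route. You use characterization (iii) of Proposition \ref{soleq} to get $H(x,Du_\lambda)\leqslant c_0+A+\|a\|_\infty K$ a.e., and then coercivity of $H$ to bound $\|Du_\lambda\|_x$ a.e. The paper instead uses characterization (i), namely $u_\lambda\prec\bar L$, along a unit-speed geodesic. That gives in one line $u_\lambda(y)-u_\lambda(x)\leqslant\bigl(\max_{\|v\|_x\leqslant 1}|L(x,v)+c_0|+A+\|a\|_\infty K\bigr)d(x,y)$, which is exactly the ``alternative route'' you mention.

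The paper's version gives the global estimate in the Riemannian distance directly. It avoids the standard step of upgrading an almost-everywhere gradient bound of a Lipschitz function to a global Lipschitz bound on the connected manifold $M$. Yours reads the constant off the coercivity of $H$, which is the more familiar PDE formulation. In both cases the only real input is the uniform sup bound $K$.

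A small caution on your justification of $u_\lambda\geqslant v_0$. It is valid for the maximal solution built as the semigroup limit, since $T^\lambda_t v_0\geqslant v_0$ for the subsolution $v_0$ and $t\mapsto T^\lambda_t v_0$ is nondecreasing. But in this non-monotone setting there is no comparison principle, so you should not phrase it as ``solutions of \eqref{E} dominate strict subsolutions''. Likewise, the aside via $u_\lambda>v^+_\lambda$ gives no $\lambda$-uniform lower bound without further information on $v^+_\lambda$.
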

\begin{proof}
We have already shown that $\{u_\lambda\}_{\lambda\in (0,1)}$ is uniformly bounded. Now we are going to show that $\{u_\lambda\}_{\lambda\in (0,1)}$ is equi-Lipschitz continuous. Picking $x,y\in M$. We take a geodesic $\alpha:[0,d(x,y)]\to M$ with constant speed connecting $x$ and $y$. It follows that
\begin{equation*}
\begin{aligned}
  u_\lambda(y)-u_\lambda(x)&\leqslant \int_0^{d(x,y)}\bigg[L(\alpha(s),\dot{\alpha}(s))+c_0-\lambda a(\alpha(s)) u_\lambda(\alpha(s))+A\lambda\bigg]\, ds
  \\ &\leqslant \bigg(\max_{x\in M,|\dot x|\leqslant 1}|L(x,\dot x)+c_0|+A+\|a\|_\infty\|u_\lambda\|_\infty\bigg) d(x,y).
\end{aligned}
\end{equation*}
Exchanging $x$ and $y$, we can show that $\{u_\lambda\}_{\lambda\in (0,1)}$ is equi-Lipschitz continuous.
\end{proof}

According to the Arzel\'a-Ascoli theorem and Lemma \ref{s3}, any sequence $(u_{\lambda_n})_n$ with $\lambda_n\to 0^+$ admits a subsequence which uniformly converges to a continuous function $u^*$. According to Proposition \ref{stability}, $u^*$ is a solution of \eqref{E0}.
\begin{lem}\label{leqA}
Let $u_*$ be a limit point of $\{u_\lambda\}_{\lambda\in (0,1)}$. Then
\[\int_{TM}a(x)u_*(x)\, d\tilde\mu\leqslant A,\quad \forall \tilde\mu\in\widetilde{\mathfrak M}.\]
\end{lem}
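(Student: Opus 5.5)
The plan is to integrate the subsolution inequality for $u_\lambda$ against a Mather measure and then pass to the limit along the subsequence defining $u_*$. This is the standard argument from the monotone vanishing discount theory. Let $\lambda_n\to0^+$ with $u_{\lambda_n}\to u_*$ uniformly.

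\textbf{Step 1 (smoothing).} Each $u_{\lambda_n}$ is a subsolution of \eqref{E}, and by Lemma \ref{s3} it is Lipschitz with a uniform constant. By Proposition \ref{soleq} the inequality
\[\lambda_n a(x)u_{\lambda_n}(x)+H(x,Du_{\lambda_n}(x))\leqslant c_0+A\lambda_n\]
therefore holds almost everywhere. Proposition \ref{ue}, applied to $G(x,p,u)=\lambda_n(a(x)u-A)+H(x,p)$ with tolerance $\varepsilon=\lambda_n^2$, then gives $w_n\in C^\infty(M)$ such that $\|w_n-u_{\lambda_n}\|_\infty\leqslant\lambda_n^2$ and
\[\lambda_n a(x)w_n(x)+H(x,Dw_n(x))\leqslant c_0+A\lambda_n+\lambda_n^2\quad \text{for all } x\in M.\]

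\textbf{Step 2 (integration against a Mather measure).} Fix $\tilde\mu\in\widetilde{\mathfrak M}$. By Fenchel's inequality, $Dw_n(x)\cdot v\leqslant L(x,v)+H(x,Dw_n(x))$, so pointwise on $TM$
\[Dw_n(x)\cdot v\leqslant L(x,v)+c_0+A\lambda_n+\lambda_n^2-\lambda_n a(x)w_n(x).\]
We integrate this with respect to $\tilde\mu$. Since $\tilde\mu$ is closed and $w_n\in C^1$, the left-hand side integrates to $0$. Since $\tilde\mu$ is a Mather measure, Proposition \ref{Mather} gives $\int L\,d\tilde\mu=-c_0$. The integrability of $L$ against $\tilde\mu$ needed here is part of $\tilde\mu$ being a minimizer. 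We obtain
\[0\leqslant A\lambda_n+\lambda_n^2-\lambda_n\int_{TM}a(x)w_n(x)\,d\tilde\mu.\]

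\textbf{Step 3 (limit).} Dividing by $\lambda_n>0$ gives
\[\int_{TM} a(x)w_n(x)\,d\tilde\mu\leqslant A+\lambda_n.\]
Because $w_n\to u_*$ uniformly and $a$ is bounded, letting $n\to\infty$ yields $\int_{TM}a(x)u_*(x)\,d\tilde\mu\leqslant A$.

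The only delicate point is the smoothing error in Step 1: the error $\varepsilon$ must be $o(\lambda_n)$ so that it survives division by $\lambda_n$. Choosing $\varepsilon=\lambda_n^2$ handles this. Note also that no sign condition on $a$ is used anywhere, so the argument is insensitive to $a$ changing sign across static classes.
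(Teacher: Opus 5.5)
Your proof is correct and is essentially the paper's argument: both integrate the relation for $u_\lambda$ against a Mather measure and combine $\int_{TM} L\,d\tilde\mu=-c_0$ with the lower bound $\int_{TM}(L-\lambda a u_\lambda+A\lambda)\,d\tilde\nu\geqslant -c_0$ over closed measures. The paper cites this lower bound from the variational characterization of the critical value (Proposition \ref{Mather} applied to the frozen Hamiltonian $H+\lambda a u_\lambda-A\lambda$), whereas you derive it directly from Proposition \ref{ue} and Fenchel's inequality; your choice $\varepsilon=\lambda_n^2$ is harmless but unnecessary, since letting $\varepsilon\to0$ at fixed $\lambda$ already gives the exact inequality $\int_{TM} a u_\lambda\,d\tilde\mu\leqslant A$.
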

\begin{proof}
By Proposition \ref{Mather}, $\int_{TM}L(x,\dot x)\, d\tilde\mu=-c_0$ for all $\tilde\mu\in\widetilde{\mathfrak M}$. We then conclude that
\begin{align*}
\int_{TM}(L(x,\dot x)-\lambda a(x)u_\lambda(x)+A\lambda)\, d\tilde{\mu}&=-c_0+\int_{TM}(-\lambda a(x)u_\lambda(x)+A\lambda)\, d\tilde{\mu}
\\ &\geqslant \min_{\tilde\nu}\int_{TM}(L(x,\dot x)-\lambda a(x)u_\lambda(x)+A\lambda)\, d\tilde{\nu}=-c_0,
\end{align*}
where the minimum is taken among all closed probability measures on $TM$. The above inequality implies that
\[\int_{TM}a(x)u_\lambda(x)\, d\tilde{\mu}\leqslant A.\]
Letting $\lambda\to 0^+$, we get the conclusion.
\end{proof}

Fixing $z\in M$. Let $\gamma^z_\lambda:(-\infty,0]\to M$ be a $u_\lambda$-calibrated curve satisfying $\gamma^z_\lambda(0)=z$.
\begin{lem}\label{gameq}
The family $\{\gamma^z_\lambda\}_{z\in M,\ \lambda\in (0,1)}$ is equi-Lipschitz continuous.
\end{lem}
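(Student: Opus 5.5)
We argue the standard way for Tonelli Lagrangians: a uniform bound on the speed of calibrated curves follows from a uniform energy bound. The extra term in $\bar L$ only involves $\lambda a(x)u_\lambda(x)$, which is uniformly bounded by Lemma \ref{s3}.

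\emph{Step 1: the calibrated curves are $\bar L$-minimizers with uniformly bounded data.} Since $u_\lambda$ is a solution of \eqref{E}, Proposition \ref{soleq} shows it is a backward weak KAM solution, and $\gamma^z_\lambda$ realizes equality in $u_\lambda\prec \bar L$. Hence, for every $t'<t\leqslant 0$, the restriction $\gamma^z_\lambda|_{[t',t]}$ minimizes
\[
\int_{t'}^t\Big[L(\gamma,\dot\gamma)+c_0-\lambda a(\gamma)u_\lambda(\gamma)+A\lambda\Big]\,ds
\]
among absolutely continuous curves with the same endpoints. By Lemma \ref{s3}, there is a constant $K$, independent of $\lambda\in(0,1)$, such that $|\lambda a u_\lambda - A\lambda|\leqslant \|a\|_\infty\sup_\lambda\|u_\lambda\|_\infty + A=:K$. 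Moreover, $u_\lambda$ is $\kappa'$-Lipschitz with $\kappa'$ independent of $\lambda$.

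\emph{Step 2: a uniform $L^1$-type bound on each unit interval.} Fix $s<0$ and use the calibration on $[s-1,s]$:
\[
\int_{s-1}^{s}L(\gamma^z_\lambda,\dot\gamma^z_\lambda)\,d\tau \leqslant u_\lambda(\gamma^z_\lambda(s))-u_\lambda(\gamma^z_\lambda(s-1)) + |c_0| + K \leqslant \kappa'\,\mathrm{diam}(M)+|c_0|+K.
\]
By the superlinearity (L2), $L(x,v)\geqslant \|v\|_x - C_1$. This gives $\int_{s-1}^s\|\dot\gamma^z_\lambda\|\,d\tau\leqslant C_2$ uniformly in $s$, $z$ and $\lambda$. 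Consequently, for a.e.\ $\tau$ in each unit interval there are points where $\|\dot\gamma\|\leqslant C_2$.

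\emph{Step 3: upgrade to a pointwise speed bound.} This is the main obstacle. The integrand depends on $x$ only through $\lambda a(x)u_\lambda(x)$, which is merely Lipschitz (since $a$ is only continuous, merely continuous in $x$), so we cannot use the Euler--Lagrange flow and energy conservation directly. I would instead use the pointwise argument at differentiability points of $u_\lambda$. The function $u_\lambda$ is differentiable at $\gamma^z_\lambda(\tau)$ for $\tau<0$ in the interior of a calibrated curve. The standard weak KAM argument (compare with curves through nearby points, as in \cite{Fathi08}) gives
\[
\frac{\partial L}{\partial v}(\gamma(\tau),\dot\gamma(\tau))=Du_\lambda(\gamma(\tau))
\]
for a.e.\ $\tau$. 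Since $\|Du_\lambda\|\leqslant\kappa'$ and $H$ is Tonelli, we get $\dot\gamma(\tau)=\partial_pH(\gamma(\tau),Du_\lambda(\gamma(\tau)))$, which is bounded by $\max\{\|\partial_pH(x,p)\|:\|p\|_x\leqslant\kappa'\}$. The differentiability at interior points follows as usual from the inequalities $u_\lambda\prec\bar L$ and calibration, which provide $C^1$ test functions touching $u_\lambda$ from above and below along the curve.

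Should that route be delicate, the fallback is a direct Lipschitz bound. For $\tau_1<\tau_2\leqslant0$ with $\tau_2-\tau_1=\delta$, compare $\gamma$ on $[\tau_1,\tau_2]$ with a constant-speed geodesic. Minimality together with $L(x,v)\geqslant R\|v\|-C(R)$ bounds the length by $C(\delta+d)$. Taking $\delta$ small then yields $d(\gamma(\tau_1),\gamma(\tau_2))\leqslant C'|\tau_2-\tau_1|$ uniformly in $z$ and $\lambda$, which is the claimed equi-Lipschitz continuity.
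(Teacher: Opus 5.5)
\textbf{There is a genuine gap in Step 3.} Neither of your two routes actually yields the uniform speed bound.

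\emph{Main route.} It hinges on $u_\lambda$ being differentiable at $\gamma^z_\lambda(\tau)$ for interior times.
\begin{itemize}
\item A.e.\ differentiability of the Lipschitz function $u_\lambda$ says nothing on a curve, which is a null set.
\item The usual construction of $C^1$ functions touching $u_\lambda$ from above and below at $\gamma(\tau)$ needs the Lagrangian to be at least $C^1$ in $x$. This holds whether you deform the calibrated curve toward nearby endpoints and differentiate the action, or use semiconcavity.
\item The frozen Lagrangian $L(x,v)+c_0-\lambda a(x)u_\lambda(x)+A\lambda$ is only continuous in $x$, because $a$ is merely continuous and $u_\lambda$ merely Lipschitz.
\end{itemize}
So these barriers need not be differentiable, and the identity $\partial_vL(\gamma,\dot\gamma)=Du_\lambda(\gamma)$ is unproven.

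\emph{Fallback.} This argument is circular. Take $[\tau_1,\tau_2]$ with $\tau_2-\tau_1=\delta$ and $d=d(\gamma(\tau_1),\gamma(\tau_2))$. The constant-speed geodesic competitor has action of order $\delta\max_{\|v\|_x\leqslant d/\delta}L(x,v)$, which is superlinear in $d/\delta$. Minimality then only gives
\[R\ell-C(R)\delta\leqslant\delta\max_{\|v\|_x\leqslant d/\delta}L(x,v)+C\delta,\]
and this inequality holds automatically when $d/\delta$ is large. Your claimed bound $\ell\leqslant C(\delta+d)$ already presupposes that $d/\delta$ is bounded, which is what you are trying to prove.

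\textbf{The missing idea is already in your Step 1.} In Step 2 you bounded $u_\lambda(\gamma(s))-u_\lambda(\gamma(s-1))$ by $\kappa'\,\textrm{diam}(M)$ on unit intervals. Instead, bound $u_\lambda(\gamma(t))-u_\lambda(\gamma(s))$ by $\kappa' d(\gamma(t),\gamma(s))$ on an arbitrary interval $s<t\leqslant 0$. Then use superlinearity with slope strictly larger than $\kappa'$, say $L(x,v)+c_0\geqslant(\kappa'+1)\|v\|_x-C_1$. Calibration gives
\[\kappa' d(\gamma(t),\gamma(s))\geqslant\int_s^t\bar L(\gamma,\dot\gamma,u_\lambda(\gamma))\,d\tau\geqslant(\kappa'+1)\int_s^t\|\dot\gamma\|\,d\tau-(C_1+K)(t-s)\geqslant(\kappa'+1)\,d(\gamma(t),\gamma(s))-(C_1+K)(t-s).\]
Hence $d(\gamma(t),\gamma(s))\leqslant(C_1+K)(t-s)$, uniformly in $z$ and $\lambda$.

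This is exactly the paper's proof. It uses no regularity of $a$, no Euler--Lagrange equation and no differentiability of $u_\lambda$. Its pointwise form is
\[(\kappa'+1)\|\dot\gamma\|-C_1-K\leqslant\tfrac{d}{d\tau}u_\lambda(\gamma(\tau))\leqslant\kappa'\|\dot\gamma\|\quad\text{a.e.},\]
which gives $\|\dot\gamma\|\leqslant C_1+K$ directly.
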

\begin{proof}
By Lemma \ref{s3}, there are two constants $K,\bar \kappa>0$ independent of $\lambda$ such that $u_\lambda$ is bounded by $K$ and $u_\lambda$ is $\bar \kappa$-Lipschitz continuous. By the superlinearity of $L$, for each $T>0$, there is $C_T\in\mathbb R$ such that
\[L(x,v)\geqslant T\|v\|_x+C_T.\]
Thus, for $0\geqslant t>s$, we have
\begin{align*}
\bar \kappa d(\gamma^z_\lambda(t),\gamma^z_\lambda(s))&\geqslant u_\lambda(\gamma^z_\lambda(t))-u_\lambda(\gamma^z_\lambda(s))
\\ &=\int_s^t \bigg[L(\gamma^z_\lambda(\tau),\dot{\gamma}^z_\lambda(\tau))+c_0-\lambda a(\gamma^z_\lambda(\tau))u_\lambda(\gamma^z_\lambda(\tau))+A\lambda\bigg]\, d\tau
\\ &\geqslant \int_s^t \bigg[(\bar \kappa+1)\|\dot{\gamma}^z_\lambda(\tau)\|_{\gamma^z_\lambda(\tau)}+C_{\bar \kappa+1}\bigg]\, d\tau-\|a\|_\infty K(t-s)
\\ &\geqslant (\bar \kappa+1)d(\gamma^z_\lambda(t),\gamma^z_\lambda(s))+(C_{\bar \kappa+1}-\|a\|_\infty K)(t-s),
\end{align*}
which implies that
\[d(\gamma^z_\lambda(t),\gamma^z_\lambda(s))\leqslant (\|a\|_\infty K-C_{\bar \kappa+1})(t-s).\]
The proof is now complete.
\end{proof}

Define
\begin{equation}\label{muz}
  \int_{TM} f(x,\dot x)\, d\tilde{\mu}^{z,\lambda}_{t}:=\frac{1}{t}\int_{-t}^0f(\gamma^z_\lambda(s),\dot{\gamma}^z_\lambda(s))\, ds,\quad \forall f\in C_c(TM).
\end{equation}
According to Lemma \ref{gameq}, the family $\{\tilde{\mu}^{z,\lambda}_{t}\}_{\lambda\in(0,1),\, t>0}$ is tight. Hence, for each fixed $\lambda\in(0,1)$, there exists a sequence $t_n\to +\infty$ such that $(\tilde{\mu}^{z,\lambda}_{t_n})_n$ weakly$^*$-converges to a probability measure $\tilde{\mu}_{z,\lambda}$. Moreover, there exists a sequence $\lambda_n\to 0^+$ such that $(\tilde{\mu}_{z,\lambda_n})_n$ weakly$^*$-converges to a probability measure $\tilde{\mu}_z$.

\begin{lem}\label{mu*M}
Let $\tilde\mu_z$ be a weak$^*$-limit defined above. For each $z\in M$, the limit $\tilde{\mu}_z\in\widetilde{\mathfrak M}$.
\end{lem}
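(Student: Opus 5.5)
We must show that $\tilde\mu_z$ is closed and that $\int_{TM}L\,d\tilde\mu_z=-c_0$. Proposition \ref{Mather} gives $\int L\,d\tilde\mu_z\geqslant -c_0$ for every closed probability measure, so only the reverse inequality needs proof. We proceed in three steps.

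\emph{Step 1: $\tilde\mu_{z,\lambda}$ and $\tilde\mu_z$ are closed.} By Lemma \ref{gameq}, the measures $\tilde\mu^{z,\lambda}_t$ are supported in a fixed compact set $\{\|v\|_x\leqslant R\}\subset TM$, with $R$ independent of $z,\lambda,t$. Hence their weak$^*$ limits have the same compact support, and condition (1) of closedness holds. For $f\in C^1(M)$,
\[
\int Df(v)\,d\tilde\mu^{z,\lambda}_t=\frac{f(z)-f(\gamma^z_\lambda(-t))}{t}\to 0 .
\]
Since $Df(v)$ is continuous and bounded on the common compact support, this passes to the limit along $t_n\to+\infty$ and then along $\lambda_n\to0^+$.

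\emph{Step 2: action bound for $\tilde\mu_{z,\lambda}$.} Calibration of $\gamma^z_\lambda$ gives
\[
\frac1t\int_{-t}^0\Big[L(\gamma^z_\lambda,\dot\gamma^z_\lambda)+c_0-\lambda a(\gamma^z_\lambda)u_\lambda(\gamma^z_\lambda)+A\lambda\Big]ds=\frac{u_\lambda(z)-u_\lambda(\gamma^z_\lambda(-t))}{t}.
\]
The right-hand side is bounded by $2\|u_\lambda\|_\infty/t$, which is uniformly small by Lemma \ref{s3}. The integrand is continuous on the compact support, so letting $t=t_n\to+\infty$ yields
\[
\int\Big(L+c_0-\lambda a u_\lambda+A\lambda\Big)\,d\tilde\mu_{z,\lambda}=0 .
\]

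\emph{Step 3: pass $\lambda\to0^+$.} By Lemma \ref{s3}, $|\lambda a u_\lambda-A\lambda|\leqslant \lambda(\|a\|_\infty K+A)\to0$ uniformly. All measures live on one compact set on which $L$ is continuous, so weak$^*$ convergence gives $\int L\,d\tilde\mu_z=-c_0$. With Step 1, $\tilde\mu_z$ is a closed measure attaining the minimum, so $\tilde\mu_z\in\widetilde{\mathfrak M}$.

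The main point requiring care is the compactness of supports, which lets us test against the unbounded functions $L$ and $Df(v)$ even though weak$^*$ convergence is stated only against $C_c(TM)$. This is supplied by the equi-Lipschitz bound of Lemma \ref{gameq}. Without it, one would instead need lower semicontinuity of $\int L$, which still suffices here because only the inequality $\int L\,d\tilde\mu_z\leqslant -c_0$ is needed. The rest is routine.
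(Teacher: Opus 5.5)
Your proof is correct and follows the paper's argument. Closedness comes from the telescoping identity $\int Df(v)\,d\tilde\mu^{z,\lambda}_t=(f(z)-f(\gamma^z_\lambda(-t)))/t$, and minimality from the calibration identity for $u_\lambda$ followed by $\lambda\to0^+$ using the uniform bound on $u_\lambda$. Your explicit use of Lemma \ref{gameq} to confine all measures to a fixed compact set $\{\|v\|_x\leqslant R\}$ justifies testing against the unbounded functions $L$ and $Df(v)$, and also gives condition (1) of closedness; the paper leaves both points implicit.
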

\begin{proof}
{\bf The measure $\tilde{\mu}_z$ is closed.} Let $f\in C^1(M)$, we have
\begin{align*}
\int_{TM} df(\dot x)\, d\tilde{\mu}_{z,\lambda}&=\lim_{n\to +\infty}\frac{1}{t_n}\int_{-t_n}^0\frac{df}{ds}(\gamma^z_{\lambda}(s))\, ds
\\ &=\lim_{n\to +\infty}\frac{f(\gamma^z_{\lambda}(0))-f(\gamma^z_{\lambda}(-t_n))}{t_n}
\leqslant \lim_{n\to +\infty}\frac{2\|f\|_\infty}{t_n}=0.
\end{align*}
Therefore, $\tilde{\mu}_{z,\lambda}$ is closed. Then $\tilde{\mu}_z$ is also closed, since it is a limit of closed measures.

{\bf The measure $\tilde{\mu}_z$ is minimizing.} Since $u_\lambda$ is uniformly bounded,
\begin{align*}
&\int_{TM} [L(x,\dot x)+c_0-\lambda a(x)u_{\lambda}(x)+A\lambda]\, d\tilde{\mu}_{z,\lambda}
\\ &=\lim_{n\to +\infty}\frac{1}{t_n}\int_{-t_n}^0\bigg[L(\gamma^z_{\lambda}(s),\dot{\gamma}^z_{\lambda}(s))+c_0-\lambda a(\gamma^z_{\lambda}(s))u_{\lambda}(\gamma^z_{\lambda}(s))+A\lambda\bigg]\, ds
\\ &=\lim_{n\to +\infty}\frac{u_{\lambda}(\gamma^z_{\lambda}(0))-u_{\lambda}(\gamma^z_{\lambda}(-t_n))}{t_n}=0.
\end{align*}
Let $(\tilde{\mu}_{z,\lambda_n})_n$ weakly$^*$-converges to $\tilde{\mu}_z$, we have
\[\int_{TM} L(x,\dot x)\, d\tilde\mu_z=\lim_{n\to+\infty}\int_{TM} [L(x,\dot x)-\lambda_n a(x)u_{\lambda_n}(x)+A\lambda_n]\, d\tilde{\mu}_{z,\lambda_n}=-c_0,\]
which implies that $\tilde{\mu}_z$ is minimizing.
\end{proof}

\begin{lem}\label{mu*>0}
For each $z\in M$ and $\lambda>0$, we have $\int_{TM} a(x)\, d\tilde{\mu}_{z,\lambda}\geqslant 0$.
\end{lem}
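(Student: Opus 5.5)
The plan is to argue by contradiction and use the large-time behavior of Lemma \ref{stab}, applied to an initial datum slightly below $u_\lambda$. Fix $z\in M$ and $\lambda\in(0,1)$, and let $\gamma=\gamma^z_\lambda$ be the calibrated curve and $t_n\to+\infty$ the sequence defining $\tilde\mu_{z,\lambda}$. Suppose $\int_{TM}a\,d\tilde\mu_{z,\lambda}=-2\delta<0$. Then
\[
\frac1{t_n}\int_{-t_n}^0 a(\gamma(\sigma))\,d\sigma\le-\delta
\]
for $n$ large.

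\textbf{Step 1: choice of initial datum.} Since $v^+_\lambda<u_\lambda$ and both are continuous on the compact manifold $M$, there is $\varepsilon>0$ with $\varphi:=u_\lambda-\varepsilon>v^+_\lambda$. By Lemma \ref{stab}, $m(t,x):=u_\lambda(x)-T^\lambda_t\varphi(x)$ tends to $0$ uniformly as $t\to+\infty$. We have $m(0,\cdot)\equiv\varepsilon$.

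\textbf{Step 2: a differential inequality along the calibrated curve.} Fix $t>0$ and set $f(s):=m(s,\gamma(s-t))$ for $s\in[0,t]$.
\begin{itemize}
\item Take $0\le s_1<s_2\le t$. Use the semigroup property $T^\lambda_{s_2}\varphi=T^\lambda_{s_2-s_1}(T^\lambda_{s_1}\varphi)$, which is built into the implicit definition \eqref{T-}.
\item In the infimum, test with the segment $\gamma|_{[s_1-t,\,s_2-t]}$. This gives an upper bound for $T^\lambda_{s_2}\varphi(\gamma(s_2-t))$ in terms of $T^\lambda_{s_1}\varphi(\gamma(s_1-t))$ plus the action of $L+c_0-\lambda a\,T^\lambda_\tau\varphi+A\lambda$.
\item Since this segment is $u_\lambda$-calibrated, $u_\lambda(\gamma(s_2-t))-u_\lambda(\gamma(s_1-t))$ equals the same action with $u_\lambda$ in place of $T^\lambda_\tau\varphi$.
\end{itemize}
Subtracting yields
\[
f(s_2)\ \ge\ f(s_1)-\lambda\int_{s_1}^{s_2}a(\gamma(r-t))\,f(r)\,dr .
\]
Since $f$ is continuous, this says $f'\ge-\lambda a(\gamma(\cdot-t))f$ in the sense of Dini derivatives. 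Multiplying by the integrating factor $\exp\big(\lambda\int_0^s a(\gamma(r-t))\,dr\big)$ shows that $s\mapsto f(s)\exp\big(\lambda\int_0^s a(\gamma(r-t))dr\big)$ is nondecreasing. No sign information on $f$ or monotonicity of the semigroup is needed for this. Hence
\[
m(t,z)=f(t)\ \ge\ \varepsilon\exp\Big(-\lambda\int_{-t}^0 a(\gamma(\sigma))\,d\sigma\Big).
\]

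\textbf{Step 3: contradiction.} Along $t=t_n$ the exponent is at least $\lambda\delta t_n\to+\infty$, so $m(t_n,z)\to+\infty$. This contradicts the uniform convergence $m(t,\cdot)\to0$ from Step 1, so $\int_{TM}a\,d\tilde\mu_{z,\lambda}\ge0$.

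The main obstacle is Step 2. One must check carefully that the implicit semigroup \eqref{T-} satisfies the semigroup property and that restricting its defining infimum to the calibrated segment is legitimate. The term $-\lambda a\,T^\lambda_\tau\varphi$ must then pair exactly with $-\lambda a\,u_\lambda$, so that only the linear term in $m$ survives. One must also pass rigorously from the integral inequality to the exponential lower bound. I would handle the last point by a Gronwall argument on small subintervals, using continuity of $f$ and boundedness of $a$.
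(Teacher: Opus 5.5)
Your proposal is correct and follows essentially the same route as the paper. Both argue by contradiction via Lemma~\ref{stab} applied to $u_\lambda-\varepsilon$, derive the linear inequality $\dot w\geqslant -\lambda a w$ for $u_\lambda-T^\lambda_{\cdot}\varphi$ along the calibrated curve, and obtain exponential growth that contradicts the convergence. The differences are cosmetic: you evaluate directly at $t=t_n$ and contradict $m(t_n,z)\to 0$, whereas the paper shifts to $s_n=t_n+t_\lambda$ and contradicts the bound $<\varepsilon$; and you derive the inequality in integral form from the semigroup property, which is a more careful version of the paper's a.e.\ derivative claim.
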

\begin{proof}
We argue by contradiction. Assume there is $z\in M$, $\lambda>0$ and $B_{z,\lambda}>0$ such that
\[\int_{TM} a(x)\, d\tilde{\mu}_{z,\lambda}<-B_{z,\lambda}.\]
Then there is a sequence $t_n\to+\infty$, and $N>0$ such that if $n\geqslant N$, we have
\[\frac{1}{t_n}\int_{-t_n}^0a(\gamma^z_{\lambda}(s))\, ds<-B_{z,\lambda}.\]
We take a small constant $\varepsilon>0$ and define $u_\varepsilon:=u_{\lambda}-\varepsilon$. By Lemma \ref{stab}, there is $t_{\lambda}>0$ depending on $\lambda$ such that
\begin{equation}\label{T>del}
  u_{\lambda}(z)-T^{\lambda}_tu_\varepsilon(z)<\varepsilon,\quad \forall t>t_{\lambda}.
\end{equation}
Note that, since the convergence in Lemma \ref{stab} is uniform, $t_\lambda$ is independent of $z$. Define $s_n:=t_n+t_{\lambda}$, and
\[w_\lambda(s):=u_{\lambda}(\gamma^z_\lambda(s))-T^{\lambda}_{s+s_n}u_\varepsilon(\gamma^z_\lambda(s)).\]
Since $\gamma^z_\lambda$ is a $u_\lambda$-calibrated curve, we have
\[\frac{du_{\lambda}}{ds}(\gamma^z_\lambda(s))=L(\gamma^z_\lambda(s),\dot{\gamma}^z_\lambda(s))+c_0-\lambda a(\gamma^z_\lambda(s))u_{\lambda}(\gamma^z_\lambda(s))+A\lambda,\quad a.e.\ s<0.\]
Moreover, by the definition of the solution semigroup, it follows that
\begin{align*}
\frac{dT^{\lambda}_{s+s_n}u_\varepsilon}{ds}(\gamma^z_\lambda(s))
\leqslant &L(\gamma^z_\lambda(s),\dot{\gamma}^z_\lambda(s))+c_0
\\ &-\lambda a(\gamma^z_\lambda(s))T^{\lambda}_{s+s_n}u_\varepsilon(\gamma^z_\lambda(s))+A\lambda,\quad a.e.\ s\in(-s_n,0).
\end{align*}
We conclude that
\[\dot w_\lambda(s)\geqslant -\lambda a(\gamma^z_\lambda(s))w_\lambda(s),\quad a.e.\ s\in(-s_n,0).\]
Notice that
\[w_\lambda(-s_n)=u_{\lambda}(\gamma^z_\lambda(-s_n))-u_\varepsilon(\gamma^z_\lambda(-s_n))=\varepsilon.\]
When $t_n$ is large enough, we have
\begin{align*}
w_{\lambda}(0)&=u_{\lambda}(z)-T^{\lambda}_{s_n}u_\varepsilon(z)
\\ &\geqslant \varepsilon e^{-\int_{-t_n-t_{\lambda}}^{-t_n}\lambda a(\gamma_\lambda(s))\, ds}e^{-\int_{-t_n}^{0}\lambda a(\gamma_\lambda(s))\, ds}>\varepsilon e^{-\lambda \|a\|_\infty t_{\lambda}}e^{\lambda B_{z,\lambda} t_n}>\varepsilon,
\end{align*}
which contradicts \eqref{T>del}.
\end{proof}

\begin{lem}\label{h=w}
Let $w$ be a subsolution of \eqref{E0}. Let $x_1,x_2\in M$. If $d_H(x_1,x_2)=0$, we have $h^\infty(x_1,x_2)=w(x_2)-w(x_1)$.
\end{lem}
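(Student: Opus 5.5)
The argument is a two-sided squeeze that uses only Proposition \ref{h>w}\,(ii) and the assumption $d_H(x_1,x_2)=0$.

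\textbf{Lower bound.} Since $w$ is a subsolution of \eqref{E0}, Proposition \ref{h>w}\,(ii) applied to the pairs $(x_1,x_2)$ and $(x_2,x_1)$ gives
\[
h^\infty(x_1,x_2)\geqslant w(x_2)-w(x_1),\qquad h^\infty(x_2,x_1)\geqslant w(x_1)-w(x_2).
\]

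\textbf{Upper bound.} The hypothesis $d_H(x_1,x_2)=0$ means, by definition of $d_H$,
\[
h^\infty(x_1,x_2)=-h^\infty(x_2,x_1).
\]
Combining this with the second lower bound yields
\[
h^\infty(x_1,x_2)=-h^\infty(x_2,x_1)\leqslant -(w(x_1)-w(x_2))=w(x_2)-w(x_1).
\]
Together with the first lower bound, this forces $h^\infty(x_1,x_2)=w(x_2)-w(x_1)$.

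There is no real obstacle. The only point to check is that Proposition \ref{h>w}\,(ii) holds for arbitrary points $x,y\in M$, not only for points of $\mathcal A$. This is indeed how it is stated, and it follows from $w\prec L+c_0$ applied to near-minimizing curves of $h_t$ followed by taking the $\liminf$ in $t$. We also do not need $x_1,x_2\in\mathcal A$: the identity follows from the hypothesis $d_H(x_1,x_2)=0$ alone, although in practice this hypothesis places both points in the same static class.
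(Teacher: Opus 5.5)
Your proof is correct and is essentially the paper's argument: both apply Proposition~\ref{h>w}(ii) to the pairs $(x_1,x_2)$ and $(x_2,x_1)$ and combine this with $d_H(x_1,x_2)=0$. The paper phrases it by contradiction (assume the strict inequality, then add $h^\infty(x_2,x_1)$ to both sides), whereas you write it as a direct two-sided squeeze.
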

\begin{proof}
By Proposition \ref{h>w} we know that for a subsolution $w$ of \eqref{E0},
\[h^\infty(x_1,x_2)\geqslant w(x_2)-w(x_1).\]
Assume that
\[h^\infty(x_1,x_2)>w(x_2)-w(x_1).\]
Adding $h^\infty(x_2,x_1)$ to both sides of the above inequality, we obtain
\[0=h^\infty(x_2,x_1)+h^\infty(x_1,x_2)>h^\infty(x_2,x_1)+w(x_2)-w(x_1),\]
which implies
\[w(x_1)-w(x_2)>h^\infty(x_2,x_1).\]
This leads to a contradiction.
\end{proof}

\begin{cor}\label{h-h=c}
By Proposition \ref{h>w}, $-h^\infty(\cdot,x)$ is a subsolution of \eqref{E0}. If $d_H(x_1,x_2)=0$, then by Lemma \ref{h=w},
\[h^\infty(x_1,x_2)=-h^\infty(x_2,x)-(-h^\infty(x_1,x)),\]
that is, $h^\infty(x_2,x)$ and $h^\infty(x_1,x)$ differ by a constant $h^\infty(x_1,x_2)$.
\end{cor}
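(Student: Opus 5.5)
The plan is to apply Lemma \ref{h=w} to a suitable subsolution built from the Peierls barrier, one for each fixed target point $x$. Fix $x\in M$ and set $w(y):=-h^\infty(y,x)$.

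First, by Proposition \ref{h>w} (iv), $-h^\infty(\cdot,x)$ is a forward weak KAM solution of \eqref{E0} and in particular a viscosity subsolution. So $w$ is an admissible subsolution in Lemma \ref{h=w}.

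Next, since $d_H(x_1,x_2)=0$, Lemma \ref{h=w} gives
\[
h^\infty(x_1,x_2)=w(x_2)-w(x_1)=-h^\infty(x_2,x)+h^\infty(x_1,x).
\]
This is exactly the displayed identity, and rearranging it gives
\[
h^\infty(x_1,x)=h^\infty(x_2,x)+h^\infty(x_1,x_2).
\]

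Finally, the right-hand constant $h^\infty(x_1,x_2)$ does not depend on $x$, because $x$ entered only through the choice of $w$ and the lemma's conclusion involves $w$ only at $x_1$ and $x_2$. Letting $x$ range over $M$ therefore shows that $h^\infty(x_1,\cdot)-h^\infty(x_2,\cdot)\equiv h^\infty(x_1,x_2)$ on $M$. In particular, for $x_1,x_2$ in the same static class, the two elementary solutions differ by a constant.

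There is essentially no obstacle. The only point to check is that Proposition \ref{h>w} (iv) indeed provides the subsolution property of $y\mapsto -h^\infty(y,x)$ as a function of its first argument. As a sanity check, the identity is the equality case of the triangle inequality in Proposition \ref{h>w} (iii), which is consistent.
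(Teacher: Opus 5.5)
Your proof is correct and is essentially the paper's own argument: you take $w=-h^\infty(\cdot,x)$, which is a subsolution by Proposition \ref{h>w}~(iv), and apply Lemma \ref{h=w} to get $h^\infty(x_1,x_2)=h^\infty(x_1,x)-h^\infty(x_2,x)$ for every $x\in M$. Your remark that this is the equality case of the triangle inequality in Proposition \ref{h>w}~(iii) is a valid consistency check, though the proof does not need it.
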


To apply the method introduced in \cite{DNYZ}, it is crucial to establish Lemma \ref{e1e2} below.
By Lemmas \ref{mu*M} and \ref{mu*>0}, together with assumption (a), we know that the limiting measure $\mu_z$ satisfies $\mu_z(M_1)>0$. Consequently, when $\lambda$ is small,
any $u_\lambda$-calibrated curve $\gamma^z_\lambda$ must spend an arbitrarily long time in a
small neighborhood of $M_1$.

However, this observation alone is not sufficient to prove Lemma \ref{e1e2}.
Indeed, we also need to show that, for $j\in\{2,\dots,k\}$, the curve $\gamma^z_\lambda$
can spend only a finite amount of time in a small neighborhood of $M_j$ when $\lambda$ is
small, and moreover that this time is bounded by a constant independent of $\lambda$.
To establish this property, we shall use the definition of static classes and adopt a
dynamical viewpoint.

In Lemma \ref{alp} below, we show that Case A in Figure \ref{fig1} cannot occur; namely,
$\gamma^z_\lambda$ cannot remain for an infinite time in a neighborhood of $M_j$
($j\in\{2,\dots,k\}$) for a fixed small $\lambda$.
Nevertheless, another scenario may happen: the curve $\gamma^z_\lambda$ may first enter
a neighborhood of some $M_j$ and then move into a neighborhood of $M_1$, while the time
$t_\lambda$ spent near $M_j$ tends to $+\infty$ as $\lambda\to0^+$.
This situation is illustrated by Case B in Figure \ref{fig1}.

To avoid dealing with this difficulty directly, we first fix $z\in M_1$ and prove the
convergence of $u_\lambda$ on $M_1$. We then extend the convergence to the whole space $M$
by using Proposition \ref{uniqueM} together with Lemma \ref{eM1}.
The advantage of this strategy is that, when $z\in M_1$, Lemma \ref{alp} implies that
$\gamma^z_\lambda$ cannot enter a neighborhood of $M_j$ for any $j\in\{2,\dots,k\}$.
As a result, Lemma \ref{e1e2} follows easily.

\begin{figure}[htbp]
\small \centering
\includegraphics[width=15cm]{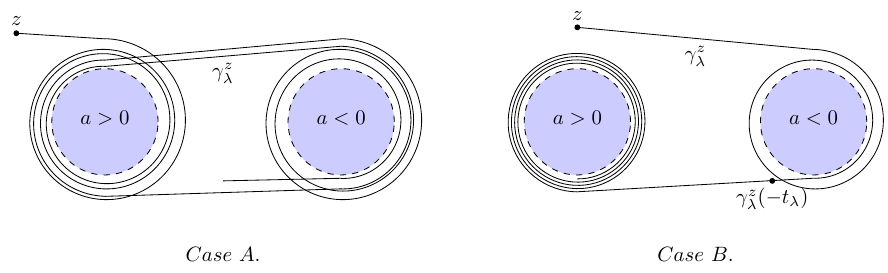}
\caption{Two cases causing difficulties in the proof of Lemma \ref{e1e2}.}\label{fig1}
\end{figure}

\begin{defn}
For $\delta>0$, we denote by $U^\delta$ the open $\delta$-neighborhood of a
subset $U\subset M$, that is,
\[
U^\delta := \{x\in M : \dist(x,U)< \delta\},
\]
where $\dist(x,U):=\inf_{y\in U} d(x,y)$.

Let $U_1$ and $U_2$ be two disjoint open subsets of $M$.
We say that a curve $\gamma:(-\infty,0]\to M$ leaves $U_1$ at time $-t_1$
and enters $U_2$ at time $-t_2$ if
\[
\gamma(-t_1)\in \partial U_1,\qquad \gamma(-t_2)\in \partial U_2,
\]
and
\[
\gamma(t)\notin \overline{U}_1\cup \overline{U}_2
\quad \text{for all}\quad t\in(-t_2,-t_1).
\]

We say that a curve $\gamma$ visits
$M_{i_1}^\delta,M_{i_2}^\delta,\dots,M_{i_p}^\delta$ in order if
$\gamma$ leaves $M_{i_1}^\delta$ at time $-s_1$ and enters
$M_{i_2}^\delta$ at time $-t_2$, then leaves $M_{i_2}^\delta$ at time
$-s_2$ and enters $M_{i_3}^\delta$ at time $-t_3$, and so on.
Finally, $\gamma$ enters $M_{i_p}^\delta$ at time $-t_p$.
Moreover, for each $i\in\{1,\dots,p-1\}$, the curve $\gamma$ does not
intersect the closed $\delta$-neighborhood $\overline{\mathcal M}^\delta$
of $\mathcal M$ on the interval $(-t_{i+1},-s_i)$.
\end{defn}

\begin{lem}
Given $\delta>0$, suppose that there exist $z\in M$ and a sequence
$\lambda_n\to 0^+$ such that the calibrated curve
$\gamma_n:=\gamma^z_{\lambda_n}$ leaves $M_i^\delta$ at time $-t_n^1$
and enters $M_j^\delta$ at time $-t_n^2$, where
$i,j\in\{1,\dots,k\}$ with $i\neq j$.
Assume in addition that $\gamma_n$ does not intersect
$\overline{\mathcal M}^\delta$ on $(-t_n^2,-t_n^1)$.
Then there exists a constant $T_\delta>0$, depending only on $\delta$,
such that
\[t_n^2 - t_n^1 \leqslant T_\delta \qquad \text{for all}\quad n.\]
\end{lem}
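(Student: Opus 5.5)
Let me argue by contradiction: suppose $t_n^2-t_n^1\to+\infty$ along a subsequence. I will consider the time-shifted curves $\eta_n(s):=\gamma_n(s-t_n^1)$ on $[-(t_n^2-t_n^1),0]$. By Lemma \ref{gameq} they are equi-Lipschitz, so by Arzel\`a--Ascoli and a diagonal argument a subsequence converges locally uniformly on $(-\infty,0]$ to a Lipschitz curve $\eta:(-\infty,0]\to M$. This limit satisfies $\eta(0)\in\partial M_i^\delta$, and $\eta(s)\notin \mathcal M^\delta$ for all $s<0$.

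The next step is to show that $\eta$ is a calibrated curve for some critical solution. Choose a further subsequence so that $u_{\lambda_n}\to u_*$ uniformly, where $u_*$ solves \eqref{E0}. On every compact interval $[-T,0]$, each $\eta_n$ is $(u_{\lambda_n},\bar L,0)$-calibrated, so
\[
u_{\lambda_n}(\eta_n(0))-u_{\lambda_n}(\eta_n(-T))=\int_{-T}^0\big[L(\eta_n,\dot\eta_n)+c_0-\lambda_n a(\eta_n)u_{\lambda_n}(\eta_n)+A\lambda_n\big]\,ds .
\]
The $\lambda_n$-terms vanish uniformly, since $u_\lambda$ is bounded by Lemma \ref{s3}. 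Lemma \ref{TM} then gives
\[
u_*(\eta(0))-u_*(\eta(-T))\geqslant\int_{-T}^0 [L(\eta,\dot\eta)+c_0]\,ds .
\]
Combined with $u_*\prec L+c_0$ (Proposition \ref{soleq}), this is an equality. Hence $\eta$ is a $(u_*,L,c_0)$-calibrated curve on $(-\infty,0]$.

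Now I use the standard weak KAM fact that the $\alpha$-limit set of a backward calibrated curve of a critical solution lies in the projected Aubry set $\mathcal A$. The cleanest route here is via measures. The averages $\frac1T\int_{-T}^0\delta_{(\eta,\dot\eta)}\,ds$ are tight because $\dot\eta$ is bounded. Any weak$^*$ limit is closed, by the same argument as in Lemma \ref{mu*M}. It is also minimizing, since its action equals $\lim \frac{u_*(\eta(0))-u_*(\eta(-T))}{T}=0$ after adding $c_0$. So the limit is a Mather measure, and its support projects into $\mathcal M$. On the other hand, $\eta$ stays outside the open set $\mathcal M^\delta$, so every such limit measure is supported in $(M\setminus\mathcal M^\delta)\times$(bounded set), which is a closed set. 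Since $\mathcal M\cap(M\setminus \mathcal M^\delta)=\emptyset$, this is a contradiction, and it proves the bound.

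The main obstacle is turning this into a constant $T_\delta$ that depends only on $\delta$, not on $z$, $i$, $j$ or the sequence. I plan to handle it by the same compactness argument with $z$, the indices and the sequence allowed to vary. Suppose no such $T_\delta$ existed. Then there would be sequences $\lambda_n\to0^+$ (or $\lambda_n$ bounded away from $0$, which must be treated too) and calibrated curves with excursion times $t_n^2-t_n^1\to\infty$ outside $\mathcal M^\delta$. For $\lambda_n\to0$, the argument above applies verbatim, because only the uniform bounds of Lemmas \ref{s3} and \ref{gameq} were used. If the statement is read as fixing a sequence $\lambda_n\to0$, only this case is needed. The delicate point is making sure the limiting measure really has zero action: this needs the boundedness of $u_*$ and the time-average argument rather than pointwise calibration, and I would check it carefully via Lemma \ref{TM}.
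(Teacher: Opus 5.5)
Your argument is correct, but it is organized differently from the paper's. The paper never passes to a limit curve. Assuming $t_n^2-t_n^1\to+\infty$, it takes a weak$^*$ limit $\tilde\mu$ of the time averages of $\delta_{(\gamma_n,\dot\gamma_n)}$ over the excursion windows $[-t_n^2,-t_n^1]$ themselves. Closedness and $\int_{TM}(L+c_0)\,d\tilde\mu=0$ then follow directly: the boundary terms (in $f$ and in $u_{\lambda_n}$) are bounded by Lemma \ref{s3}, the window length diverges, and the $\lambda_n$-terms vanish. So $\tilde\mu$ is a Mather measure living over $M\setminus\mathcal M^\delta$. You instead first produce a backward $u_*$-calibrated critical curve $\eta$ on $(-\infty,0]$ that stays outside $\mathcal M^\delta$. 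You get it by a diagonal Arzel\`a--Ascoli extraction and Lemma \ref{TM}, which is essentially Lemma \ref{alp0} with $T_\delta$ replaced by $+\infty$. You then run the classical Krylov--Bogoliubov averaging along $\eta$. The paper's route is shorter: one compactness step, with no need to extract $u_{\lambda_n}\to u_*$ or to invoke lower semicontinuity. Yours isolates a statement purely about $H$: no backward calibrated curve of a critical solution stays at distance $\geqslant\delta$ from $\mathcal M$ for all negative times. In your route the ``delicate point'' you flag is in fact immediate, since exact calibration of $\eta$ and boundedness of $u_*$ give zero action. As for uniformity, your first three paragraphs prove exactly what the paper's proof proves, namely $\sup_n(t_n^2-t_n^1)<+\infty$ for the given sequence, and that is all Lemma \ref{alp0} uses. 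Because Lemmas \ref{s3} and \ref{gameq} are uniform in $z$ and $\lambda$, either argument also yields a bound uniform in $z$ for all sufficiently small $\lambda$. The case of $\lambda_n$ bounded away from $0$ is not part of the statement and need not be treated.
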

\begin{proof}
We argue by contradiction. Assume there are subsequences, which are still denoted by $(t^1_n)_n$ and $(t^2_n)_n$, which satisfy $t^2_n-t^1_n\to +\infty$ as $n\to+\infty$. Define
\[\int_{TM} f(x,\dot x)\, d\tilde{\mu}_{n}:=\frac{1}{t^2_{n}-t^1_{n}}\int_{-t^2_{n}}^{-t^1_{n}}f(\gamma_{n}(s),\dot{\gamma}_{n}(s))\, ds,\quad \forall f\in C_c(TM).\]
According to Lemma \ref{gameq}, $\gamma_{n}$ is equi-Lipschitz continuous. Thus, there is a subsequence of $(\tilde\mu_{n})_n$, still denoted by $(\tilde\mu_{n})_n$, which weakly$^*$-converges to a probability measure $\tilde\mu$. In the following, we are going to show that $\tilde\mu$ is a Mather measure.

\medskip

{\bf The measure $\tilde{\mu}$ is closed.} Let $f\in C^1(M)$, we have
\begin{align*}
\int_{TM} df(\dot x)\, d\tilde{\mu}&=\lim_{n\to +\infty}\frac{1}{t^2_n-t^1_n}\int_{-t^2_n}^{-t^1_n}\frac{df}{ds}(\gamma_n(s))\, ds
\\ &=\lim_{n\to +\infty}\frac{f(\gamma_n(-t^1_n))-f(\gamma_n(-t^2_n))}{t^2_n-t^1_n}
\leqslant \lim_{n\to +\infty}\frac{2\|f\|_\infty}{t^2_n-t^1_n}=0.
\end{align*}

{\bf The measure $\tilde{\mu}$ is minimizing.} Since $u_\lambda$ is uniformly bounded, we have
\begin{align*}
&\int_{TM} [L(x,\dot x)+c_0]\, d\tilde\mu
\\ &=\lim_{n\to+\infty}\int_{TM} [L(x,\dot x)+c_0-\lambda_n a(x)u_{\lambda_n}(x)+A\lambda_n]\, d\tilde{\mu}_n
\\ &=\lim_{n\to+\infty}\frac{1}{t^2_n-t^1_n}\int_{-t^2_n}^{-t^1_n}\bigg[L(\gamma_n(s),\dot{\gamma}_n(s))+c_0-\lambda_n a(\gamma_n(s))u_{\lambda_n}(\gamma_n(s))+A\lambda_n\bigg]\, ds
\\ &=\lim_{n\to+\infty}\frac{u_{\lambda_n}(\gamma_n(-t^1_n))-u_{\lambda_n}(\gamma_n(-t^2_n))}{t^2_n-t^1_n}=0.
\end{align*}
By definition of $\tilde\mu_n$, ${\rm supp}(\mu)$ is not contained in $\mathcal M$, which leads to a contradiction.
\end{proof}

\begin{lem}\label{alp0}
Given $\delta>0$, suppose there exist $z\in M$ and a sequence $\lambda_n\to 0^+$ such that the calibrated curve $\gamma_n := \gamma^z_{\lambda_n}$ leaves $M_i^\delta$ at time $-t_n^1$ and enters $M_j^\delta$ at time $-t_n^2$, where $i,j\in \{1,\dots,k\}$ with $i\neq j$. Moreover, assume that $\gamma_n$ does not pass through $\overline{\mathcal M}^\delta$ on the interval $(-t_n^2, -t_n^1)$. Let $u_*$ be a limit of the sequence $(u_{\lambda_n})_n$. Then there exists a $u_*$-calibrated curve $\alpha_\delta$ leaves $M^{\delta}_i$ at time $0$ and enters $M^{\delta}_{j}$ at time $-t_\delta$, and does not pass through $\overline{\mathcal M}^\delta$ on $(-t_\delta,0)$.
\end{lem}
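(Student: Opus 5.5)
The plan is to pass to the limit in the time-shifted curves. By the preceding lemma, the transit times $t_n^2-t_n^1$ are bounded by $T_\delta$. So after extracting a subsequence, $t_n^2-t_n^1\to t_\delta\in[0,T_\delta]$. In fact $t_\delta>0$: on $\partial M_i^\delta$ and $\partial M_j^\delta$ the two endpoints are at distance at least $\dist(M_i,M_j)-2\delta$, which we may assume positive after shrinking $\delta$. By Lemma \ref{gameq} the curves are equi-Lipschitz, so the transit time is bounded below uniformly.

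Define the shifted curves $\beta_n(s):=\gamma_n(s-t_n^1)$ for $s\le 0$. Each $\beta_n$ is still a $(u_{\lambda_n},\bar L)$-calibrated curve on $(-\infty,0]$, since calibration is invariant under time translation of the restriction. The family $(\beta_n)_n$ is equi-Lipschitz by Lemma \ref{gameq}. By Arzel\`a--Ascoli and a diagonal argument on the intervals $[-m,0]$, a subsequence converges locally uniformly to a Lipschitz curve $\alpha_\delta:(-\infty,0]\to M$. We also pass to the subsequence along which $u_{\lambda_n}\to u_*$ uniformly.

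To show that $\alpha_\delta$ is $u_*$-calibrated, fix $s<t\le 0$. From
\[
u_{\lambda_n}(\beta_n(t))-u_{\lambda_n}(\beta_n(s))=\int_s^t\big[L(\beta_n,\dot\beta_n)+c_0-\lambda_n a(\beta_n)u_{\lambda_n}(\beta_n)+A\lambda_n\big]\,d\tau,
\]
the left side tends to $u_*(\alpha_\delta(t))-u_*(\alpha_\delta(s))$. The $\lambda_n$ terms vanish because $u_{\lambda_n}$ is uniformly bounded (Lemma \ref{s3}). Lemma \ref{TM} then gives
\[
\int_s^t [L(\alpha_\delta,\dot\alpha_\delta)+c_0]\,d\tau\le u_*(\alpha_\delta(t))-u_*(\alpha_\delta(s)).
\]
The reverse inequality follows from $u_*\prec L+c_0$, since $u_*$ is a solution of \eqref{E0} (Proposition \ref{soleq}). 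Hence equality holds on every subinterval, and $\alpha_\delta$ is calibrated.

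For the geometric properties, closedness of $\partial M_i^\delta$ and $\partial M_j^\delta$ gives $\alpha_\delta(0)=\lim\gamma_n(-t_n^1)\in\partial M_i^\delta$. Equicontinuity together with $t_n^2-t_n^1\to t_\delta$ gives $\alpha_\delta(-t_\delta)\in\partial M_j^\delta$.

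The main obstacle is the open condition on $(-t_\delta,0)$. For each $\tau$ in this interval, $\beta_n(\tau)\notin\overline{\mathcal M}^\delta$ holds eventually. Passing to the limit, this only yields $\alpha_\delta(\tau)\notin \mathcal M^\delta$ (the open neighborhood), so $\alpha_\delta$ could touch $\partial\mathcal M^\delta$ at an interior time. I would handle this by using the lemma's freedom in $\delta$ and applying the argument with slightly perturbed parameters. Concretely, apply the hypothesis at level $\delta'$ slightly larger than $\delta$: the curve leaves $M_i^{\delta'}$ and enters $M_j^{\delta'}$ later and earlier respectively, and stays outside $\overline{\mathcal M}^{\delta'}$ on the corresponding subinterval. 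This is valid because it remains in the complement of $\overline{\mathcal M}^{\delta}$, which contains the complement of $\overline{\mathcal M}^{\delta'}$.

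Alternatively, accept the weaker ``$\alpha_\delta(\tau)\notin\mathcal M^\delta$'' statement, which is what later uses need: avoiding $\mathcal M$ up to distance $\delta$. Failing that, redefine the exit and entry times of $\alpha_\delta$ as the last exit from $\overline{M_i^\delta}$ and the first entry into $\overline{M_j^\delta}$ before meeting another component. This restricts to a maximal subinterval where the strict condition holds and re-shifts time; calibration is preserved under restriction and translation.
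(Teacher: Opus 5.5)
You follow the paper's argument up to the last clause: shift by $t_n^1$, extract a uniformly convergent subsequence via Lemma \ref{gameq} and the bound $T_\delta$, get $\int_s^t[L(\alpha_\delta,\dot\alpha_\delta)+c_0]\leqslant u_*(\alpha_\delta(t))-u_*(\alpha_\delta(s))$ from Lemma \ref{TM}, and close with $u_*\prec L+c_0$ (which the paper leaves implicit).

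The paper then asserts that $\alpha_\delta$ avoids $\overline{\mathcal M}^\delta$ on $(-t_\delta,0)$ ``by uniform convergence''. You are right that this only gives $\dist(\alpha_\delta(\tau),\mathcal M)\geqslant\delta$. The same problem affects the requirement $\alpha_\delta(\tau)\notin\overline{M_i^\delta}\cup\overline{M_j^\delta}$ built into ``leaves/enters''. However, neither of your first two remedies proves the statement as written.
\begin{itemize}
\item In the $\delta'>\delta$ argument, the inclusion $(\overline{\mathcal M}^{\delta'})^c\subset(\overline{\mathcal M}^{\delta})^c$ runs the wrong way for what you need. From $\gamma_n(s)\notin\overline{\mathcal M}^{\delta}$ you cannot infer $\gamma_n(s)\notin\overline{\mathcal M}^{\delta'}$: the curves may dip into the shell $\{\delta<\dist(\cdot,\mathcal M)\leqslant\delta'\}$ mid-excursion, possibly near a third class. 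Since the minimal distance along the excursion may tend to $\delta$ as $n\to\infty$, no fixed $\delta'$ works. Moreover, the output would have its endpoints at level $\delta'$, not $\delta$.
\item In the restriction argument, the contact set $\{\tau:\dist(\alpha_\delta(\tau),\mathcal M)=\delta\}$ splits into disjoint closed pieces according to which $\partial M_l^\delta$ is touched. So a gap adjacent to the $i$-piece does exist. But its other end may lie on $\partial M_l^\delta$ with $l\neq j$, for instance if the limit grazes a third neighbourhood tangentially. You then get an excursion to the wrong class.
\end{itemize}

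What actually closes the proof is your weakened statement: $\alpha_\delta$ is $u_*$-calibrated, $\alpha_\delta(0)\in\partial M_i^\delta$, $\alpha_\delta(-t_\delta)\in\partial M_j^\delta$, and $\dist(\alpha_\delta(\tau),\mathcal M)\geqslant\delta$ on $(-t_\delta,0)$. Lemmas \ref{tinf}, \ref{xi1} and \ref{cononM} use nothing beyond calibration and these endpoint conditions, so the weakened version suffices downstream. This is a gap you share with the paper rather than an idea you are missing. You should commit to the weakened formulation instead of offering the $\delta'$ argument as a proof.

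A minor point: $\delta$ is fixed in the statement, so you cannot shrink it. Still, $t_\delta>0$ holds as soon as $\overline{M_i^\delta}\cap\overline{M_j^\delta}=\emptyset$, i.e.\ for $\delta<\frac12\min_{l\neq l'}\dist(M_l,M_{l'})$. That is the only regime that matters in the later limits $\delta_N\to0^+$.
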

\begin{proof}
Taking a subsequence of $(\lambda_n)_n$, still denoted by $(\lambda_n)_n$,
we may assume that $(u_{\lambda_n})_n$ converges uniformly to $u_*$. Define $\alpha_{n}(t):=\gamma_{n}(t-t^1_{n})$. Since the family $(\gamma_n)_n$ is equi-Lipschitz continuous, there exists
a subsequence of $(\alpha_n)_n$, still denoted by $(\alpha_n)_n$, which
converges uniformly to a curve $\alpha_\delta$ on the interval
$[-T_\delta,0]$. Notice that $\alpha_{n}(0)=\gamma_{n}(-t^1_{n})$ and $\alpha_{n}(t^1_{n}-t^2_{n})=\gamma_{n}(-t^2_{n})$. By the uniform convergence of $(\alpha_{n})_n$, the limit curve $\alpha_\delta$ leaves $M^{\delta}_i$ at the time $0$ and enters $M^{\delta}_{j}$ at the time $-t_\delta$, where $-t_\delta$ is a limit of $t^1_{n}-t^2_{n}$. Moreover, $\alpha_\delta$ does not pass through $\overline{\mathcal M}^\delta$ on $(-t_\delta,0)$. According to Lemma \ref{TM}, we have
\begin{align*}
&\int_{-t}^0[L(\alpha_\delta(s),\dot\alpha_\delta(s))+c_0]\, ds
\\ &\leqslant \liminf_{n\to+\infty}\int_{-t}^0 [L(\alpha_{n}(s),\dot{\alpha}_{n}(s))+c_0]\, ds
\\ &=\liminf_{n\to+\infty}\int_{-t}^0 \bigg[L(\alpha_{n}(s),\dot{\alpha}_{n}(s))+c_0-\lambda_{n} a(\alpha_{n}(s))u_{\lambda_{n}}(\alpha_{n}(s))+A\lambda_{n}\bigg]\, ds
\\ &=\lim_{n\to +\infty}(u_{\lambda_{n}}(\alpha_{n}(0))-u_{\lambda_{n}}(\alpha_{n}(-t)))
\\ &=u_*(\alpha_\delta(0))-u_*(\alpha_\delta(-t)),\quad \forall t\in[-T_\delta,0],
\end{align*}
which implies that $\alpha_\delta$ is a $u_*$-calibrated curve.
\end{proof}

\begin{lem}\label{tinf}
Assume that there exist a sequence $\delta_n\to 0^+$, a solution $u_*$ of
\eqref{E0}, and a sequence of $u_*$-calibrated curves $\alpha_n$ such that
$\alpha_n$ leaves $M_i^{\delta_n}$ at time $0$ and enters $M_j^{\delta_n}$
at time $-t_n$, where $i,j\in\{1,\dots,k\}$ with $i\neq j$.
Then $t_n\to +\infty$ as $\delta_n\to 0^+$.
\end{lem}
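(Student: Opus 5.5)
Argue by contradiction. Suppose along a subsequence $t_n\le T$ for some fixed $T>0$ (with $t_n>0$, since $M_i^{\delta_n}$ and $M_j^{\delta_n}$ are disjoint for small $\delta_n$). Since $u_*$ is a solution of \eqref{E0}, it is Lipschitz (Proposition \ref{soleq}), and the argument of Lemma \ref{gameq} with $\lambda=0$ shows that the $u_*$-calibrated curves $\alpha_n$ are equi-Lipschitz with a constant depending only on $u_*$ and $L$. Passing to a further subsequence, we may assume $t_n\to t_\infty\in[0,T]$ and $\alpha_n\to\alpha$ uniformly on $[-T,0]$. We have $\alpha_n(0)\in\partial M_i^{\delta_n}$ and $\alpha_n(-t_n)\in\partial M_j^{\delta_n}$, so $\operatorname{dist}(\alpha_n(0),M_i)=\delta_n\to0$ and $\operatorname{dist}(\alpha_n(-t_n),M_j)=\delta_n\to0$. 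By compactness of $M_i,M_j$ and equicontinuity, $\alpha(0)=:y_i\in M_i$ and $\alpha(-t_\infty)=:y_j\in M_j$. Because $M_i\cap M_j=\emptyset$ and both are compact, $d(y_i,y_j)>0$, so $t_\infty>0$ by equi-Lipschitz continuity.

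Next I show that $\alpha$ is $u_*$-calibrated on $[-t_\infty,0]$. Apply Lemma \ref{TM} exactly as in the proof of Lemma \ref{alp0}: lower semicontinuity gives
\[\int_{-t_\infty}^0[L(\alpha,\dot\alpha)+c_0]\,ds\le\liminf_n\int_{-t_\infty}^0[L(\alpha_n,\dot\alpha_n)+c_0]\,ds=u_*(y_i)-u_*(y_j),\]
where $|t_n-t_\infty|\to0$ and the uniform bound on the Lagrangian along equi-Lipschitz curves let us replace $t_n$ by $t_\infty$. The reverse inequality holds because $u_*\prec L+c_0$. Hence
\[h_{t_\infty}(y_j,y_i)\le u_*(y_i)-u_*(y_j).\]

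Now derive the contradiction using the static-class structure. Since $y_i,y_j\in\mathcal A$, the function $-h^\infty(\cdot,y_j)$ is a subsolution vanishing at $y_j$ (Proposition \ref{h>w}), and $h^\infty(y,y)=0$ for $y\in\mathcal A$. The chain
\[h^\infty(y_j,y_i)\le h^\infty(y_j,y_j)+h_{t_\infty}(y_j,y_i)=h_{t_\infty}(y_j,y_i)\]
should hold by concatenating near-optimal long loops at $y_j$ with the calibrated piece. Then, using Proposition \ref{h>w}(ii) with $w=u_*$,
\[d_H(y_i,y_j)=h^\infty(y_i,y_j)+h^\infty(y_j,y_i)\le h^\infty(y_i,y_j)+u_*(y_i)-u_*(y_j).\]
By Proposition \ref{h>w}(ii) again, $u_*(y_j)-u_*(y_i)\le h^\infty(y_i,y_j)$, and we need equality. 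To get it, extend $\alpha$ backward: $\alpha_n$ are calibrated on $(-\infty,0]$, so the limit $\alpha$ is $u_*$-calibrated on $(-\infty,0]$ and passes through $y_j\in\mathcal A$.

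The cleanest route to $d_H(y_i,y_j)=0$ is to observe that a $u_*$-calibrated curve through a point of $\mathcal A$ stays in $\mathcal A$ and in a single static class. This is the same fact used in the proof of Lemma \ref{umaxstab}: for $y\in\mathcal A$, the calibrated curve through $y$ is the projection of the Aubry orbit, and by uniqueness of calibrated curves at Aubry points its image lies in the static class of $y$. Since $\alpha(-t_\infty)=y_j\in M_j$, the curve $\alpha$ restricted to $[-t_\infty,0]$ lies in the static class of $y_j$. But $\alpha(0)=y_i\in M_i$ with $i\ne j$, a contradiction. Hence $t_n\to+\infty$.

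The main obstacle is this last invariance step: a $u_*$-calibrated curve through an Aubry point must follow the Aubry orbit, by the graph property and differentiability of critical solutions on $\mathcal A$. I would invoke it from weak KAM theory in the same way Lemma \ref{umaxstab} does. If that is unavailable, I would fall back on the barrier inequality above combined with Lemma \ref{h=w}.
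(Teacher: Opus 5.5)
Your argument is correct and is essentially the paper's proof. You extract a uniform limit of the $\alpha_n$ on $[-T,0]$ and show via Lemma \ref{TM} that it is a $u_*$-calibrated curve joining $M_j$ to $M_i$ in finite time $t_\infty>0$. You then contradict the standard weak KAM fact that a calibrated curve through an Aubry point follows the Aubry orbit and stays in one static class. The paper leaves that fact implicit when it says a point $\alpha_*(t_0)\notin\mathcal M$ "leads to a contradiction", whereas you state it explicitly. The barrier-inequality detour is unnecessary and would not close the argument as a fallback: it only gives $h^\infty(y_j,y_i)=u_*(y_i)-u_*(y_j)$, which is compatible with $d_H(y_i,y_j)>0$, and Lemma \ref{h=w} already presupposes $d_H=0$.
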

\begin{proof}
We argue by contradiction. Assume that there exists a subsequence $\delta_n \to 0^+$ such that the corresponding times $t_n$ are uniformly bounded from above by some $T>0$. Since $(\alpha_n)_n$ is equi-Lipschitz continuous, up to a subsequence still denoted by $(\alpha_n)_n$, we may assume that $(\alpha_n)_n$ converges uniformly to $\alpha_*$ on $[-T,0]$. By Lemma \ref{TM}, $\alpha_*$ is a $u_*$-calibrated curve. Since dist$(\alpha_n(0),M_i)\to 0$ and dist$(\alpha_n(-t_n),M_j)\to 0$, we conclude that $\alpha_*(0)\in M_i$ and $\alpha_*(-t_*)\in M_j$, where $t_*$ is a limit of $(t_n)_n$. Since $M_i$ does not intersect $M_j$, there is a point $t_0\in (-t_*,0)$ such that $\alpha_*(t_0)\notin \mathcal M$, which leads to a contradiction.
\end{proof}

\begin{lem}\label{xi1}
If there is $z\in M$, sequences $\delta_n\to 0^+$ and $\lambda_n\to 0^+$, such that $\gamma_n:=\gamma^z_{\lambda_n}$ visits $M^{\delta_n}_{i_1},\dots,M^{\delta_n}_{i_p},M^{\delta_n}_{i_1}$ in order, then $M_{i_1},\dots,M_{i_p}$ belong to the same static class.
\end{lem}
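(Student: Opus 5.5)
The plan is to show that the visiting pattern survives the limits $\lambda_n\to0^+$ and then $\delta_n\to0^+$, and that the resulting loop forces the barrier distances $d_H$ to vanish. I will build a closed chain of $u_*$-calibrated pieces joining the classes $M_{i_1},\dots,M_{i_p},M_{i_1}$ in this order, and read off from it a closed chain of barrier values adding up to zero.

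\textbf{Step 1 (one hop at a time).} Fix $\delta=\delta_m$. For $n\geqslant m$ the curve $\gamma_n$ visits $M_{i_1}^{\delta_m},\dots,M_{i_p}^{\delta_m},M_{i_1}^{\delta_m}$ in order, since $\delta_n\leqslant\delta_m$ and the hops can be refined; if this needs care, I will first pass to a subsequence with $\delta_n$ fixed and diagonalize at the end. For each hop $M_{i_l}^\delta\to M_{i_{l+1}}^\delta$, the lemma preceding Lemma \ref{alp0} bounds the transit time by $T_\delta$. Lemma \ref{alp0} then gives a $u_*$-calibrated curve $\alpha^l_\delta$ that leaves $M_{i_l}^\delta$ at time $0$ and enters $M_{i_{l+1}}^\delta$ at time $-t^l_\delta$. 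Here $u_*$ is a common uniform limit of the $u_{\lambda_n}$, fixed once by a diagonal extraction. Calibration gives
\[
u_*(\alpha^l_\delta(0))-u_*(\alpha^l_\delta(-t^l_\delta))=\int_{-t^l_\delta}^0[L+c_0]\,ds\geqslant h_{t^l_\delta}\big(\alpha^l_\delta(-t^l_\delta),\alpha^l_\delta(0)\big).
\]

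\textbf{Step 2 (passing to barriers).} Now let $\delta=\delta_m\to0$. Lemma \ref{tinf} gives $t^l_{\delta_m}\to+\infty$. Up to subsequences the endpoints converge to $y_{l+1}\in M_{i_{l+1}}$ and $x_l\in M_{i_l}$. By Proposition \ref{htlip} the $h_t$ are equi-Lipschitz for $t>1$, and $h^\infty=\liminf h_t\leqslant\liminf_m h_{t_m}$. Hence
\[
u_*(x_l)-u_*(y_{l+1})\geqslant h^\infty(y_{l+1},x_l),
\]
and since $u_*$ is a subsolution (Proposition \ref{h>w}(ii)) this is in fact an equality. Watch the direction: the backward-time curve runs from $y_{l+1}$ to $x_l$.

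\textbf{Step 3 (closing the loop).} Inside each class, Lemma \ref{h=w} gives $h^\infty(x_l,y_l)=u_*(y_l)-u_*(x_l)$, with indices taken cyclically so that $y_{p+1}\in M_{i_1}$ is compared with $x_1$. Summing around the cycle, the $u_*$ terms telescope to zero:
\[
\sum_l\big[h^\infty(y_{l+1},x_l)+h^\infty(x_l,y_l)\big]=0 .
\]
For any two consecutive points $P,Q$ on this cycle, the triangle inequality in Proposition \ref{h>w}(iii) gives $h^\infty(P,Q)+h^\infty(Q,P)\leqslant$ (the sum above) $=0$. Combined with $d_H\geqslant0$, which follows from the triangle inequality and $h^\infty(P,P)=0$ on $\mathcal A$, this yields $d_H(P,Q)=0$. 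In particular $d_H(x_l,y_{l+1})=0$, so $M_{i_l}$ and $M_{i_{l+1}}$ are the same static class.

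\textbf{Main obstacle.} I expect the hardest part to be the bookkeeping in Step 1. It has two pieces: ensuring that all hops of a single $\gamma_n$ converge simultaneously to curves calibrated for the same $u_*$, and justifying the use of $\delta_m$-neighbourhoods for a visiting pattern that was stated with $\delta_n$. To handle both, I will fix the subsequence for $u_*$ first and then run a diagonal argument over $m$.
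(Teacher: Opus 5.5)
Your proposal is correct and is essentially the paper's proof: you fix one limit $u_*$, apply Lemma \ref{alp0} to each hop at a fixed scale, and use Lemma \ref{tinf} with the equi-Lipschitz bound of Proposition \ref{htlip} to get $h^\infty(y_{l+1},x_l)\leqslant u_*(x_l)-u_*(y_{l+1})$; you then close the cycle with Lemma \ref{h=w} inside each class and the triangle inequality, exactly as the paper does with its points $\xi_m,\eta_{m+1}$. The transfer of the visiting pattern from scale $\delta_n$ to a fixed larger $\delta_N$, which you single out as the main obstacle, is simply asserted in the paper after taking $\delta_n$ decreasing, so you are at the same level of rigor there, although your fallback of a subsequence \emph{with $\delta_n$ fixed} is not meaningful, since $\delta_n\to 0^+$.
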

\begin{proof}
Up to a subsequence, we may assume that $(\delta_n)_n$ is decreasing. By passing to a further subsequence, still denoted by $(\lambda_{n})_n$, we may also assume that $(u_{\lambda_{n}})_n$ uniformly converges to $u_*$. When $n\geqslant N$, we have $\delta_{n}\leqslant \delta_N$, and hence $\gamma_{n}$ visits $M^{\delta_{N}}_{i_1},\dots,M^{\delta_N}_{i_p},M^{\delta_N}_{i_1}$ in order. By Lemma \ref{alp0}, there exists a $u_*$-calibrated curve $\alpha^m_N$ which leaves $M^{\delta_N}_{i_m}$ at time $0$ and enters $M^{\delta_N}_{i_{m+1}}$ at time $-t^m_N$. Moreover, it does not pass through $\overline{\mathcal M}^{\delta_N}$ on the interval $(-t^m_N,0)$. Here $m=1,\dots,p$, and we set $M_{i_{p+1}}=M_{i_1}$.

Since $M$ is compact, up to a subsequence still denoted by $\delta_N\to 0^+$, we may assume that $\alpha^m_N(0)\to \xi_m\in M_{i_m}$ and $\alpha^m_N(-t^m_N)\to \eta_{m+1}\in M_{i_{m+1}}$. Since $\alpha^m_N$ is a $u_*$-calibrated curve, we have
\[h_{t^m_N}(\alpha^m_N(-t^m_N),\alpha^m_N(0))=\int_{-t^m_N}^0 [L(\alpha^m_N(s),\dot{\alpha}^m_N(s))+c_0]\, ds,\]
By Proposition \ref{htlip}, $(x,y)\mapsto h_t(x,y)$ is $\kappa$-Lipschitz continuous. Hence,
\begin{align*}
&h_{t^m_N}(\eta_{m+1},\xi_m)
\\ &\leqslant h_{t^m_N}(\alpha^m_N(-t^m_N),\alpha^m_N(0))+\kappa d(\alpha^m_N(-t^m_N),\eta_{m+1})+\kappa d(\alpha^m_N(0),\xi_m)
\\ &=\int_{-t^m_N}^0 [L(\alpha^m_N(s),\dot{\alpha}^m_N(s))+c_0]\, ds+\kappa d(\alpha^m_N(-t^m_N),\eta_{m+1})+\kappa d(\alpha^m_N(0),\xi_m).
\end{align*}
By Lemma \ref{tinf}, $t^m_N\to+\infty$ as $\delta_N\to 0^+$. Therefore, by the definition of the Peierls barrier, we obtain
\begin{align*}
&h^\infty(\eta_{m+1},\xi_m)
\\ &\leqslant \liminf_{N\to +\infty}h_{t^m_N}(\eta_{m+1},\xi_m)
\\ &\leqslant \liminf_{N\to +\infty}\int_{-t^m_N}^0 [L(\alpha^m_N(s),\dot{\alpha}^m_N(s))+c_0]\, ds+\kappa d(\alpha^m_N(-t^m_N),\eta_{m+1})+\kappa d(\alpha^m_N(0),\xi_m)
\\ &=\liminf_{N\to+\infty}\int_{-t^m_N}^0 [L(\alpha^m_N(s),\dot{\alpha}^m_N(s))+c_0]\, ds
\\ &=\lim_{N\to+\infty}(u_*(\alpha^m_N(0))-u_*(\alpha^m_N(-t^m_N)))=u_*(\xi_m)-u_*(\eta_{m+1}).
\end{align*}
Using the triangle inequality, we get
\[d_H(\xi_1,\eta_2)=h^\infty(\xi_1,\eta_2)+h^\infty(\eta_2,\xi_1)
\leqslant \sum_{i=1}^{p}\bigg[h^\infty(\eta_{i+1},\xi_i)+h^\infty(\xi_{i+1},\eta_{i+1})\bigg],\]
where $\xi_{p+1}:=\xi_1$ and $\xi_1,\eta_{p+1}\in M_{i_1}$. By Lemma \ref{h=w}, we obtain
\[d_H(\xi_1,\eta_2)=\sum_{i=1}^{p}\bigg[u_*(\xi_i)-u_*(\eta_{i+1})+u_*(\eta_{i+1})-u_*(\xi_{i+1})\bigg]=0.\]
Similarly, one can prove that $d_H(\xi_m,\eta_{m+1})=0$, where $m=1,\dots,p$. The proof is now complete.
\end{proof}

\begin{cor}\label{twice}
Given $z \in M$, there exist constants $\delta_0 > 0$ and $\lambda_0 > 0$ such that, for all $\lambda \in (0,\lambda_0)$, the calibrated curve $\gamma^z_\lambda$ does not visit the neighborhood $M_i^{\delta_0}$ of any static class $M_i$ more than once.
\end{cor}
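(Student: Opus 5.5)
We argue by contradiction, combining Lemma \ref{xi1} with the fact that distinct $M_i$ are distinct static classes. Fix $z\in M$ and suppose the conclusion fails. Then for every pair $\delta_0,\lambda_0>0$ there is $\lambda\in(0,\lambda_0)$ such that $\gamma^z_\lambda$ visits some $M_i^{\delta_0}$ at least twice. Applying this with $\delta_0=\lambda_0=1/n$ gives sequences $\delta_n\to0^+$ and $\lambda_n\to0^+$, together with indices $i_n$, such that $\gamma_n:=\gamma^z_{\lambda_n}$ leaves $M_{i_n}^{\delta_n}$ and later (in backward time) re-enters $M_{i_n}^{\delta_n}$. Since there are only $k$ classes, we pass to a subsequence with $i_n\equiv i_1$ fixed.

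The next step is to write the excursion as a visit in order, in the sense of the definition above. Fix $n$ and take $\delta_n$ smaller than half the minimal distance between distinct $M_i$, so the neighborhoods $M_i^{\delta_n}$ are pairwise disjoint. Between the exit from $M_{i_1}^{\delta_n}$ and the re-entry into it, the curve $\gamma_n$ may pass through neighborhoods of other classes. We record the successive neighborhoods it enters and leaves. Each time it leaves one neighborhood, the next neighborhood it enters is well defined, because $\overline{\mathcal M}^{\delta_n}$ is the disjoint union of the closed neighborhoods $\overline{M_i^{\delta_n}}$. On the time interval in between, the curve avoids $\overline{\mathcal M}^{\delta_n}$.

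We also need the list of visits to be finite. By Lemma \ref{gameq}, $\gamma_n$ is Lipschitz, so crossing from one neighborhood to another distinct one takes a time bounded below. The re-entry time is finite, so only finitely many transitions occur. To obtain a clean cycle, we take the first repeated index along this finite list and truncate. This yields a cyclic pattern $M_{i_1}^{\delta_n},M_{i_2}^{\delta_n},\dots,M_{i_p}^{\delta_n},M_{i_1}^{\delta_n}$ with $p\ge1$. We relabel so that the repeated class is $M_{i_1}$. Since the sequence $(i_2,\dots,i_p)$ has bounded length and finitely many choices, we pass to a further subsequence on which the pattern is independent of $n$.

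Lemma \ref{xi1} then shows that $M_{i_1},\dots,M_{i_p}$ belong to the same static class. If $p\ge2$, this contradicts the fact that $M_{i_1}\neq M_{i_2}$ are distinct static classes. The main obstacle is the degenerate case $p=1$, where the curve leaves $M_{i_1}^{\delta_n}$ and returns to it without visiting any other class. The proof of Lemma \ref{xi1} relies on Lemmas \ref{alp0} and \ref{tinf}, which are stated for $i\neq j$. For $p=1$ we would instead rerun that argument with $i=j$. The limiting $u_*$-calibrated curve then goes from $M_{i_1}$ to $M_{i_1}$ and lies outside $\mathcal M$ in between. For the time bounds: if the excursion times stay bounded, the limiting calibrated curve joins two points of $M_{i_1}$, and Lemma \ref{h=w} forces it to realize $h^\infty$. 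If the excursion times are unbounded, the empirical-measure argument of the unnamed lemma above produces a Mather measure not supported in $\mathcal M$, which is a contradiction. In the bounded case the same barrier computation shows the curve stays within the Aubry set, since $d_H=0$ along it, and in particular it meets $\mathcal A\setminus\mathcal M$. We then need to rule out this excursion, possibly by redefining "visit" to count only returns separated by passage outside $\overline{\mathcal M}^{\delta_0}$ near other classes. We expect this case to be the delicate one. With it settled, $\delta_0$ and $\lambda_0$ can be chosen as claimed.
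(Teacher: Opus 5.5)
Your argument is the paper's own: negate the statement along sequences $\delta_n,\lambda_n\to0^+$, extract a cycle $M^{\delta_n}_{i_1},\dots,M^{\delta_n}_{i_p},M^{\delta_n}_{i_1}$ with distinct indices, freeze the pattern using $k<\infty$, and contradict Lemma \ref{xi1}. Your finiteness remark via Lemma \ref{gameq} is a harmless addition.

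The case $p=1$ that you leave open does not need to be settled, and should not be settled dynamically. In the paper, ``leaves $U_1$ and enters $U_2$'' is defined only for disjoint open sets $U_1,U_2$. So consecutive neighborhoods in a visit sequence are distinct, and ``visiting $M_i^{\delta_0}$ more than once'' means that $i$ recurs in such a sequence. The first repetition then automatically has $p\geqslant 2$. Excursions that leave $M_i^{\delta_0}$ and return without reaching another class are simply not counted. This is exactly the reading you propose in passing, and it is all that Lemmas \ref{alp} and \ref{cononM} use.

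Trying to rule those excursions out would fail. As your own analysis shows, their limits are $u_*$-calibrated curves running through $\mathcal A\setminus\mathcal M$ inside a single static class, and that configuration is not contradictory.
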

\begin{proof}
Assume that there exist sequences $\delta_n\to 0^+$ and $\lambda_{n}\to 0^+$ such that each $\gamma_{n}:=\gamma^z_{\lambda_n}$ visits $M^{\delta_n}_i$ twice. Then, for each $n$, there exist indices $i_1,\dots,i_q\in\{1,\dots,k\}$, which are distinct from each other, such that $\gamma_n$ visits $M^{\delta_n}_{i_1},\dots,M^{\delta_n}_{i_q},M^{\delta_n}_{i_1}$ in order. Since $k$ is finite, there are only finitely many possible such sequences $(i_1,\dots,i_q)$. Therefore, up to extracting a subsequence of $(\lambda_n)_n$, we may assume that the sequence $(i_1,\dots,i_q)$ is independent of $n$, and that $\gamma_n$ visits $M^{\delta_n}_{i_1},\dots,M^{\delta_n}_{i_q},M^{\delta_n}_{i_1}$ in order for all $n$. By Lemma \ref{xi1}, $M_{i_1},\dots,M_{i_q}$ belong to the same static class, which yields a contradiction.
\end{proof}

\begin{lem}\label{eM1}
For each $\delta>0$ and each $z\in M$, there exists $\lambda_0>0$ such that, for all
$\lambda\in(0,\lambda_0)$, the calibrated curve $\gamma^z_\lambda$ enters $M^\delta_1$
at some time $-t_{z,\lambda}$.
\end{lem}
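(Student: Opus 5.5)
We argue by contradiction. Suppose there are $\delta>0$, $z\in M$ and a sequence $\lambda_n\to0^+$ such that $\gamma_n:=\gamma^z_{\lambda_n}$ never enters $M_1^\delta$. Thus $\gamma_n((-\infty,0])\cap M_1^\delta=\emptyset$ for all $n$. We build the measures $\tilde\mu_{z,\lambda_n}$ as in \eqref{muz}, taking weak$^*$ limits of the time averages along $\gamma_n$ over $[-t,0]$ as $t\to+\infty$. Up to a subsequence, these converge to some $\tilde\mu_z$.

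The first step is to locate the support. Each $\tilde\mu_{z,\lambda_n}$ is a weak$^*$ limit of time averages of a curve avoiding the open set $M_1^\delta$, so its projection $\mu_{z,\lambda_n}$ satisfies $\mu_{z,\lambda_n}(M_1^\delta)=0$. This uses lower semicontinuity of the mass of open sets under weak$^*$ convergence, and tightness from Lemma \ref{gameq}. Passing to the limit $n\to\infty$ in the same way gives $\mu_z(M_1^\delta)=0$.

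Next we use Lemma \ref{mu*M}: $\tilde\mu_z$ is a Mather measure, so its projection $\mu_z$ is supported in $\mathcal M$. Combined with the previous step, $\mu_z$ is supported in $\mathcal M\setminus M_1^\delta\subset\bigcup_{j\geq2}M_j$. By assumption (a), $a<0$ on the compact set $\bigcup_{j\ge2}M_j$, so $a\leqslant -\eta$ there for some $\eta>0$. Hence
\[
\int_{TM}a\,d\tilde\mu_z\leqslant-\eta<0 .
\]

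On the other hand, Lemma \ref{mu*>0} gives $\int_{TM}a\,d\tilde\mu_{z,\lambda_n}\geqslant0$ for every $n$. Since $a\circ\pi$ is bounded and continuous, weak$^*$ convergence of the probability measures gives $\int a\,d\tilde\mu_z=\lim_n\int a\,d\tilde\mu_{z,\lambda_n}\geqslant 0$. This contradicts the strict negativity above.

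The conclusion is that for $\lambda$ small the curve $\gamma^z_\lambda$ meets $M_1^\delta$. If $z\in M_1^\delta$, the curve is in $M_1^\delta$ at time $0$. Otherwise, by continuity, there is a first backward time $-t_{z,\lambda}$ at which $\gamma^z_\lambda$ reaches $\partial M_1^\delta$, i.e., enters $M_1^\delta$ in the sense of the definition.

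The main technical point is justifying that the mass of $M_1^\delta$ vanishes along the double limit (first $t\to\infty$, then $\lambda\to0$). We plan to handle it via a continuous cutoff $f\ge0$ supported in $M_1^\delta$ with $f=1$ on $M_1^{\delta/2}$. Then $\int f\,d\mu=0$ passes to both limits directly, giving $\mu_z(M_1^{\delta/2})=0$. This is enough, since the argument only needs $\mu_z(M_1)=0$.
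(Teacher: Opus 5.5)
Your proof is correct and follows essentially the same route as the paper: Lemmas \ref{mu*M} and \ref{mu*>0}, together with assumption (a), force every limiting measure $\mu_z$ to charge $M_1$, and this is incompatible with the curves avoiding $M_1^\delta$ along some sequence $\lambda_n\to0^+$. The paper's proof is a two-line sketch, and you spell out the contradiction-along-a-subsequence and the weak$^*$ support (Portmanteau or cutoff) step that it leaves implicit.
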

\begin{proof}
By Lemma \ref{mu*>0}, since we have assumed the condition (a), the limiting measure $\mu_z$ satisfies $\mu_z(M_1)>0$. Thus, for each $\delta>0$ and for each $z\in M$, there is $\lambda_0>0$ such that for all $\lambda\in(0,\lambda_0)$, the calibrated curve $\gamma^z_\lambda$ must enter $M^\delta_1$ at a certain time $-t_{z,\lambda}$.
\end{proof}

\begin{lem}\label{alp}
Given $z\in M$, there exist $\delta_0>0$ and $\lambda_0>0$ such that,
for all $\lambda\in(0,\lambda_0)$, once the calibrated curve
$\gamma^z_\lambda$ enters $M^{\delta_0}_1$, it never enters
$M^{\delta_0}_j$ for any $j\in\{2,\dots,k\}$.
\end{lem}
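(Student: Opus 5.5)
We argue by contradiction. Suppose that for every $\delta>0$ and $\lambda_0>0$ there is some $\lambda\in(0,\lambda_0)$ for which $\gamma^z_\lambda$ first enters $M^{\delta}_1$ and afterwards, going backward in time, enters some $M^{\delta}_j$ with $j\neq 1$. Then we can extract sequences $\delta_n\to0^+$ and $\lambda_n\to0^+$, together with an index $j\in\{2,\dots,k\}$ fixed after passing to a subsequence, such that $\gamma_n:=\gamma^z_{\lambda_n}$ has the following property: it is in $M_1^{\delta_n}$ at some time $-\tau_n$, leaves $M_1^{\delta_n}$, and later enters $M_j^{\delta_n}$ at a time $-\sigma_n<-\tau_n$. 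Recording the successive neighborhoods $M_i^{\delta_n}$ visited between $-\tau_n$ and $-\sigma_n$ gives a visiting order $M_1^{\delta_n},M_{i_2}^{\delta_n},\dots,M_{j}^{\delta_n}$. Since $k$ is finite, Corollary \ref{twice} lets us assume the sequence of indices has no repetition, so there are finitely many patterns, and we fix one along a subsequence.

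The central point is to produce a cycle so that Lemma \ref{xi1} applies. I expect this to be the main obstacle. We know $\gamma_n$ visits $M_1$, then $M_j$, in backward time. We want it to return to $M_1^{\delta_n}$ afterwards, or to have visited $M_j$ before. For the return, the natural tool is Lemma \ref{mu*>0}/Lemma \ref{eM1} applied along the backward tail. The averaged measure $\tilde\mu_{z,\lambda_n}$ has $\int a\,d\tilde\mu\geqslant0$, and any limit is a Mather measure by Lemma \ref{mu*M}. By assumption (a) it must charge $M_1$. Hence the tail $\gamma_n(-\infty,-\sigma_n]$ spends a positive proportion of time near $M_1$, and in particular it enters $M_1^{\delta_n}$ again, at least for $n$ large along the subsequence.

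There is a subtlety here: the positivity concerns the limit of time averages over $[-t,0]$. To transfer it to the tail after $-\sigma_n$, note that the finite segment $[-\sigma_n,0]$ contributes nothing to the averages as $t\to\infty$, so the measure $\tilde\mu_{z,\lambda_n}$ is also a limit of averages over the tail. Therefore $\gamma_n$ re-enters $M_1^{\delta}$ for any fixed $\delta$. To get this with the shrinking $\delta_n$, I would choose $\delta_n$ and $\lambda_n$ diagonally: first fix $\delta$, then take $\lambda$ small enough to apply Lemma \ref{eM1} to the tail starting from the point $\gamma_n(-\sigma_n)$. Since $\gamma_n$ restricted to $(-\infty,-\sigma_n]$ is itself a calibrated curve ending at $\gamma_n(-\sigma_n)$, we need uniformity of Lemma \ref{eM1} in the endpoint. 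I would obtain this by compactness in $z$, using the equi-Lipschitz bound of Lemma \ref{gameq}, if necessary rerunning the proof of Lemma \ref{mu*>0}, where $t_\lambda$ is independent of $z$.

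With the return established, $\gamma_n$ visits $M_1^{\delta_n},\dots,M_j^{\delta_n},\dots,M_1^{\delta_n}$ in order. After again reducing to a cycle without repetitions and fixing the pattern along a subsequence, Lemma \ref{xi1} shows that $M_1$ and $M_j$ are in the same static class. This contradicts $j\neq1$ and the disjointness of distinct static classes. Finally, we set $\delta_0,\lambda_0$ to be the values at which the contradiction hypothesis first fails, which proves the statement.
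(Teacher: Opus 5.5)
Your proposal is correct and follows the paper's argument. You argue by contradiction, use that the limiting Mather measure charges $M_1$ (Lemmas \ref{mu*M}, \ref{mu*>0} and assumption (a)) to force the backward tail to return to $M_1^{\delta}$ after it visits $M_j^{\delta}$, and then contradict Corollary \ref{twice} (equivalently Lemma \ref{xi1}). The endpoint-uniform version of Lemma \ref{eM1} you contemplate is not needed: take $\delta_n$ decreasing, fix $\delta_N$, and choose $n\geqslant N$ large; your tail-average observation then already gives the return to $M_1^{\delta_N}$, which is exactly the paper's diagonal step.
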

\begin{proof}
Suppose by contradiction that there exist sequences $\delta_n \to 0^+$ and $\lambda_n \to 0^+$, and an index $j \in \{2,\dots,k\}$, such that $\gamma_{n}:=\gamma^z_{\lambda_n}$ enters $M^{\delta_n}_j$ after passing through $M^{\delta_n}_1$. Let $\mu_z$ be a weak limit of $(\mu_{z,\lambda_n})_n$. For any point $x$ in the support of $\mu_z$, there exists a sequence $x_n\to x$  with $x_n \in {\rm supp}(\mu_{z,\lambda_n})$ for each $n$. Since $\mu_z(M_1)>0$, it follows that for any $\delta>0$, there exists $n_0$ such that for all $n \geqslant n_0$, the curve $\gamma_n$ visits $M^{\delta}_1$. Fix $\delta_n>0$. Choosing an index $m \geqslant n$ such that $\lambda_m$ is sufficiently small, we conclude that $\gamma_m$ must return to $M^{\delta_n}_1$ after passing through $M^{\delta_n}_j$. Letting $\lambda_m\to 0^+$ and $\delta_n\to 0^+$, we deduce that $\gamma_m$ visits $M^{\delta_n}_1$ twice, which contradicts Corollary \ref{twice}.
\end{proof}

Now fix $z\in M_1$. By Lemma \ref{alp}, there exist constants $\delta_0>0$ and $\lambda_0>0$ such that, for all $\lambda\in(0,\lambda_0)$, the calibrated curve $\gamma^z_{\lambda}$ does not enter $M^{\delta_0}_j$ for any $j\in\{2,\dots,k\}$. Since $a(x)$ is positive on $M_1$, there exists $\epsilon>0$ such that
\begin{equation}\label{VM1}
  a(x)>\epsilon,\quad \forall x\in M_1.
\end{equation}
\begin{lem}\label{e1e2}
Fix $z\in M_1$. There exist constants $T_0>0$ and $\lambda_0>0$ such that, for all $\lambda\in(0,\lambda_0)$,
\[\frac{1}{t}\int_{-t}^0 a(\gamma^z_\lambda(s))\, ds>\epsilon,\quad \forall t\geqslant T_0.\]
\end{lem}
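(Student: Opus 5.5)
The plan is to argue by contradiction and use the averaged-measure machinery of Lemma \ref{mu*M}, now with a double limit $t\to+\infty$, $\lambda\to0^+$. Two preliminary facts are needed. First, since $M_1$ is compact and $a$ is continuous, \eqref{VM1} gives $\min_{M_1}a>\epsilon$. Second, fix $\delta_0,\lambda_0$ as given after Lemma \ref{alp}, so that for $\lambda\in(0,\lambda_0)$ the curve $\gamma^z_\lambda$ never enters $M_j^{\delta_0}$ for $j\in\{2,\dots,k\}$.

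Suppose the statement fails. Then there are $\lambda_n\to0^+$ and $t_n\geqslant n$ with
\[\frac{1}{t_n}\int_{-t_n}^0 a(\gamma^z_{\lambda_n}(s))\,ds\leqslant\epsilon .\]
Define $\tilde\nu_n$ as the time average of $\gamma_n:=\gamma^z_{\lambda_n}$ over $[-t_n,0]$, exactly as in \eqref{muz}. By Lemma \ref{gameq} the curves are equi-Lipschitz, so the family $(\tilde\nu_n)_n$ is tight. Hence a subsequence converges weakly$^*$ to a probability measure $\tilde\nu$ supported in $\{\|v\|\leqslant C\}$.

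Next I show that $\tilde\nu$ is a Mather measure, repeating the two steps of Lemma \ref{mu*M}. For closedness, $\int df(\dot x)\,d\tilde\nu_n=\bigl(f(\gamma_n(0))-f(\gamma_n(-t_n))\bigr)/t_n\to0$. For minimality, the calibration identity gives
\[\int\bigl[L+c_0-\lambda_n a u_{\lambda_n}+A\lambda_n\bigr]\,d\tilde\nu_n=\frac{u_{\lambda_n}(\gamma_n(0))-u_{\lambda_n}(\gamma_n(-t_n))}{t_n}\to0 ,\]
using the uniform bound of Lemma \ref{s3}. The terms $\lambda_n(a u_{\lambda_n}-A)$ are $O(\lambda_n)$ uniformly. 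Because the velocities are uniformly bounded, $L$ is continuous and bounded on the relevant compact set, so $\int(L+c_0)\,d\tilde\nu=0$. Proposition \ref{Mather} then shows that $\tilde\nu$ is minimizing.

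I then locate the support of $\tilde\nu$. Since $\tilde\nu$ is a Mather measure, its projection $\nu$ is supported in $\mathcal M$. Each $\tilde\nu_n$ is carried by the image of $\gamma_n$, which lies in the closed set $M\setminus\bigcup_{j\geqslant2}M_j^{\delta_0}$. By the portmanteau theorem, $\nu$ is carried by this closed set as well. Therefore $\mathrm{supp}\,\nu\subset\mathcal M\setminus\bigcup_{j\geqslant 2}M_j^{\delta_0}\subset M_1$.

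Passing to the limit in the assumed inequality (the function $a$ is continuous and bounded) gives
\[\epsilon\geqslant\int_M a\,d\nu\geqslant\min_{M_1}a>\epsilon,\]
a contradiction. This yields both $T_0$ and the shrunken $\lambda_0$.

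The main obstacle is justifying that the double limit is minimizing. This rests on the uniform-in-$\lambda$ bounds from Lemmas \ref{s3} and \ref{gameq}, which give bounded velocities and hence plain weak$^*$ continuity of $\int L$. The support localization is the crucial use of Lemma \ref{alp}, and that is precisely why $z$ must be taken in $M_1$.
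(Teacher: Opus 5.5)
Your proposal is correct and follows the paper's proof essentially step for step. You argue by contradiction with $\lambda_n\to0^+$ and $t_n\to+\infty$, take a weak$^*$ limit of the time-averaged measures along $\gamma^z_{\lambda_n}$ on $[-t_n,0]$, show it is closed and minimizing, use Lemma~\ref{alp} to force its support into $M_1$, and contradict \eqref{VM1}. The portmanteau argument for the support and the use of uniformly bounded velocities to pass to the limit in $\int L$ just make explicit what the paper leaves implicit.
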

\begin{proof}
We argue by contradiction. Assume there is $\lambda_n\to 0^+$ and $t_n\to +\infty$ such that
\[\frac{1}{t_n}\int_{-t_n}^0 a(\gamma_n(s))\, ds\leqslant \epsilon,\]
where we denote $\gamma_n:=\gamma^{z}_{\lambda_n}$ as before. Here we recall the definition in \eqref{muz}. By Lemma \ref{gameq}, the family $(\tilde{\mu}^{z,\lambda_n}_{t_n})_n$ is tight. Hence, up to a subsequence, there exists a limit measure $\tilde{\mu}_*$ of this family. By the inequality above, we also have
\[\int_M a(x)\, d\tilde\mu_*\leqslant \epsilon,\]
In the following, we will show that $\tilde\mu_*$ is a Mather measure.

\medskip

{\bf The measure $\tilde\mu_*$ is closed.} Let $f\in C^1(M)$, we have
\begin{align*}
\int_{TM} df(\dot x)\, d\tilde{\mu}_*&=\lim_{n\to +\infty}\frac{1}{t_n}\int_{-t_n}^{0}\frac{df}{ds}(\gamma_n(s))\, ds
\\ &=\lim_{n\to +\infty}\frac{f(\gamma_n(0))-f(\gamma_n(-t_n))}{t_n}
\leqslant \lim_{n\to +\infty}\frac{2\|f\|_\infty}{t_n}=0.
\end{align*}

{\bf The measure $\tilde\mu_*$ is minimizing.} Since $u_\lambda$ is uniformly bounded, we have
\begin{align*}
&\int_{TM} [L(x,\dot x)+c_0]\, d\tilde\mu_*
\\ &=\lim_{n\to+\infty}\int_{TM} [L(x,\dot x)+c_0-\lambda_n a(x)u_{\lambda_n}(x)+A\lambda_n]\, d\tilde{\mu}_n
\\ &=\lim_{n\to+\infty}\frac{1}{t_n}\int_{-t_n}^{0}\bigg[L(\gamma_n(s),\dot{\gamma}_n(s))+c_0-\lambda_n a(\gamma_n(s))u_{\lambda_n}(\gamma_n(s))+A\lambda_n\bigg]\, ds
\\ &=\lim_{n\to+\infty}\frac{u_{\lambda_n}(\gamma_n(0))-u_{\lambda_n}(\gamma_n(-t_n))}{t_n}=0.
\end{align*}

\medskip

By Lemma \ref{alp}, there exists $\delta_0>0$ such that, for $\lambda_n$ sufficiently small, $\gamma_n$ does not enter $M^{\delta_0}_j$ for any $j\in\{2,\dots,k\}$. Consequently, the support of $\mu_*$ is contained in $M_1$, which contradicts \eqref{VM1}.
\end{proof}

\begin{lem}
Fix $z\in M_1$. There exist constants $T_0>0$ and $\lambda_0>0$ such that, for all $t\geqslant T_0$ and all $\lambda\in(0,\lambda_0)$ we have
\[e^{-\lambda\int_t^0a(\gamma^z_\lambda(s))\, ds}\to 0,\ \textrm{as}\ t\to-\infty,\]
and
\begin{equation}\label{<e<}
  \frac{e^{-\lambda \|a\|_\infty T_0}}{\lambda\|a\|_\infty}\leqslant \int_{-\infty}^0 e^{-\lambda \int_t^0a(\gamma^z_\lambda(s))\, ds}\, dt \leqslant \frac{1}{\lambda \epsilon}+\frac{e^{\lambda \|a\|_\infty T_0}-1}{\lambda\|a\|_\infty}.
\end{equation}
\end{lem}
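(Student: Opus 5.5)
The whole argument runs through Lemma \ref{e1e2}, taking the same constants $T_0$, $\lambda_0$ and $\epsilon$ from \eqref{VM1}. Fix $z\in M_1$, $\lambda\in(0,\lambda_0)$, and write
\[
\Phi(t):=\int_t^0 a(\gamma^z_\lambda(s))\, ds\quad\text{for } t\leqslant 0 .
\]
Lemma \ref{e1e2}, read with $t$ replaced by $-t$, says that $\Phi(t)>\epsilon|t|$ whenever $t\leqslant -T_0$.

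\textbf{Step 1: decay.} For $t\leqslant -T_0$ we get $e^{-\lambda\Phi(t)}\leqslant e^{-\lambda\epsilon|t|}$. This tends to $0$ as $t\to-\infty$, which is the first claim.

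\textbf{Step 2: upper bound in \eqref{<e<}.} Split $\int_{-\infty}^0$ into $\int_{-\infty}^{-T_0}$ and $\int_{-T_0}^0$.
\begin{itemize}
\item On $(-\infty,-T_0]$, Step 1 gives
\[
\int_{-\infty}^{-T_0}e^{-\lambda\Phi(t)}\,dt\leqslant\int_{-\infty}^{-T_0}e^{\lambda\epsilon t}\,dt=\frac{e^{-\lambda\epsilon T_0}}{\lambda\epsilon}\leqslant \frac{1}{\lambda\epsilon}.
\]
\item On $[-T_0,0]$, the trivial bound $|\Phi(t)|\leqslant\|a\|_\infty|t|$ gives
\[
\int_{-T_0}^0 e^{-\lambda\Phi(t)}\,dt\leqslant\int_{-T_0}^0e^{\lambda\|a\|_\infty|t|}\,dt=\frac{e^{\lambda\|a\|_\infty T_0}-1}{\lambda\|a\|_\infty}.
\]
\end{itemize}
Adding the two pieces gives the right-hand side of \eqref{<e<}.

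\textbf{Step 3: lower bound in \eqref{<e<}.} Use the same bound $e^{-\lambda\Phi(t)}\geqslant e^{-\lambda\|a\|_\infty|t|}$, valid for every $t$. The cleanest route is to integrate it only over a suitable window and use positivity of the integrand elsewhere.
\begin{itemize}
\item Integrating over $[-T_0,0]$ alone gives $\frac{1-e^{-\lambda\|a\|_\infty T_0}}{\lambda\|a\|_\infty}$, which does not obviously match the stated constant.
\item Integrating over the whole half-line gives $\int_{-\infty}^0 e^{-\lambda\|a\|_\infty|t|}\,dt=\frac{1}{\lambda\|a\|_\infty}$. This already dominates $\frac{e^{-\lambda\|a\|_\infty T_0}}{\lambda\|a\|_\infty}$.
\item Alternatively, integrate over $(-\infty,-T_0]$, where $\Phi(t)\leqslant\|a\|_\infty|t|$ still holds. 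This gives exactly $\frac{e^{-\lambda\|a\|_\infty T_0}}{\lambda\|a\|_\infty}$, which is presumably how the stated form arose.
\end{itemize}
Either of the last two choices proves the lower bound.

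\textbf{Main obstacle.} The only genuinely nontrivial input is the uniform-in-$\lambda$ linear growth $\Phi(t)>\epsilon|t|$ supplied by Lemma \ref{e1e2}. Given that lemma, the rest is elementary. The one point to watch is sign bookkeeping: $t<0$, so $\int_t^0$ runs over a positive-length interval and $|t|=-t$. Note also that the phrase ``for all $t\geqslant T_0$'' in the statement should be understood as $t\leqslant -T_0$ on the negative half-line.
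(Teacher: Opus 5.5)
Your proof is correct and follows the paper's argument: both use Lemma \ref{e1e2} to get $\epsilon|t|\leqslant\int_t^0 a(\gamma^z_\lambda(s))\,ds\leqslant\|a\|_\infty|t|$ for $t\leqslant -T_0$, split the integral at $-T_0$ for the upper bound, and get the lower bound by integrating $e^{-\lambda\|a\|_\infty|t|}$ over $(-\infty,-T_0]$ (the paper integrates over $[-T,-T_0]$ and lets $T\to+\infty$). Your alternative of integrating over the whole half-line, which gives $1/(\lambda\|a\|_\infty)$, is also valid and slightly sharper.
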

\begin{proof}
By Lemma \ref{e1e2}, we obtain
\[\epsilon t\leqslant \int_{-t}^0a(\gamma^z_\lambda(s))\, ds\leqslant \|a\|_\infty t.\]
Consequently,
\[e^{-\lambda\int_t^0a(\gamma^z_\lambda(s))\, ds}\leqslant e^{\lambda\epsilon t}\to 0,\ \textrm{as}\ t\to-\infty.\]
Moreover, 
\begin{align*}
\int_{-T}^0 e^{-\lambda \int_t^0a(\gamma^z_\lambda(s))\, ds}\, dt &=\int_{-T}^{-T_0} e^{-\lambda \int_t^0a(\gamma^z_\lambda(s))\, ds}\, dt+\int_{-T_0}^0 e^{-\lambda \int_t^0a(\gamma^z_\lambda(s))\, ds}\, dt
\\ &\leqslant \int_{-T}^{-T_0} e^{\lambda \epsilon t}\, dt+\int_{-T_0}^0 e^{-\lambda \|a\|_\infty t}\, dt
\\ &=\frac{e^{-\lambda\epsilon T_0}-e^{-\lambda\epsilon T}}{\lambda \epsilon}+\frac{e^{\lambda\|a\|_\infty T_0}-1}{\lambda\|a\|_\infty}\leqslant \frac{1}{\lambda \epsilon}+\frac{e^{\lambda\|a\|_\infty T_0}-1}{\lambda\|a\|_\infty}.
\end{align*}
On the other hand,
\begin{align*}
\int_{-T}^0 e^{-\lambda \int_t^0 a(\gamma^z_\lambda(s))\, ds}\, dt&\geqslant \int_{-T}^{-T_0} e^{-\lambda \int_t^0 a(\gamma^z_\lambda(s))\, ds}\, dt
\\ &\geqslant \int_{-T}^{-T_0} e^{\lambda \|a\|_\infty t}\, dt=\frac{e^{-\lambda \|a\|_\infty T_0}-e^{-\lambda \|a\|_\infty T}}{\lambda\|a\|_\infty}.
\end{align*}
Letting $T \to +\infty$ yields the desired conclusion.
\end{proof}

According to \eqref{<e<}, for $z\in M_1$ and $\lambda$ sufficiently small, the following probability measure $\tilde{\mu}_\lambda^z$ is well-defined
\[\int_{TM}f(x,\dot x)\, d \tilde{\mu}_\lambda^z:=\frac{\int_{-\infty}^0 e^{-\lambda\int_t^0a(\gamma^z_\lambda(s))\, ds}f(\gamma^z_\lambda(t), \dot{\gamma}^z_\lambda(t))\, dt}{\int_{-\infty}^0 e^{-\lambda\int_t^0a(\gamma^z_\lambda(s))\, ds}\, dt}.\]

\begin{lem}\label{suponM1}
Let $\tilde{\mu}^z$ be a limit measure of $\tilde{\mu}_\lambda^z$ as $\lambda \to 0^+$. Then $\tilde{\mu}^z$ is a Mather measure. Moreover, its projected measure $\mu^z$ is supported on $M_1$.
\end{lem}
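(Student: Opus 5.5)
We need to show three things about the limit measure $\tilde\mu^z$: it is closed, it minimizes the action, and its projection is carried by $M_1$. Set $\rho_\lambda(t):=e^{-\lambda\int_t^0 a(\gamma^z_\lambda(s))\,ds}$ and $Z_\lambda:=\int_{-\infty}^0\rho_\lambda\,dt$. By \eqref{<e<}, $\lambda Z_\lambda$ is bounded above and below by positive constants independent of $\lambda$ for $\lambda$ small. Tightness of $\{\tilde\mu^z_\lambda\}$ follows from Lemma \ref{gameq}, since the speeds of $\gamma^z_\lambda$ are uniformly bounded. So limits along $\lambda_n\to0^+$ exist and are probability measures supported in a fixed compact subset of $TM$.

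\textbf{Closedness.} For $f\in C^1(M)$ we integrate by parts:
\[\int_{-\infty}^0\rho_\lambda(t)\,\frac{d}{dt}f(\gamma^z_\lambda(t))\,dt=f(z)-\int_{-\infty}^0 \lambda a(\gamma^z_\lambda(t))\rho_\lambda(t) f(\gamma^z_\lambda(t))\,dt.\]
Here we use $\rho_\lambda(0)=1$, $\rho_\lambda(t)\to0$ as $t\to-\infty$ (previous lemma), and $\dot\rho_\lambda=\lambda a(\gamma^z_\lambda)\rho_\lambda$. Dividing by $Z_\lambda$, the right-hand side is bounded by $\|f\|_\infty/Z_\lambda+\lambda\|a\|_\infty\|f\|_\infty=O(\lambda)$. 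Hence $\int df(\dot x)\,d\tilde\mu^z=0$.

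\textbf{Minimization.} The calibration identity gives $\frac{d}{dt}u_\lambda(\gamma^z_\lambda(t))=L+c_0-\lambda a u_\lambda+A\lambda$ along the curve. We integrate this against $\rho_\lambda$ and integrate by parts in the same way:
\[\int_{-\infty}^0\rho_\lambda\,[L+c_0-\lambda a u_\lambda+A\lambda]\,dt=u_\lambda(z)-\int_{-\infty}^0\lambda a\rho_\lambda u_\lambda\,dt.\]
We divide by $Z_\lambda$ and use the uniform bound of Lemma \ref{s3} together with $1/Z_\lambda=O(\lambda)$. Both sides are then $O(\lambda)$, so
\[\int_{TM}(L+c_0)\,d\tilde\mu^z_\lambda=O(\lambda).\]
By lower semicontinuity under weak$^*$ convergence (the measures live on a fixed compact set, where $L$ is continuous), $\int(L+c_0)\,d\tilde\mu^z\le 0$. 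Proposition \ref{Mather} gives the reverse inequality, so $\tilde\mu^z$ is a Mather measure.

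\textbf{Support in $M_1$.} This is the main obstacle. The weighted measure puts mass on the whole backward orbit, including an initial segment that may lie away from $\mathcal M$. The weight spreads over a time scale of order $1/\lambda$, however, so any fixed finite time window has mass $O(\lambda)$.

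First, since $\tilde\mu^z$ is a Mather measure, its projection is supported in $\mathcal M=\bigcup_i M_i$. It therefore suffices to show $\mu^z(M_j^{\delta_0})=0$ for $j\ge2$. Because $z\in M_1$, the curve $\gamma^z_\lambda$ is already in $M_1^{\delta_0}$ at time $0$. By Lemma \ref{alp}, for $\lambda<\lambda_0$ it never enters $M_j^{\delta_0}$ with $j\ge2$. Thus $\mu^z_\lambda(M_j^{\delta_0})=0$ for all small $\lambda$.

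Since $M_j^{\delta_0}$ is open, the portmanteau inequality gives
\[\mu^z(M_j^{\delta_0})\le\liminf_{\lambda\to0^+}\mu^z_\lambda(M_j^{\delta_0})=0.\]
Hence $\mu^z$ is supported in $\mathcal M\setminus\bigcup_{j\ge2}M_j=M_1$.
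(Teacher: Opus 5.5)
Your proposal is correct and follows essentially the same route as the paper: closedness and minimality come from integrating by parts against the weight $e^{-\lambda\int_t^0 a(\gamma^z_\lambda(s))\,ds}$ using the bounds \eqref{<e<}, and the support statement comes from Lemma \ref{alp}. You also make explicit two steps the paper leaves implicit, namely the exact cancellation of the $\lambda a u_\lambda$ terms in the minimization estimate, and the portmanteau argument (together with $\mathrm{supp}\,\mu^z\subset\mathcal M$) that turns ``$\gamma^z_\lambda$ avoids $M_j^{\delta_0}$'' into $\mu^z(M_j)=0$.
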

\begin{proof}
{\bf The measure $\tilde{\mu}^z$ is closed.} For $f\in C^1(M)$, we have
\begin{align*}
\int_{TM}df(v)\, d\tilde{\mu}^z_\lambda&=\frac{\int_{-\infty}^0 \frac{d}{dt}(f(\gamma^z_\lambda(t)))e^{-\lambda \int_t^0 a(\gamma^z_\lambda(s))\, ds}\, dt}{\int_{-\infty}^0 e^{-\lambda \int_t^0 a(\gamma^z_\lambda(s))\, ds}\, dt}
\\ &=\frac{f(x)-\int_{-\infty}^0 f(\gamma^z_\lambda(t))\frac{d}{dt}e^{-\lambda \int_t^0 a(\gamma^z_\lambda(s))\, ds}\, dt}{\int_{-\infty}^0 e^{-\lambda \int_t^0 a(\gamma^z_\lambda(s))\, ds}\, dt},
\end{align*}
where we use \eqref{<e<} to get
\begin{align*}
\bigg|\int_{-\infty}^0 f(\gamma^z_\lambda(t))\frac{d}{dt}e^{-\lambda \int_t^0 a(\gamma^z_\lambda(s))\, ds}\, dt\bigg|&=\lambda\bigg|\int_{-\infty}^0 f(\gamma^z_\lambda(t))a(\gamma^z_\lambda(t))e^{-\lambda \int_t^0 a(\gamma^z_\lambda(s))\, ds}\, dt\bigg|
\\ &\leqslant \lambda\|f\|_\infty\|a\|_\infty \int_{-\infty}^0 e^{-\lambda \int_t^0 a(\gamma^z_\lambda(s))\, ds}\, dt
\\ &\leqslant \bigg(\frac{\|a\|_\infty}{\epsilon}+e^{\lambda \|a\|_\infty T_0}-1\bigg)\|f\|_\infty,
\end{align*}
Thus,
\begin{align*}
\int_{TM}df(v)\, d\tilde{\mu}^z_\lambda&\leqslant \lambda\|a\|_\infty e^{\lambda \|a\|_\infty T_0}\bigg(\frac{\|a\|_\infty}{\epsilon}+e^{\lambda \|a\|_\infty T_0}\bigg)\|f\|_\infty\to 0,
\end{align*}
as $\lambda \to 0^+$.

{\bf The measure $\tilde{\mu}^z$ is minimizing.} Since $\gamma^z_\lambda$ is a $u_\lambda$-calibrated curve, we have
\[\frac{d}{dt}u_\lambda(\gamma^z_\lambda(t))=L(\gamma^z_\lambda(t),\dot{\gamma}^z_\lambda(t))+c_0-\lambda a(\gamma^z_\lambda(t))u_\lambda(\gamma^z_\lambda(t))+A\lambda,\quad a.e.\ t<0.\]
It follows that
\begin{equation}\label{0}
\begin{aligned}
&\int_{TM}L(x,v)\, d\tilde{\mu}^z_\lambda(x,v)=\frac{\int_{-\infty}^0 L(\gamma^z_\lambda(t),\dot{\gamma}^z_\lambda(t))e^{-\lambda \int_t^0 a(\gamma^z_\lambda(s))\, ds}\, dt}{\int_{-\infty}^0 e^{-\lambda \int_t^0 a(\gamma^z_\lambda(s))\, ds}\, dt}
\\ &=\frac{\int_{-\infty}^0 \bigg(\frac{d}{dt}u_\lambda(\gamma^z_\lambda(t))+\lambda a(\gamma^z_\lambda(t))u_\lambda(\gamma^z_\lambda(t))-A\lambda-c_0\bigg)e^{-\lambda \int_t^0 a(\gamma^z_\lambda(s))\, ds}\, dt}{\int_{-\infty}^0 e^{-\lambda \int_t^0 a(\gamma^z_\lambda(s))\, ds}\, dt}
\\ &\leqslant \frac{\int_{-\infty}^0 \frac{d}{dt}(u_\lambda(\gamma^z_\lambda(t)))e^{-\lambda \int_t^0 a(\gamma^z_\lambda(s))\, ds}\, dt}{\int_{-\infty}^0 e^{-\lambda \int_t^0 a(\gamma^z_\lambda(s))\, ds}\, dt}+\lambda (\|a\|_\infty\|u_\lambda\|_\infty+A)-c_0.
\end{aligned}
\end{equation}
Similar to the proof of the closed property, we obtain
\begin{align*}
\frac{\int_{-\infty}^0 \frac{d}{dt}(u_\lambda(\gamma^z_\lambda(t)))e^{-\lambda \int_t^0 V(\gamma^z_\lambda(s))\, ds}\, dt}{\int_{-\infty}^0 e^{-\lambda \int_t^0 V(\gamma^z_\lambda(s))\, ds}\, dt}\leqslant \lambda \|a\|_\infty e^{\lambda \|a\|_\infty T_0}\bigg(\frac{\|a\|_\infty}{\epsilon}+e^{\lambda \|a\|_\infty T_0}\bigg)\|u_\lambda\|_\infty\to 0,
\end{align*}
as $\lambda \to 0^+$. Letting $\lambda\to 0^+$ in \eqref{0}, we conclude that $\tilde{\mu}^z$ is minimizing.

\medskip

By Lemma \ref{alp}, there exists $\delta_0>0$ such that, for $\lambda$ sufficiently small, $\gamma^z_\lambda$ does not enter $M^{\delta_0}_j$ for any $j\in\{2,\dots,k\}$. Therefore, $\tilde{\mu}^z$ is supported on $M_1$.
\end{proof}

\begin{lem}\label{geqA}
Let $w$ be a subsolution of \eqref{E0}. We have
\[u_\lambda(z)\geqslant w(z)-\frac{\int_{TM}a(x)w(x)\, d\tilde{\mu}^z_\lambda-A}{\int_{TM}a(x)\, d\tilde{\mu}^z_\lambda},\quad z\in M_1.\]
\end{lem}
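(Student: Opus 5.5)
We compare $u_\lambda$ and $w$ along the calibrated curve $\gamma:=\gamma^z_\lambda$, $z\in M_1$, using the exponential weight $E(t):=e^{-\lambda\int_t^0 a(\gamma(s))\,ds}$ that defines $\tilde\mu^z_\lambda$. By \eqref{<e<}, this weight is integrable on $(-\infty,0]$ and $E(t)\to0$ as $t\to-\infty$ (for $\lambda$ small, which is the regime where $\tilde\mu^z_\lambda$ is defined).

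First, since $\gamma$ is $(u_\lambda,\bar L,0)$-calibrated, we have for a.e.\ $t<0$
\[\frac{d}{dt}u_\lambda(\gamma(t))=L(\gamma,\dot\gamma)+c_0-\lambda a(\gamma)u_\lambda(\gamma)+A\lambda .\]
Since $w$ is a subsolution of \eqref{E0}, Proposition \ref{soleq} gives $w\prec L+c_0$. As $w$ is Lipschitz and $\gamma$ is Lipschitz (Lemma \ref{gameq}), $t\mapsto w(\gamma(t))$ is absolutely continuous, and
\[\frac{d}{dt}w(\gamma(t))\leqslant L(\gamma,\dot\gamma)+c_0\quad\text{a.e.}\]
Setting $\phi(t):=u_\lambda(\gamma(t))-w(\gamma(t))$, we obtain
\[\dot\phi(t)\geqslant -\lambda a(\gamma(t))u_\lambda(\gamma(t))+A\lambda=-\lambda a(\gamma(t))\phi(t)-\lambda a(\gamma(t))w(\gamma(t))+A\lambda .\]
Since $\dot E(t)=\lambda a(\gamma(t))E(t)$, it follows that
\[\frac{d}{dt}\big(E\phi\big)\geqslant \lambda E(t)\big(A-a(\gamma(t))w(\gamma(t))\big).\]

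Next, we integrate this inequality over $[-T,0]$ and let $T\to+\infty$. The term $E(-T)\phi(-T)$ vanishes because $\phi$ is bounded (Lemma \ref{s3}) and $E(-T)\to0$. This yields
\[u_\lambda(z)-w(z)\geqslant \lambda\int_{-\infty}^0 E(t)\big(A-a(\gamma(t))w(\gamma(t))\big)\,dt .\]
Dividing by $D:=\int_{-\infty}^0E\,dt$ gives
\[u_\lambda(z)-w(z)\geqslant \lambda D\Big(A-\int a w\,d\tilde\mu^z_\lambda\Big).\]

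Finally, we identify $\lambda D$. Integrating $\dot E=\lambda aE$ over $(-\infty,0]$ gives $1=E(0)-\lim_{T\to\infty}E(-T)=\lambda\int_{-\infty}^0 aE\,dt$. Hence $\lambda D\int a\,d\tilde\mu^z_\lambda=1$, so $\int a\,d\tilde\mu^z_\lambda>0$ and
\[\lambda D=\frac{1}{\int a\,d\tilde\mu^z_\lambda}.\]
Substituting this into the previous inequality gives exactly the stated bound.

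The main point needing care is the justification of the a.e.\ differential inequalities and the limit $T\to\infty$. The inequality for $w$ uses that $w\prec L+c_0$ applied on small intervals, differentiated at Lebesgue points. The decay $E(-T)\to0$ is supplied by \eqref{<e<}, and this is precisely what forces the restriction to $z\in M_1$ and small $\lambda$.
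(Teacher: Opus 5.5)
Your proof is correct and follows essentially the same route as the paper: multiply the calibration identity by the integrating factor $E(t)=e^{-\lambda\int_t^0 a(\gamma^z_\lambda(s))\,ds}$, integrate over $[-T,0]$, let $T\to+\infty$ using \eqref{<e<}, and identify $\lambda\int_{-\infty}^0E\,dt=1/\int_{TM}a\,d\tilde\mu^z_\lambda$ from $\dot E=\lambda a(\gamma^z_\lambda)E$. The only difference is technical: you obtain $\frac{d}{dt}w(\gamma^z_\lambda(t))\leqslant L+c_0$ a.e. directly from $w\prec L+c_0$ at Lebesgue points, whereas the paper smooths $w$ via Proposition \ref{ue}, uses the Fenchel inequality, and then sends $\varepsilon\to0$, a limit your version avoids.
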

\begin{proof}
According to Proposition \ref{ue}, for each $\varepsilon>0$, there exists $w_\varepsilon\in C^\infty(M)$ satisying $\|w_\varepsilon-w\|_\infty\leqslant \varepsilon$ and
\[H(x,Dw_\varepsilon(x))\leqslant c_0+\varepsilon,\quad \forall x\in M.\]
It follows that
\[L(x,v)+c_0\geqslant L(x,v)+H(x,Dw_\varepsilon(x))-\varepsilon\geqslant \langle v,Dw_\varepsilon(x)\rangle-\varepsilon,\quad \forall (x,v)\in TM.\]
Hence,
\begin{align*}
\frac{d}{dt}u_\lambda(\gamma^z_\lambda(t))&=L(\gamma^z_\lambda(t),\dot{\gamma}^z_\lambda(t))+c_0-\lambda a(\gamma^z_\lambda(t))u_\lambda(\gamma^z_\lambda(t))+A\lambda
\\ &\geqslant \langle \dot{\gamma}^z_\lambda(t),Dw_\varepsilon(\gamma^z_\lambda(t))\rangle-\varepsilon-\lambda a(\gamma^z_\lambda(t))u_\lambda(\gamma^z_\lambda(t))+A\lambda
\\ &=\frac{d}{dt}w_\varepsilon(\gamma^z_\lambda(t))-\varepsilon-\lambda a(\gamma^z_\lambda(t))u_\lambda(\gamma^z_\lambda(t))+A\lambda,\quad a.e.\ t<0.
\end{align*}
Multiplying both sides by $e^{-\lambda\int_t^0 a(\gamma^z_\lambda(s))\, ds}$, we get
\[\frac{d}{dt}\bigg(u_\lambda(\gamma^z_\lambda(t))e^{-\lambda\int_t^0 a(\gamma^z_\lambda(s))\, ds}\bigg)\geqslant \bigg(\frac{d}{dt}w_\varepsilon(\gamma^z_\lambda(t))-\varepsilon+A\lambda\bigg)e^{-\lambda\int_t^0 a(\gamma^z_\lambda(s))\, ds}.\]
Integrating over the inteval $(-T,0]$, where $T\geqslant T_0$, and using an integration by parts, we obtain
\begin{align*}
&u_\lambda(z)-u_\lambda(\gamma^z_\lambda(-T))e^{-\lambda\int_{-T}^0 a(\gamma^z_\lambda(s))\, ds}
\\ &\geqslant w_\varepsilon(z)-w_\varepsilon(\gamma^z_\lambda(-T))e^{-\lambda\int_{-T}^0 a(\gamma^z_\lambda(s))\, ds}
\\ &\quad -\int_{-T}^0w_\varepsilon(\gamma^z_\lambda(t))\frac{d}{dt}\bigg(e^{-\lambda\int_t^0 a(\gamma^z_\lambda(s))\, ds}\bigg)\, dt
+(A\lambda-\varepsilon) \int_{-T}^0 e^{-\lambda\int_t^0 a(\gamma^z_\lambda(s))\, ds}\, dt
\end{align*}
Letting $T\to +\infty$ and $\varepsilon\to 0$ yields
\[u_\lambda(z)\geqslant w(z)-\int_{-\infty}^0w(\gamma^z_\lambda(t))\frac{d}{dt}\bigg(e^{-\lambda\int_t^0a(\gamma^z_\lambda(s))\, ds}\bigg)\, dt+A\lambda \int_{-\infty}^0 e^{-\lambda\int_t^0a(\gamma^z_\lambda(s))\, ds}\, dt.\]
By the definition of $\tilde\mu^z_\lambda$, we have
\[\int_{-\infty}^0w(\gamma^z_\lambda(t))\frac{d}{dt}\bigg(e^{-\lambda\int_t^0a(\gamma^z_\lambda(s))\, ds}\bigg)\, dt=\lambda\int_{-\infty}^0 e^{-\lambda \int_t^0 a(\gamma^z_\lambda(s))\, ds}\, dt\int_{TM}a(x)w(x)\, d\tilde{\mu}^z_\lambda.\]
A direct calculation yields
\begin{align*}
\lambda \int_{-\infty}^0 e^{-\lambda \int_t^0 a(\gamma^z_\lambda(s))\, ds}\, dt&=\frac{\lambda \int_{-\infty}^0 e^{-\lambda \int_t^0 a(\gamma^z_\lambda(s))\, ds}\, dt}{\int_{-\infty}^0\frac{d}{dt}\bigg(e^{-\lambda\int_t^0a(\gamma^z_\lambda(s))\, ds}\bigg)\, dt}
\\ &=\frac{\int_{-\infty}^0 e^{-\lambda \int_t^0 a(\gamma^z_\lambda(s))\, ds}\, dt}{\int_{-\infty}^0 a(\gamma^z_\lambda(t))e^{-\lambda\int_t^0a(\gamma^z_\lambda(s))\, ds}\, dt}=\frac{1}{\int_{TM}a(x)\, d\tilde{\mu}^z_\lambda}.
\end{align*}
The proof is now complete.
\end{proof}

We define $\mathcal S$ to be the set of all subsolutions of \eqref{E0} satisfying
\[\int_M a(x)w(x)\, d\mu\leqslant A\]
for all projected Mather measures $\mu$ supported on $M_1$. We then define
\[u_0(z):=\sup_{w\in\mathcal S}w(z),\quad z\in M.\]
By Lemma \ref{leqA}, the set $\mathcal S$ is non-empty, which implies that $u_0$ has a lower bound. We also need to show that $u_0$ is bounded from above. Let $w \in \mathcal S$ and assume, by contradiction, that for all $z\in M_1$ we have $w(z)\geqslant A/\epsilon$. Since $a(z)>\epsilon$ on $M_1$, we get
\[\int_{M_1} a(x)w(x)\, d\mu>A,\]
which contradicts
\[\int_{M_1} a(x)w(x)\, d\mu=\int_M a(x)w(x)\, d\mu\leqslant A.\]
Thus, there is a point $z\in M_1$ such that $w(z)\leqslant A/\epsilon$. Since all subsolutions of \eqref{E0} are equi-Lipschitz continuous, it follows that $w$ is uniformly bounded from above on $M$. Consequently, $u_0$ is bounded from above. By Proposition \ref{supsub}, the pointwise supremum of a family of subsolutions is a subsolution, $u_0$ is a subsolution of \eqref{E0}.

\begin{lem}\label{cononM1}
The maximal solution $u_\lambda$ uniformly converges to $u_0$ as $\lambda\to 0^+$ on $M_1$.
\end{lem}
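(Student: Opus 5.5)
Since $\{u_\lambda\}$ is relatively compact (Lemma \ref{s3}), it suffices to show that every limit point $u_*$ of $(u_{\lambda_n})_n$, $\lambda_n\to0^+$, coincides with $u_0$ on $M_1$. We follow the two-inequality scheme of \cite{DNYZ}: first $u_*\leqslant u_0$ on $M$, then $u_*\geqslant u_0$ on $M_1$.

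\textbf{Upper bound.} By Proposition \ref{stability}, $u_*$ is a solution, hence a subsolution, of \eqref{E0}. By Lemma \ref{leqA},
\[
\int_{TM}a\,u_*\,d\tilde\mu\leqslant A
\]
for every Mather measure, in particular for those whose projections are supported on $M_1$. Therefore $u_*\in\mathcal S$, and so $u_*\leqslant u_0$ on $M$.

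\textbf{Lower bound.} Fix $z\in M_1$ and let $w\in\mathcal S$ be arbitrary. Lemma \ref{geqA} gives
\[
u_{\lambda_n}(z)\geqslant w(z)-\frac{\int a w\,d\tilde\mu^z_{\lambda_n}-A}{\int a\,d\tilde\mu^z_{\lambda_n}}.
\]
Pass to a further subsequence so that $\tilde\mu^z_{\lambda_n}\rightharpoonup\tilde\mu^z$. By Lemma \ref{suponM1}, $\tilde\mu^z$ is a Mather measure whose projection $\mu^z$ is supported on $M_1$. Since $a>\epsilon$ on $M_1$ by \eqref{VM1}, we get $\int a\,d\mu^z\geqslant\epsilon>0$. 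Because $a$ and $w$ are continuous and the measures are tight, the two integrals $\int a w\,d\tilde\mu^z_{\lambda_n}$ and $\int a\,d\tilde\mu^z_{\lambda_n}$ converge to $\int a w\,d\mu^z$ and $\int a\,d\mu^z$. Since $w\in\mathcal S$, we have $\int aw\,d\mu^z-A\leqslant0$. Dividing by the positive quantity $\int a\,d\mu^z$ gives a nonpositive fraction, so the limiting inequality is
\[
u_*(z)\geqslant w(z)-\frac{\int aw\,d\mu^z-A}{\int a\,d\mu^z}\geqslant w(z).
\]
Taking the supremum over $w\in\mathcal S$ yields $u_*(z)\geqslant u_0(z)$.

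\textbf{Main obstacle.} The delicate point is that the subsequence along which $\tilde\mu^z_{\lambda_n}$ converges depends on $z$. To handle this, we argue pointwise: for a fixed $z$, every subsequence of $u_{\lambda_n}(z)$ has a further subsequence along which the measures converge, and along it the argument above gives $u_*(z)\geqslant u_0(z)$. Since $u_*(z)$ is the full-sequence limit, this suffices. One must also check that the denominators $\int a\,d\tilde\mu^z_{\lambda_n}$ stay bounded away from $0$ so that passing to the limit is legitimate. This follows from Lemma \ref{e1e2}: the weight in $\tilde\mu^z_\lambda$ concentrates on times $t\leqslant -T_0$, where the time averages of $a$ exceed $\epsilon$. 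Alternatively, it follows from weak convergence together with $\int a\,d\mu^z\geqslant\epsilon$.

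\textbf{Conclusion.} Combining the two inequalities gives $u_*=u_0$ on $M_1$ for every limit point $u_*$. Hence $u_\lambda\to u_0$ on $M_1$, and the convergence is uniform on $M_1$ by the equi-Lipschitz bound of Lemma \ref{s3} together with compactness of $M_1$.
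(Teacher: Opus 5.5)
Your proposal is correct and follows essentially the same route as the paper. The upper bound $u_*\leqslant u_0$ holds because Lemma~\ref{leqA} places $u_*$ in $\mathcal S$, and the lower bound $u_*\geqslant u_0$ on $M_1$ comes from passing to the limit in Lemma~\ref{geqA}, using Lemma~\ref{suponM1} and $a>\epsilon$ on $M_1$ to make the correction term nonpositive. You also spell out details the paper leaves implicit: the $z$-dependent subsequence extraction and the positivity of the limiting denominator.
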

\begin{proof}
Let $u_*$ be a limit of the family $\{u_\lambda\}$. By Lemma \ref{leqA}, we have $u_*\leqslant u_0$. By Lemmas \ref{suponM1} and \ref{geqA}, for each $w\in\mathcal S$ and $z\in M_1$, we have $u_*(z)\geqslant w(z)$. Therefore, we have $u_*(z)=u_0(z)$ for all $z\in M_1$.
\end{proof}

\begin{lem}\label{cononM}
The maximal solution $u_\lambda$ uniformly converges to $u_0$ as $\lambda\to 0^+$ on $M$.
\end{lem}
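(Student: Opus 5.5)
The idea is to show that every limit point $u_*$ of $\{u_\lambda\}$ coincides with one fixed critical solution. Let $\lambda_n\to0^+$ be any sequence with $u_{\lambda_n}\to u_*$ uniformly; such subsequential limits exist by Lemma \ref{s3} and the Arzel\`a--Ascoli theorem. By Proposition \ref{stability}, $u_*$ is a solution of \eqref{E0}, and by Lemma \ref{cononM1} it satisfies $u_*=u_0$ on $M_1$. Proposition \ref{uniqueM} says that two solutions agreeing on $\mathcal M$ agree on $M$. So it is enough to show that the values of $u_*$ on $\mathcal M$ are determined by its values on $M_1$ alone. Once every limit point coincides with a single solution, compactness gives uniform convergence of the whole family.

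The natural candidate for that solution is
\[
\bar u(x):=\min_{y\in M_1}\big(u_0(y)+h^\infty(y,x)\big).
\]
This is a solution of \eqref{E0}, being a minimum over a compact set of solutions $h^\infty(y,\cdot)$ plus constants. The steps are as follows.

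\textbf{Step 1: $u_*\leqslant \bar u$.} By Proposition \ref{h>w}(ii), since $u_*$ is a subsolution,
\[
u_*(x)\leqslant u_*(y)+h^\infty(y,x)=u_0(y)+h^\infty(y,x)\quad\text{for } y\in M_1,
\]
and taking the minimum over $y$ gives the claim.

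\textbf{Step 2: reverse inequality on $\mathcal M$.} Fix $z\in\mathcal M$, say $z\in M_j$. Take the calibrated curves $\gamma_n:=\gamma^z_{\lambda_n}$. By Lemma \ref{eM1}, for each small $\delta$ and large $n$, $\gamma_n$ enters $M_1^\delta$ at some time $-t_n$, with $\gamma_n(-t_n)\to y\in M_1$ up to a subsequence. Calibration gives
\[
u_{\lambda_n}(z)-u_{\lambda_n}(\gamma_n(-t_n))=\int_{-t_n}^0\big[L+c_0-\lambda_n a\,u_{\lambda_n}+A\lambda_n\big]\,ds.
\]
Two cases arise.

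If $t_n$ stays bounded along a subsequence, I pass to the limit as in Lemma \ref{alp0}. The lower semicontinuity of Lemma \ref{TM} then shows that the right-hand side dominates, in the limit, the action of a limit curve from $y$ to $z$, which is at least $h_t(y,z)$.

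If $t_n\to\infty$, I use Proposition \ref{htlip} and the definition of $h^\infty$, as in Lemma \ref{xi1}, to bound the integral below by $h^\infty(y,z)$ up to $o(1)$.

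In both cases I need $\lambda_n\int_{-t_n}^0 |a\,u_{\lambda_n}-A|\,ds\to0$. Then
\[
u_*(z)\geqslant u_0(y)+h^\infty(y,z)\geqslant \bar u(z),
\]
where in the bounded case I also use $h_t\geqslant h^\infty$ along a calibrated curve of a critical solution, or work directly with $u_*$-calibration.

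\textbf{Main obstacle.} The hard point is controlling the discount term $\lambda_n\int_{-t_n}^0(\cdot)\,ds$ when $t_n\to\infty$. This is exactly scenario B, where the curve may linger near $M_j$. My first attempt would avoid bounding $t_n$ and instead use an exponential integrating factor, as in Lemma \ref{geqA}. With this, I would integrate the calibration identity from the entrance time $-t_n$ into $M_1^\delta$ up to $0$. The weight $e^{-\lambda\int a}$ remains bounded, since $a<0$ near $M_j$ only increases the factor while the curve lingers, and the time near $M_j$ is visited only once by Corollary \ref{twice}. This should produce $u_*(z)\geqslant u_*(\xi)+(\text{action})$ in the limit.

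If that fails, the fallback is Lemma \ref{tinf} combined with the bounded transit times between neighbourhoods established in the (unnamed) lemma preceding Lemma \ref{alp0}. This would let me replace the long stay near $M_j$ by a $u_*$-calibrated curve. That curve connects $z$ to a point of $M_j$, and then, through a transit of bounded time, to $M_1$. Along it $u_*$ differences equal Peierls barriers, by Lemma \ref{h=w} inside $M_j$. Combining Steps 1 and 2 gives $u_*=\bar u$ on $\mathcal M$, hence on $M$ by Proposition \ref{uniqueM}. Finally, Lemma \ref{cononM1} shows that $\bar u=u_0$ on $M_1$, consistent with the identification of the limit as $u_0$.
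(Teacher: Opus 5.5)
Your Step 1, and the reduction to proving $u_*\geqslant\bar u$ on $\mathcal M$, are fine. Step 2, however, has a genuine gap exactly where you locate the main obstacle, and neither of your remedies closes it.

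The condition $\lambda_n\int_{-t_n}^0|a\,u_{\lambda_n}-A|\,ds\to0$ is not available. The interval $(-t_n,0]$ contains the entire stay of $\gamma_n$ near $M_j$, and near any intermediate classes, before it reaches $M_1^\delta$. Where $a<0$ we have $|a u_{\lambda}-A|\geqslant A-\|a\|_\infty\|v_0\|_\infty>0$, because $u_\lambda\geqslant v_0$. So your condition forces $\lambda_n$ times the length of that stay to tend to $0$, and nothing proved so far guarantees this. Corollary \ref{twice} says a neighbourhood is visited once, not that the visit is short.

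The integrating factor does not help. With $E_n(t)=e^{-\lambda_n\int_t^0 a(\gamma_n(s))\,ds}$ the calibration identity becomes
\[
u_{\lambda_n}(z)-E_n(-t_n)\,u_{\lambda_n}(\gamma_n(-t_n))=\int_{-t_n}^0E_n(t)\,(L(\gamma_n(t),\dot\gamma_n(t))+c_0+A\lambda_n)\,dt .
\]
While $\gamma_n$ lingers where $a<0$, the weight grows like $e^{c\lambda_n|t|}$. So the sign $a<0$ is what could make it blow up, not a reason for it to stay bounded. Even a bounded weight is useless unless it tends to $1$. It multiplies $L+c_0$, which has no sign, and it multiplies the endpoint term, so the identity cannot be compared with $u_*(y)+h^\infty(y,z)$. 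Lemma \ref{geqA} can use this weight only because, for $z\in M_1$, it decays.

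The missing idea, which your fallback gestures at but does not carry out, is never to integrate the discount term over the time spent near a static class.

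\begin{enumerate}
\item By Corollary \ref{twice}, fix the itinerary $M_j=M_{i_0},M_{i_1},\dots,M_{i_q},M_1$ of $\gamma_n$. Your fallback ignores the intermediate classes.
\item For fixed $\delta$, each transit between consecutive neighbourhoods lasts at most $T_\delta$, by the unnamed transit-time lemma preceding Lemma \ref{alp0}. So the discount term vanishes in the limit on the transits, and only there.
\item Lemma \ref{alp0} turns these transits into $u_*$-calibrated curves. Lemma \ref{tinf} and Proposition \ref{htlip}, with $\delta\to0$, then give $u_*(\xi_m)-u_*(\eta_{m+1})\geqslant h^\infty(\eta_{m+1},\xi_m)$ with $\xi_m,\eta_m\in M_{i_m}$.
\item The long stays are crossed at no cost by Lemma \ref{h=w}: on a static class, $u_*(\eta_m)-u_*(\xi_m)=h^\infty(\xi_m,\eta_m)$, and likewise $u_*(z)-u_*(\xi_0)=h^\infty(\xi_0,z)$. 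No curve ``replacing'' the stay is needed.
\end{enumerate}

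The paper then propagates $u_*=u_0$ class by class starting from $M_1$, upgrading each inequality to an equality with $u_*\leqslant u_0$ (Lemma \ref{leqA}). In your framework you can instead sum these relations and apply the triangle inequality. Together with Lemma \ref{cononM1}, this gives $u_*(z)\geqslant u_0(\eta_{q+1})+h^\infty(\eta_{q+1},z)\geqslant\bar u(z)$ with $\eta_{q+1}\in M_1$.

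Finally, the statement requires $\bar u=u_0$ on all of $M$, not just on $M_1$. This follows from $u_0\leqslant\bar u$ (Proposition \ref{h>w}(ii), since $u_0$ is a subsolution) and $\bar u=u_*\leqslant u_0$ (Lemma \ref{leqA}).
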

\begin{proof}
We take a converging sequence $(u_{\lambda_n})_n$, whose limit is denoted by $u_*$. We are going to show that $u_*$ coincides with $u_0$ on $\{M_j\}_{j\in\{2,\dots,k\}}$. Once this is established, Proposition \ref{uniqueM} yields $u_*=u_0$.

Let $j\in\{2,\dots,k\}$ and $z\in M_j$. By relabeling the indices if necessary, we may assume that $j=2$. By Lemma \ref{eM1}, there exist sequences $\delta_n\to 0^+$ and $\lambda_n\to 0^+$ such that $\gamma_n:=\gamma^z_{\lambda_n}$ enters $M^{\delta_n}_1$. By Corollary \ref{twice}, up to a subsequence still denoted by $(\lambda_n)_n$, there exist distinct indices $i_0,i_1,\dots,i_q\in\{2,\dots,k\}$ with $i_0=2$ and $0\leqslant q\leqslant k-2$ such that $\gamma_n$ visits $M^{\delta_n}_{i_0},M^{\delta_n}_{i_1},\dots,M^{\delta_n}_{i_q},M^{\delta_n}_{1}$ in order. Moreover, the sequence of indices $(i_1,\dots,i_q)$ is independent of $n$. Assume that $(\delta_n)_n$ is decreasing. Then for any fixed $N$, and all $n>N$, $\gamma_n$ visits $M^{\delta_N}_{2},M^{\delta_N}_{i_1},\dots,M^{\delta_N}_{i_q},M^{\delta_N}_{1}$ in order. We set $i_{q+1}=1$. By Lemma \ref{alp0}, for each $m=0,\dots,q$, there exists a $u_*$-calibrated curve $\alpha^m_N$ which leaves $M^{\delta_N}_{i_m}$ at time $0$ and enters $M^{\delta_N}_{i_{m+1}}$ at time $-t^m_N$. Up to a subsequence, we have $\alpha^m_N(0)\to \xi_m\in M_{i_m}$ and $\alpha^m_N(-t^m_N)\to\eta_{m+1}\in M_{i_{m+1}}$. By Lemma \ref{tinf}, $t^m_N\to+\infty$ as $N\to+\infty$. Therefore,
\begin{align*}
&h^\infty(\eta_{m+1},\xi_m)\leqslant \liminf_{N\to +\infty}h_{t^m_N}(\eta_{m+1},\xi_m)
\\ &\leqslant \liminf_{N\to +\infty}\int_{-t^m_N}^0 [L(\alpha^m_N(s),\dot{\alpha}^m_N(s))+c_0]\, ds+\kappa d(\alpha^m_N(-t^m_n),\eta_{m+1})+\kappa d(\alpha^m_N(0),\xi_m)
\\ &=\liminf_{N\to+\infty}\int_{-t^m_N}^0 [L(\alpha^m_N(s),\dot{\alpha}^m_N(s))+c_0]\, ds
\\ &=\lim_{N\to+\infty}(u_*(\alpha^m_N(0))-u_*(\alpha^m_N(-t^m_N)))=u_*(\xi_m)-u_*(\eta_{m+1}).
\end{align*}
By Lemma \ref{cononM1}, $u_*(\eta_{q+1})=u_0(\eta_{q+1})$. Taking $m=q$ in the above inequality, we obtain
\[u_*(\xi_q)\geqslant u_0(\eta_{q+1})+h^\infty(\eta_{q+1},\xi_q)\geqslant u_0(\eta_{q+1})+u_0(\xi_q)-u_0(\eta_{q+1})=u_0(\xi_q).\]
By Lemma \ref{h=w}, for all $z\in M_{i_q}$, we have
\[u_*(z)-u_*(\xi_q)=u_0(z)-u_0(\xi_q)=h^\infty(\xi_q,z),\]
which implies
\[u_*(z)=u_0(z)-u_0(\xi_q)+u_*(\xi_q)\geqslant u_0(z).\]
Therefore, $u_*\geqslant u_0$ on $M_{i_q}$. By the definition of $u_0$ we obtain that $u_*(z)=u_0(z)$ for all $z\in M_{i_q}$. Repeating this argument, we conclude that $u_*(z)=u_0(z)$ for all $z\in M_2$.
\end{proof}

\begin{lem}\label{u0h}
Let $u_0$ be the uniform limit of $\{u_\lambda\}$ given in Lemma \ref{cononM}. Then
\[u_0(x)=\inf_{\mu}\frac{\int_M a(y)h^\infty(y,x)\, d\mu(y)+A}{\int_M a(y)\, d\mu(y)}.\]
where the infimum is taken among all projected Mather measures $\mu$ supported on $M_1$.

Moreover, for any fixed point $x_1\in M_1$, the limit function $u_0$ can be written in the form
\[h^\infty(x_1,x)+C,\]
where the constant $C$ is given by
\[C=\inf_{\mu}\frac{\int_M a(y)h^\infty(y,x_1)\, d\mu(y)+A}{\int_M a(y)\, d\mu(y)},\]
where the infimum is taken among all projected Mather measures $\mu$ supported on $M_1$.
\end{lem}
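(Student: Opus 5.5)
We will identify $u_0=\sup_{w\in\mathcal S}w$ through the maximal-subsolution characterization of $h^\infty(y,\cdot)$. Call $\Phi(x)$ the right-hand side of the first formula.

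\emph{Step 1: $u_0\leqslant\Phi$.} Fix $w\in\mathcal S$ and a projected Mather measure $\mu$ supported on $M_1$. By Proposition \ref{h>w}(ii), $w(x)\leqslant w(y)+h^\infty(y,x)$ for every $y$. Multiply by $a(y)>0$ (we have $a>\epsilon$ on $M_1\supseteq{\rm supp}\,\mu$) and integrate in $y$ against $\mu$:
\[
w(x)\int a\,d\mu\leqslant \int a w\,d\mu+\int a(y)h^\infty(y,x)\,d\mu(y)\leqslant A+\int a(y)h^\infty(y,x)\,d\mu(y).
\]
Dividing by $\int a\,d\mu>0$ and taking the infimum over $\mu$ gives $w\leqslant\Phi$; taking the supremum over $w$ gives $u_0\leqslant\Phi$.

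\emph{Step 2: $\Phi$ has the form $h^\infty(x_1,\cdot)+C$ and lies in $\mathcal S$.} Every $y\in{\rm supp}\,\mu\subseteq M_1$ satisfies $d_H(y,x_1)=0$. Corollary \ref{h-h=c} then gives $h^\infty(y,x)=h^\infty(y,x_1)+h^\infty(x_1,x)$ for all $x$. Substituting this into $\Phi$,
\[
\frac{\int a(y)h^\infty(y,x)\,d\mu+A}{\int a\,d\mu}=h^\infty(x_1,x)+\frac{\int a(y)h^\infty(y,x_1)\,d\mu+A}{\int a\,d\mu}.
\]
Hence $\Phi(x)=h^\infty(x_1,x)+C$, with $C$ exactly as in the statement. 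The constant $C$ is finite: the quotient is bounded below because $\int a\,d\mu\geqslant\epsilon$ and $h^\infty$ is bounded. It follows that $\Phi$ is a solution, in particular a subsolution, of \eqref{E0} by Proposition \ref{h>w}(iv).

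It remains to show $\Phi\in\mathcal S$, i.e. $\int a\Phi\,d\nu\leqslant A$ for every projected Mather measure $\nu$ on $M_1$. Write
\[
\int a(x)\Phi(x)\,d\nu=\int a(x)h^\infty(x_1,x)\,d\nu+C\int a\,d\nu.
\]
Using the definition of $C$ as an infimum, with $\mu=\nu$ as a competitor, gives
\[
C\int a\,d\nu\leqslant \int a(y)h^\infty(y,x_1)\,d\nu(y)+A.
\]
Therefore
\[
\int a\Phi\,d\nu\leqslant \int a(x)\bigl[h^\infty(x_1,x)+h^\infty(x,x_1)\bigr]\,d\nu(x)+A=A,
\]
since $d_H(x_1,x)=0$ for $x\in M_1$ and $x\mapsto a(x)$ is only integrated over $M_1$. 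Thus $\Phi\in\mathcal S$, and so $\Phi\leqslant u_0$.

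\emph{Conclusion.} Combining the two steps gives $u_0=\Phi$, and Step 2 supplies the representation $h^\infty(x_1,x)+C$. The only delicate point I expect is the $\nu=\mu$ choice in Step 2. It works because the constant $C$ does not depend on $x$, so bounding $C$ by the single term $\mu=\nu$ is legitimate, and the resulting term vanishes by the static-class identity $d_H=0$ on $M_1$. I also need that $\mathcal S$ is the same set that appears in the definition of $u_0$ and in Lemma \ref{cononM}, so that the identification of the limit is consistent. Finally, Theorem \ref{thm1} follows directly, with $x_0=x_1$.
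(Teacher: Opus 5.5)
Your proof is correct, but it reaches the identity by a somewhat different route than the paper.

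For $u_0\leqslant\Phi$, the paper works directly with $u_0$. It uses $h^\infty(y,x)\geqslant u_0(x)-u_0(y)$ together with $\int_M a u_0\,d\mu\leqslant A$, and the latter comes from $u_0$ being the uniform limit of $u_\lambda$ (Lemma \ref{leqA}). You run the same integration for each $w\in\mathcal S$ separately and then take the supremum, so you never need to know that $u_0$ itself satisfies the constraint.

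The reverse inequality is where the routes really differ. The paper uses the competitors $U_y=-h^\infty(\cdot,y)+\hat u_0(y)\in\mathcal S$. These only give $\hat u_0(y)\leqslant u_0(y)$ at points where $h^\infty(y,y)=0$, i.e. on $\mathcal M$. Passing to all of $M$ then tacitly needs an extension argument such as Proposition \ref{uniqueM}. That argument uses that $\hat u_0$ is a solution, which the paper only establishes in the ``moreover'' part.

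You reverse the order. You first prove $\Phi=h^\infty(x_1,\cdot)+C$ via Corollary \ref{h-h=c}, so $\Phi$ is a genuine solution. You then check $\Phi\in\mathcal S$ using $h^\infty(x_1,x)+h^\infty(x,x_1)=0$ on $M_1$, which yields $\Phi\leqslant u_0$ on all of $M$ at once. In fact $\Phi$ coincides with the paper's $U_{x_1}$ on $M_1$ and dominates it elsewhere, by the triangle inequality and $h^\infty(x_1,x_1)=0$. That is exactly why it is the better competitor.

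What your version buys is that it identifies $\sup_{\mathcal S}w$ with $\Phi$ purely from the definition of $\mathcal S$. It does not invoke the limiting procedure or the uniqueness set. The only implicit input, shared with the paper, is that at least one projected Mather measure is supported on $M_1$, so that the infimum defining $C$ is finite.
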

\begin{proof}
Define
\[\hat u_0(x):=\inf_{\mu}\frac{\int_M a(y)h^\infty(y,x)\, d\mu+A}{\int_M a(y)\, d\mu(y)}.\]
where the infimum is taken among all projected Mather measures $\mu$ supported on $M_1$. Since $h^\infty(y,x)\geqslant u_0(x)-u_0(y)$ and $a(y)$ is positive for $y\in M_1$, for all projected Mather measures $\mu$ supported on $M_1$, we have
\begin{align*}
\int_M a(y)h^\infty(y,x)\, d\mu(y)&\geqslant u_0(x)\int_M a(y)\, d\mu(y)-\int_M a(y)u_0(y)\, d\mu(y)
\\ &\geqslant u_0(x)\int_M a(y)\, d\mu(y)-A,
\end{align*}
which implies that $u_0\leqslant \hat u_0$.

On the other hand, for $y\in M$, we define
\[U_y(x):=-h^\infty(x,y)+\hat u_0(y).\]
Then $U_y(x)$ is a subsolution of \eqref{E0} and satisfies
\[\frac{\int_M U_y(x)a(x)\, d\nu(x)-A}{\int_M a(x)\, d\nu(x)}\leqslant 0,\]
where $\nu$ is a projected Mather measure supported on $M_1$. Thus, $U_y\in\mathcal S$, which implies that $U_y(x)\leqslant u_0(x)$. Taking $y\in\mathcal M$, we have $h^\infty(y,y)=0$. Then we obtain that $U_y(y)=\hat u_0(y)\leqslant u_0(y)$ for $y\in\mathcal M$. We then conclude that $\hat u_0\leqslant u_0$ on the whole $M$.

Recall that for each $x\in M$, the map $y\mapsto -h^\infty(y,x)$ is a subsolution of \eqref{E0}. Taking $x_1,y\in M_1$, it follows from Lemma \ref{h=w} that
\[-h^\infty(x_1,x)-(-h^\infty(y,x))=h^\infty(y,x_1).\]
Therefore, for a projected Mather measure $\mu$ supported on $M_1$, we have
\begin{align*}
u_0(x)&=\inf_{\mu}\frac{\int_M a(y)h^\infty(y,x)\, d\mu(y)+A}{\int_M a(y)\, d\mu(y)}
\\ &=\inf_{\mu}\frac{\int_M a(y)(h^\infty(y,x_1)+h^\infty(x_1,x))\, d\mu(y)+A}{\int_M a(y)\, d\mu(y)}
\\ &=\inf_{\mu}\frac{\int_M a(y)h^\infty(y,x_1)\, d\mu(y)+A}{\int_M a(y)\, d\mu(y)}+h^\infty(x_1,x).
\end{align*}
The proof is now complete.
\end{proof}

\section*{Acknowledgements}

Panrui Ni is supported by the National Natural Science Foundation of China (Grant No. 12571197). 
Jun Yan is supported by the National Natural Science Foundation of China (Grant Nos. 12171096, 12231010, 12571197). 
Maxime Zavidovique is supported by ANR CoSyDy (ANR-CE40-0014).

\section*{Declarations}

\noindent {\bf Conflict of interest statement:} The authors state that there is no conflict of interest.

\medskip

\noindent {\bf Data availability statement:} Data sharing not applicable to this article as no datasets were generated or analysed during the current study.

\end{document}